\tikzstyle{vertex}=[
\tikzstyle{printersafe}=[decoration={snake,amplitude=0pt}]
\DeclareFontFamily{U}{wncy}{}
\DeclareFontShape{U}{wncy}{m}{n}{<->wncyr10}{}
\DeclareSymbolFont{mcy}{U}{wncy}{m}{n}
\DeclareMathSymbol{\Sh}{\mathord}{mcy}{"58} 
\newcommand{\Pic}{\operatorname{Pic}}
\newcommand{\codim}{\operatorname{codim}}
\newcommand{\mult}{\operatorname{mult}}
\newcommand{\vol}{\operatorname{vol}}
\renewcommand{\qq}{\mathbb{Q}}
\newcommand{\zz}{\mathbb{Z}}
\newcommand{\nn}{\mathbb{N}}
\def\O#1.{\mathcal {O}_{#1}}			
\def\pr #1.{\mathbb P^{#1}}				
\def\af #1.{\mathbb A^{#1}}			
\def\ses#1.#2.#3.{0\to #1\to #2\to #3 \to 0}	
\def\xrar#1.{\xrightarrow{#1}}			
\def\K#1.{K_{#1}}						
\def\bA#1.{\mathbf{A}_{#1}}			
\def\bM#1.{\mathbf{M}_{#1}}
\def\bN#1.{\mathbf{N}_{#1}}
\def\bL#1.{\mathbf{L}_{#1}}				
\def\bB#1.{\mathbf{B}_{#1}}				
\def\bK#1.{\mathbf{K}_{#1}}			
\def\subs#1.{_{#1}}					
\def\sups#1.{^{#1}}						
\DeclareMathOperator{\coeff}{coeff}	
\DeclareMathOperator{\Supp}{Supp}
\DeclareMathOperator{\lct}{lct}
\newcommand{\rar}{\rightarrow}	
\newcommand{\drar}{\dashrightarrow}	
  \newtheorem{theorem}{Theorem}[section]
  \newtheorem{lemma}[theorem]{Lemma}
  \newtheorem{proposition}[theorem]{Proposition}
  \newtheorem{corollary}[theorem]{Corollary}
\theoremstyle{definition}
  \newtheorem{definition}[theorem]{Definition}
  \newtheorem{example}[theorem]{Example}
\newtheorem{remark}[theorem]{Remark}
\theoremstyle{remark}
\numberwithin{equation}{section}
\begin{document}

\title{On the boundedness of $n$-folds with $\kappa(X)=n-1$}

\author[S.~Filipazzi]{Stefano Filipazzi}
\address{
    EPFL SB MATH CAG,
	MA  C3 625  (B\^atiment MA),
	Station 8,
	CH-1015 Lausanne,
    Switzerland
}
\email{stefano.filipazzi@epfl.ch}

\renewcommand{\subjclassname}{%
	\textup{2020} Mathematics Subject Classification}

\subjclass[2020]{Primary 14E30, 
Secondary 14D06.}
\maketitle

\begin{abstract}
In this note we study certain sufficient conditions for a set of minimal klt pairs $(X,\Delta)$ with $\kappa(X,\Delta)=\dim(X)-1$ to be bounded.
\end{abstract}

\setcounter{tocdepth}{1}

\tableofcontents

\section{Introduction}

Throughout this paper, the base field will be an algebraically closed field of characteristic zero.

One of the main goals of algebraic geometry is to classify varieties according to a few main features.
One natural object attached to any normal variety $X$ is its canonical sheaf $\omega_X$.
A fruitful perspective is to group varieties according to the behavior of the canonical sheaf.
Under this perspective, the Minimal Model Program suggests the existence of three main classes of varieties: Fano varieties, Calabi--Yau varieties, and varieties of general type.
Each of these classes corresponds to one of the following behaviors of the canonical sheaf: $\omega_X^\vee$ induces a birational polarization, $\omega_X$ is torsion, and $\omega_X$ induces a birational polarization, respectively.
Notice that, for the purpose of this work, Calabi--Yau will refer to the torsion property of the (log) canonical divisor, while there will be no assumption on the fundamental group, nor on the smoothness of the variety.
Then, these notions naturally extend to the setup of pairs.

The Minimal Model Program conjectures that every variety decomposes birationally as iterated fibrations, where the base of the tower of fibrations and the general fiber of each fibration belong to one of the three above families.
Thus, a natural point of view is to use the knowledge of the three fundamental classes of varieties to have a deeper understanding of other types of varieties.

In this work, we are concerned with varieties of intermediate Kodaira dimension.
That is, we consider varieties $X$ so that $\Gamma(X,\omega_X^{\otimes n}) \neq 0$ for some $n \in \mathbb N$, but that are not Calabi--Yau nor of general type.
This is equivalent to the following: the rate of growth of the sections of $\Gamma(X,\omega_X^{\otimes n})$ is asymptotically polynomial, with degree $d$ satisfying $1 \leq d \leq \dim(X)-1$.
In this scenario, the Minimal Model Program predicts that $X$ admits a minimal model $X'$ with the following property: a suitable positive power of $\omega_{X'}$ is basepoint-free and defines a morphism $f \colon X' \rar Y$, called the Iitaka fibration.
In this case, the fibers of $f$ are Calabi--Yau varieties and the base $Y$ is naturally endowed with the structure of a pair of general type.

One of the main topics in the classification of algebraic varieties is boundedness.
Loosely speaking, a set of varieties or, more generally, pairs is called bounded if it can be parametrized by a scheme of finite type.
In particular, addressing the boundedness of a class of varieties is the first step towards considering a moduli space.
Hacon, M\textsuperscript{c}Kernan, and Xu proved that pairs $(X,\Delta)$ of general type form a bounded family when one imposes restrictions on $\dim(X)$, the coefficients of $\Delta$, and the divisor $\K X. +\Delta$ is ample of fixed volume \cite{HMX18}.
Similarly, Birkar settled an optimal boundedness statement for varieties of Fano type: varieties of Fano type whose singularities are bounded form a bounded family \cites{Bir19,Bir16}.
On the other hand, in general, Calabi--Yau varieties do not form bounded families.
An example of this phenomenon is given by abelian $n$-folds and K3 surfaces: both classes are not bounded, but each of them decomposes as the countable union of bounded families, corresponding to polarizations of different degrees.
Therefore, it is natural to introduce some geometrical or numerical conditions when studying families of Calabi--Yau varieties.
In this direction, there are some results obtaining (weak versions of) boundedness of Calabi--Yau varieties.
The boundedness of Calabi--Yau varieties admitting elliptic fibrations is considered in \cites{Gro94,dCS17,Bir18,BDCS,FHS}, while some recent works consider fairly singular Calabi--Yau pairs \cites{Bir18,dCS18,HJ}.
Furthermore, works of Jiao consider the problem of the boundedness of varieties admitting a log Calabi--Yau fibration of higher relative dimension \cites{Jiao1,Jiao2}.

In this paper, we investigate the following question.
If the base $Y$ and the general fiber $F$ of the Iitaka fibration $f \colon X' \rar Y$ belong to a bounded family of varieties, can we infer any boundedness statement about $X'$?
If not, what are some natural additional conditions to achieve boundedness?
In this direction, recent work of Li analyzes the case when the Iitaka fibration has fibers of Fano type \cite{Li20}.
Similarly, the results in \cite{Bir18} apply to the study of fibrations of Fano type.

In this work, we are concerned with the case of varieties $X$ of Kodaira dimension $\dim (X) -1$.
More generally, we are interested in minimal klt pairs $(X,\Delta)$, where $\kappa(X,\Delta)=\dim(X)-1$.
In this case, the general fiber of the Iitaka fibration is either an elliptic curve or $\pr 1.$.
While the case when the general fiber is $\pr 1.$ follows from \cite{Bir18}, substantial work is needed to analyze the case of elliptic fibrations.
If the elliptic fibration admits a section, using techniques developed in \cite{dCS17}, one can induce a polarization that bounds the total space of the fibration.
This direction is successfully explored in \cite{FS19}.
On the other hand, an elliptic fibration does not necessarily admit a rational section.
Furthermore, if a set of pairs is bounded by a family $\mathcal{X} \rar T$, one can stratify $T$ so that the relative Iitaka fibration of $\mathcal{X} \rar T$ induces the Iitaka fibration fiberwise.
In particular, a multisection of the relative Iitaka fibration induces, up to a stratification, a multisection of the Iitaka fibration of the fibers.
Thus, if a set of elliptic $n$-folds with $\kappa(X)=n-1$ is bounded, the minimal degree of a multisection is bounded.
Similarly, the bases of the Iitaka fibration have to be bounded too.

Our main result shows that the above constraints, namely on the degree of the multisection and on the boundedness of the bases, are actually sufficient to achieve boundedness in codimension one.
We refer to \S~\ref{hpyerstandard} for the definition of the set of coefficients $\Phi(\mathcal{R})$.

\begin{theorem} \label{thm intro}
Fix positive integers $n$ and $d$, a positive real number $v$, and a finite set of rational numbers $\mathcal{R} \subset \qq \cap [0,1]$.
Let $\mathfrak{D}(n,v,\Phi(\mathcal{R}),d)$ be the set of minimal projective klt pairs $(X,\Delta)$ of dimension $n$ with $\kappa(X,\Delta)=n-1$, $\vol_{n-1}(X,\K X. + \Delta)=v$, $\coeff(\Delta) \subset \Phi(\mathcal{R})$, and whose Iitaka fibration admits a multisection of degree $d$.
Then, $\mathfrak{D}(n,v,\Phi(\mathcal{R}),d)$ is bounded in codimension one.
\end{theorem}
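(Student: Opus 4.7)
The plan is to split the argument according to the type of the general fibre $F$ of the Iitaka fibration $f \colon X \to Y$. Since $(X,\Delta)$ is minimal with $\kappa = n-1$, the general fibre is one-dimensional with $\K X. + \Delta$ numerically trivial on $F$, so $F$ is either $\pr 1.$ or an elliptic curve. In the $\pr 1.$-case, $f$ is a Mori-fibre-type contraction and the boundedness of the total space follows from \cite{Bir18}, as already indicated in the introduction. Henceforth I would focus on the elliptic case, which is where the multisection hypothesis is needed.

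The second step would be to bound the base of $f$. Applying the canonical bundle formula, one writes
\[
\K X. + \Delta \sim_{\qq} f^*(\K Y. + B_Y + M_Y),
\]
where $(Y,B_Y + M_Y)$ is a generalised klt pair of log general type with $\vol_{n-1}(Y,\K Y. + B_Y + M_Y) = v$. The coefficients of $B_Y$ lie in a set that is DCC in $\mathcal{R}$ by the usual adjunction results, and since the general fibre is an elliptic curve, the $\mathbb{Q}$-Cartier index of the moduli part $M_Y$ admits a uniform bound coming from the $j$-line. Then the boundedness of generalised pairs of log general type of fixed volume gives that the bases $(Y,B_Y + M_Y)$ vary in a bounded family.

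Next I would use the multisection to reduce to the situation of an elliptic fibration with a rational section. The multisection of degree $d$ induces a generically finite morphism $Y' \to Y$ of degree at most $d$ such that, after normalisation and resolution, the pulled-back fibration $X' \to Y'$ admits a rational section; the base $Y'$ remains in a bounded family since it dominates the bounded $Y$ by a cover of bounded degree. Equipping $X'$ with the pullback boundary $\Delta'$, I would then invoke (a relative version of) the machinery of \cite{FS19}: the rational section allows one to construct a birational polarisation of bounded volume on a good model of $(X',\Delta')$, which, combined with the bounded base, places $(X',\Delta')$ in a bounded family.

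The hard part is the descent from $X'$ back to $X$. The map $X' \dashrightarrow X$ is a generically finite quotient by a group of order at most $d!$, and in general taking quotients of bounded families does not preserve boundedness on the nose: the equivariant MMP and the ramification in codimension one introduce discrepancies that one cannot control in isomorphism, but only up to isomorphism in codimension one. This is exactly where the conclusion weakens from boundedness to boundedness in codimension one. A secondary technical point, which I expect to require careful bookkeeping, is checking that the canonical bundle formula data, the DCC of the coefficients of $B_Y$, and the boundedness of the moduli $b$-divisor behave well under simultaneous base change in families, so that steps two and three can be carried out uniformly across the set $\mathfrak{D}(n,v,\Phi(\mathcal{R}),d)$ rather than one pair at a time.
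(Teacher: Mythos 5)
Your decomposition into the $\mathbb{P}^1$-case and the elliptic case, and the reduction of the base $(Y,B_Y+\Delta_Y)$ to a bounded family of general-type pairs via the canonical bundle formula and \cite{HMX18}, all match the paper. But the strategy for the elliptic case diverges sharply, and there are two genuine gaps in your version.

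First, you propose to normalize the degree-$d$ multisection to obtain a cover $Y' \to Y$ of degree $d$ over which the pulled-back fibration acquires a rational section, and then assert that ``$Y'$ remains in a bounded family since it dominates the bounded $Y$ by a cover of bounded degree.'' This is false. Generically finite covers of bounded degree of a bounded family are in general not bounded: already for $Y = \mathbb{P}^1$, degree-$2$ covers run through all hyperelliptic curves of arbitrary genus. In the present situation $Y'$ is (birationally) a hypersurface in $X$, so knowing $Y'$ is bounded is essentially as hard as knowing $X$ is bounded -- the reduction is circular. You would need to independently control the ramification of $Y'\to Y$ (equivalently, produce a klt structure of bounded volume on $Y'$), and there is nothing in the hypotheses that gives that directly.

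Second, the descent from $X'$ back to $X$ is not carried out. You observe it is the hard part and say ``this is exactly where the conclusion weakens to boundedness in codimension one,'' but that is a description of the desired outcome, not an argument. Even granting boundedness of $(X',\Delta')$, one must package the finite group action (after passing to a Galois closure of degree dividing $d!$), show the quotient data vary in a bounded family, and compare the quotient to $X$ in a family, none of which is sketched.

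The paper avoids both problems by never leaving the base $Y$. Instead of a cover one passes to the Jacobian fibration $j\colon J(X)\to Y$, which has a rational section over the \emph{same} base; it is bounded in codimension one by the arguments of \cite{dCS17,FS19} (Theorem \ref{bdd jacobian}). The original $X$ is then recovered as a torsion element of the Tate--Shafarevich group $\Sh_{\mathring Y}(J(X)_{\eta_Y})$ over a carefully chosen open set $\mathring Y$, with the order of that element dividing $d!$ precisely because of the degree-$d$ multisection (Lemma \ref{lemma order WC}). Gross's finiteness and constructibility results for $d!$-torsion of the Tate--Shafarevich sheaf in families then produce the birationally bounding family. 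The technical heart, Proposition \ref{bound cycles}, is to bound the complement of $\mathring Y$ -- the locus where the fibration can fail to be smooth or to lie in the Tate--Shafarevich group -- uniformly over the family; this is the step that has no analogue in your outline and is the real work of the proof.
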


\begin{remark}
The notation $\vol_{n-1}(X,\K X. + \Delta)=v$ is defined in Definition \ref{def volume}.
It is the volume of the pair of general type naturally induced on the base of the Iitaka fibration.
\end{remark}

\begin{remark}
The varieties in the statement of Theorem \ref{thm intro} are bounded together with their Iitaka fibrations.
This is made precise by the more technical statements Theorem \ref{theorem fano case} and Theorem \ref{main thm}.
\end{remark}

\begin{remark}
If the general fiber of the Iitaka fibration is $\pr 1.$, the statement of Theorem \ref{thm intro} is stronger: boundedness (not just in codimension one) holds, and the requirement on the degree of the multisection is unnecessary (this is consistent with the fact that a $\pr 1.$-fibration always has a multisection of degree 2).
See Theorem~\ref{theorem fano case}.
\end{remark}

As mentioned above, the main case of Theorem \ref{thm intro} is when the Iitaka fibration $f \colon X \rar Y$ is an elliptic fibration.
In this case, we can consider an auxiliary fibration, the Jacobian fibration $j \colon J(X) \rar Y$, whose generic fiber is the Jacobian of the generic fiber of $f$.
In particular, $j$ admits a rational section.
Thus, one can argue as in \cites{dCS17,FS19}, and conclude that the set of Jacobian fibrations is bounded in codimension one.
To retrieve the original fibration, we make use of tools developed by Dolgachev and Gross \cites{DG94,Gro94}.
More precisely, we consider the geometric Tate--Shafarevich group of the Jacobian fibration.
This group parametrizes certain elliptic fibrations over $Y$ whose Jacobian fibration is $j$.
Gross showed that the torsion parts of this group behave well in family:
Roughly speaking, for every $d$, the $d$-torsion elements of this group form a finite set, which is constant in the \'etale topology under small deformations.
In particular, one could hope to retrieve the whole family of fibrations from the family of Jacobians.
On the other hand, the fibrations parametrized by this group have very restrictive geometric properties, which our fibrations only satisfy over some open set of the base.
The main technical statement of this work is to bound the complement of this subset, in order to show that it deforms along with the family of Jacobian fibrations.
This is the content of Proposition \ref{bound cycles}.

In a similar flavor as \cite{HMX18}, Theorem \ref{thm intro} is concerned with minimal models.
Thus, it is interesting to ask whether any klt pair $(X,\Delta)$ with $\kappa(X,\Delta)=n-1$ has a minimal model to which Theorem \ref{thm intro} applies.
The Minimal Model Program predicts that it should be the case.
More precisely, every klt pair of non-negative Kodaira dimension is expected to admit a good minimal model.
In general, this statement is known just in some cases, for instance in the case of varieties of general type.
In this work, we show the existence of good minimal models when the fibers of the Iitaka fibration have dimension up to 3.

\begin{theorem} \label{good mm}
Let $(X,\Delta)$ be a projective klt pair of dimension $n$ with rational coefficients with $\kappa(X,\Delta) \geq 0$.
Assume that $\kappa(X,\Delta) \geq n-3$.
Then, $(X,\Delta)$ has a good minimal model.
\end{theorem}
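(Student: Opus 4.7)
The plan is to exploit the Iitaka fibration and reduce the existence of a good minimal model for $(X,\Delta)$ to low-dimensional minimal model theory, where all the necessary ingredients are known.

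First, I would pass to a log resolution of $(X,\Delta)$ on which the Iitaka map becomes a morphism $f\colon X' \to Y$. Writing $\Delta'$ for the sum of the strict transform of $\Delta$ and the reduced exceptional divisor (or a suitable klt crepant model of it), the existence of a good minimal model of $(X',\Delta')$ is equivalent to the existence of one for $(X,\Delta)$, since crepant birational klt pairs share this property. By construction, the general fiber $(F,\Delta'|_F)$ of $f$ is a klt pair of dimension $n-\kappa(X,\Delta)\leq 3$ with $\kappa(F,\Delta'|_F)=0$.

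Second, I would invoke the existence of good minimal models for klt pairs of dimension at most three, with non-negative Kodaira dimension and rational coefficients. In dimensions zero, one, and two this is classical; in dimension three, it follows from the three-dimensional log minimal model program together with the abundance theorem for threefolds of non-negative Kodaira dimension, due to Miyaoka and Kawamata. Hence $(F,\Delta'|_F)$ admits a good minimal model.

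Third, I would apply an extension theorem of Lai, with log generalizations due to Hacon--Xu and Hashizume: if a klt pair $(X',\Delta')$ has $\kappa(X',\Delta')\geq 0$ and the general fiber of its Iitaka fibration admits a good minimal model, then $(X',\Delta')$ itself admits one. Combined with the first step, this yields the desired good minimal model of $(X,\Delta)$. The main technical obstacle is this third step: one must verify the precise log-pair formulation of Lai's result under the hypotheses at hand, ensuring in particular that the output is a good minimal model of the \emph{total} pair rather than merely one over the Iitaka base. The rationality of the coefficients of $\Delta$ keeps the argument inside the standard MMP framework and lets one avoid the extra subtleties that arise for real boundaries.
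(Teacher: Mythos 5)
Your high-level strategy is the same as the paper's: resolve indeterminacies of the Iitaka map, observe that the general fiber of $f\colon X'\to Y$ is a klt pair of dimension at most $3$ and Kodaira dimension $0$, invoke low-dimensional abundance (KMM94 for threefolds) to get a good minimal model of the general fiber, and then "extend" to conclude for the total space. But the third step, which you yourself flag as the main technical obstacle, is not a black box in the paper --- it \emph{is} the proof. The paper does not cite a ready-made log version of Lai's theorem; it proves the needed extension directly. Concretely: from Hacon--Xu (HX13, Theorem~1.1, together with deformation invariance HMX18b, Theorem~1.9.1 applied to the fibers), one gets a \emph{relative} good minimal model $(X'',\Delta''+(1-c)E'')$ over $Y$ with ample model $Y''\to Y$. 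This is only semi-ampleness over the base. To pass to global semi-ampleness one applies the canonical bundle formula to $(X'',\Delta''+(1-c)E'')\to Y''$, producing a generalized klt pair $(Y'',B'',\mathbf{M})$ of general type; by BZ16 (proof of 4.4(2)) the $(K_{Y''}+B''+\mathbf{M}_{Y''})$-MMP with scaling terminates with a good minimal model $Y^m$, and by HX13 (Corollary~2.13) this base MMP lifts step by step to a birational contraction $X''\dashrightarrow X^m$, so that $K_{X^m}+\Delta^m+(1-c)E^m$ is the pullback of a semi-ample divisor. A further argument then shows $E^m=0$ and that $X\dashrightarrow X^m$ is $(K_X+\Delta)$-negative.

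Two more specific points. First, the citations you invoke (Lai11; Hacon--Xu; Hashizume) do not obviously cover the precise statement "klt pair with $\kappa\geq 0$, general fiber of the Iitaka fibration has a good minimal model $\Rightarrow$ total space has a good minimal model" --- Lai's original Theorem~4.4 is for terminal varieties without boundary, and the Hacon--Xu result is relative, so the descent to the base via the canonical bundle formula is genuinely required and is not found wholesale in those references. Second, your phrase "reduced exceptional divisor (or a suitable klt crepant model of it)" glosses over a necessary care: the paper takes coefficient $(1-c)$ on the exceptional divisor with $0<c<\mathrm{mld}(X,\Delta)$, which is exactly what ensures that $(X',\Delta'+(1-c)E')$ is klt, has the same log canonical ring as $(X,\Delta)$, and that each component of $E'$ lies in the stable base locus --- the last fact is what forces $E^m=0$ and lets one conclude that the resulting contraction is a good minimal model of $(X,\Delta)$ itself and not merely of the auxiliary log-smooth pair.
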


Theorem \ref{good mm} is a generalization of ideas of Lai \cite{Lai11}, who considered terminal varieties with no boundary.
The specific statement of Theorem \ref{good mm} has already been observed in the case of smooth varieties without boundary \cite{HS20}*{Theorem 2.1}.
Also, the strategy of the proof generalizes the approach in \cite{GW19}, where the case of elliptic fibrations is considered.
More precisely, we first consider a resolution of indeterminacies of the Iitaka fibration.
Then, by the canonical bundle formula, we can induce the structure of a pair of general type on the base of the Iitaka fibration.
On the one hand, by \cite{BCHM}, we can run an MMP on the base of the Iitaka fibration, which terminates with a good minimal model $Y^m$.
On the other hand, since the abundance conjecture is known in dimension up to 3, we can apply the results of \cite{HX13}, which guarantee that our variety $X$ has a relative good minimal model over the base.
Combining these two facts, the proof is completed.

\subsection*{Acknowledgements} 
The author would like to thank Gabriele Di Cerbo and Roberto Svaldi for many fruitful discussions on this work.
He would like to thank Stefan Patrikis for explaining the basics of Tate--Shafarevich groups.
He would like to thank Christopher Hacon and Burt Totaro for helpful comments and feedback, and Javier Carvajal-Rojas, Mark Gross and J\'anos Koll\'ar for answering his questions.
Finally, he would like to thank the anonymous referee for the thorough feedback and the many suggestions to improve this work.

\section{Preliminaries}

\subsection{Conventions} Throughout this paper, the base field will be an algebraically closed field of characteristic zero.

\subsection{Contractions} In this paper a {\em contraction} is a projective morphism of quasi-projective varieties $f  \colon  X \rar Z$ with $f_* \O X. = \O Z.$. Notice that, if $X$ is normal, then so is $Z$.
An \emph{elliptic fibration} is a contraction whose general fiber is a smooth elliptic curve.

\subsection{Hyperstandard sets}\label{hpyerstandard}

Let $\mathcal R$ be a subset of $[0,1]$.
Then, we define the \emph{set of hyperstandard multiplicities} associated to $\mathcal R$ as
\[
\Phi (\mathcal R) \coloneqq \bigg \lbrace \left. 1- \frac{r}{m}  \right| r \in \mathcal R, m \in \nn \bigg \rbrace.
\]
When $\mathcal R = \lbrace 0,1 \rbrace$, we call it the set of \emph{standard multiplicities}.
Usually, with no mention, we assume $0,1 \in \mathcal R$, so that $\Phi (\lbrace 0,1 \rbrace ) \subset \Phi (\mathcal R)$.
Furthermore, if $1-r \in \mathcal R$ for every $r \in R$, we have that $\mathcal R \subset \Phi (\mathcal R)$.
Now, assume that $\mathcal{R} \subset [0,1]$ is a finite set of rational numbers.
Then, $\Phi(\mathcal R)$ is a set of rational numbers satisfying the \emph{descending chain condition} (\emph{DCC} in short) whose only accumulation point is 1.

\subsection{Divisors} Let $X$ be a normal quasi-projective variety. We say that $D$ is a divisor on $X$ if it is a $\qq$-Weil divisor, i.e., $D$ is a finite sum of prime divisors on $X$ with coefficients in $\qq$.
The \emph{support} of a divisor $D=\sum_{i=1}^n d_iP_i$ is the union of the prime divisors appearing in the formal sum $\mathrm{Supp}(D)= \sum_{i=1}^n P_i$.
Let $f  \colon  X \rar Z$ be a projective morphism of quasi-projective varieties.
Given a divisor $D = \sum d_i P_i$ on $X$, we define
\[
D^v \coloneqq  \sum_{f(D_i) \subsetneq Z} d_i P_i, \; 
D^h \coloneqq  \sum_{f(D_i) = Z} d_i P_i.
\]
We call $D^v$ and $D^h$ the \emph{vertical part} and \emph{horizontal part} of $D$, respectively.
Let $D_1$ and $D_2$ be divisors on $X$.
We write $D_1 \sim_Z D_2$ (respectively $D_1 \sim \subs \qq,Z . D_2$) if there is a Cartier (respectively $\qq$-Cartier) divisor $L$ on $Z$ such that $D_1 - D_2 \sim f^*L$ (respectively $D_1 - D_2 \sim_\qq f^*L$).
Equivalently, we may also write $D_1 \sim D_2$ over $Z$.
The case of $\qq$-linear equivalence is denoted similarly.

\subsection{Pairs}

A \emph{sub-pair} $(X,B)$ is the datum of a normal quasi-projective variety and a divisor $B$ such that $\K X. + B$ is $\qq$-Cartier.
If $B \leq \Supp(B)$, we say that $B$ is a \emph{sub-boundary}, and if in addition $B \geq 0$, we call it \emph{boundary}.
A sub-pair $(X,B)$ is called a \emph{pair} if $B \geq 0$.
A sub-pair $(X,B)$ is \emph{simple normal crossing} (or \emph{log smooth}) if $X$ is smooth, every irreducible component of $\Supp(B)$ is smooth, and \'etale locally $\Supp(B) \subset X$ is isomorphic to the intersection of $r \leq n$ coordinate hyperplanes in $\mathbb{A}^n$.
A \emph{log resolution} of a sub-pair $(X,B)$ is a birational contraction $\pi \colon X' \rar X$ such that $\mathrm{Ex}(\pi)$ is a divisor and $(X',\pi \sups -1._* (\Supp(B)) + \mathrm{Ex}(\pi))$ is log smooth.
Here, $\mathrm{Ex}(\pi) \subset X'$ is the \emph{exceptional set} of $\pi$, i.e., the reduced subscheme of $X'$ consisting of the points where $\pi$ is not an isomoprhism.
If $(X,B)$ is a sub-pair and $f \colon X \rar U$ is a morphism, we say that $(X,B)$ is \emph{log smooth over $U$} if $(X,B)$ is simple normal crossing, and every stratum of $(X,\Supp(B))$, including $X$ itself, is smooth over $U$.

Let $(X_1,B_1)$ and $(X_2,B_2)$ be two pairs.
We say that they are \emph{crepant} to each other if there exist a normal variety $Y$ and birational morphisms $p \colon Y \rar X_1$ and $q \colon Y \rar X_2$ so that $p^*(\K X_1.+B_1)=q^*(\K X_2.+B_2)$.

Let $(X,B)$ be a sub-pair, and let $\pi \colon X' \rar X$ be a birational contraction from a normal variety $X'$.
Then, we can define a sub-pair $(X',B')$ on $X'$ via the identity
\[
\K X'. + B' = \pi^*(\K X. + B),
\]
where we assume that $\pi_* (\K X'.) = \K X.$.
We call $(X',B')$ the \emph{log pull-back} or \emph{trace} of $(X,B)$ on $X'$.
The \emph{log discrepancy} of a prime divisor $E$ on $X'$ with respect to $(X,B)$ is defined as $a_E(X,B) \coloneqq 1 - \mult_E (B')$.
Let $\epsilon$ be a non-negative number.
We say that a sub-pair $(X,B)$ is \emph{$\epsilon$-sub-log canonical} (resp. \emph{$\epsilon$-sub-klt}) if $a_E(X,B) \geq \epsilon$ (resp. $a_E(X,B) > \epsilon$) for every $\pi$ and every $E$ as above.
If $\epsilon=0$, we drop it from the notation.
When $(X,B)$ is a pair, we say that $(X,B)$ is \emph{$\epsilon$-log canonical} or \emph{$\epsilon$-klt}, respectively.
Notice that, if $(X,B)$ is log canonical (resp. klt), we have $\coeff(B) \subset [0,1]$ (resp. $\coeff(B) \subset [0,1)$).

Given a sub-pair $(X,B)$ and an effective $\qq$-Cartier divisor $D$, we define the \emph{log canonical threshold} of $D$ with respect to $(X,B)$ as
\[
\lct (X,B;D) \coloneqq \sup \lbrace t \geq 0 | (X,B+tD) \text{ is sub-log canonical} \rbrace.
\]

\subsection{B-divisors}
Let $X$ be a normal variety, and consider the set of all proper birational morphisms $\pi \colon  X_\pi \rightarrow X$, where $X_\pi$ is normal.
This is a partially ordered set, where $\pi' \geq \pi$ if $\pi'$ factors through $\pi$.
We define the space of {\it Weil b-divisors} as the inverse limit
\begin{equation*}
\mathbf{Div}(X)\coloneqq \varprojlim_\pi \mathrm{Div}(X_\pi),
\end{equation*}
where $\mathrm{Div}(X_\pi)$ denotes the space of Weil divisors on $X_\pi$.
Then, we define the space of {\it $\qq$-Weil b-divisors} $\mathbf{Div}_\qq(X) \coloneqq \mathbf{Div}(X) \otimes \qq$.
In the following, by b-divisor, we will mean a $\qq$-Weil b-divisor.
Equivalently, a b-divisor $\mathbf{D}$ can be described as a (possibly infinite) sum of geometric valuations $V_i$ of $k(X)$ with coefficients in $\qq$,
\[
 \mathbf{D}= \sum_{i \in I} b_i V_i, \; b_i \in \mathbb{\qq},
\]
such that for every normal variety $X'$ birational to $X$, only a finite number of the $V_i$ can be realized by divisors on $X'$. 
The {\it trace} $\mathbf{D}_{X'}$ of $\mathbf{D}$ on $X'$ is defined as 
\[
\mathbf{D}_{X'} \coloneqq \sum_{
\{i \in I \; | \; c_{X'}(V_i)= D_i, \; 
\codim_{X'} (D_i)=1\}} b_i D_i,
\]
where $c_{X'}(V_i)$ denotes the center of the valuation on $X'$.

Given a b-divisor $\mathbf{D}$ over $X$, we say that $\mathbf{D}$ is a {\it b-$\qq$-Cartier} b-divisor if there exists a birational model $X'$ of $X$ such that $\mathbf{D}_{X'}$ is $\qq$-Cartier on $X'$, and for any model $r \colon X''  \rar X'$, we have $\mathbf{D}_{X''} = r^\ast \mathbf{D}_{X'}$.
When this is the case, we will say that $\mathbf{D}$ descends to $X'$ and write $\mathbf{D}= \overline{\mathbf{D}_{X'}}$.
We say that $\mathbf{D}$ is {\it b-effective}, if $\mathbf{D}_{X'}$ is effective for any model $X'$.
We say that $\mathbf{D}$ is {\it b-nef}, if it is b-$\qq$-Cartier and, moreover, there exists a model $X'$ of $X$ such that $\mathbf{D}= \overline{\mathbf{D}_{X'}}$ and $\mathbf{D}_{X'}$ is nef on $X'$. 
The notion of b-nef b-divisor can be extended analogously to the relative case.

\begin{example}
\label{discr.div.ex}
Let $(X, B)$ be a sub-pair.
The \emph{discrepancy b-divisor} $\mathbf{A}(X,B)$ is defined as follows: on a birational model $\pi \colon X' \rar X$, its trace $\mathbf{A}(X,B)_{X'}$ is given by the identity  $\K X'. = \pi^*(\K X. + B) + \mathbf{A}(X,B)_{X'}$.
Then, the b-divisor $\mathbf{A}^*(X,B)$ is defined taking its trace $\mathbf{A}^*(X,B)_{X'}$ on $X'$ to be $\mathbf{A}(X,B)_{X'} \coloneqq \sum \subs a_i > -1. a_i D_i$, where $\mathbf{A}(X,B)_{X'} = \sum_i a_i D_i$.
\end{example}

\subsection{Generalized pairs}
A {\em generalized sub-pair} $(X,B, \mathbf{M})/Z$ over $Z$  is the datum of:
\begin{itemize}
\item a normal variety admitting a projective morphism $X  \rar Z$;
\item a divisor $B$ on $X$;
\item a b-$\qq$-Cartier b-divisor $\mathbf{M}$ over $X$ which descends to a nef$/Z$ $\qq$-Cartier divisor $\mathbf{M}_{X'}$ on some birational model $X' \rightarrow X$.
\end{itemize}
Moreover, we require that $K_X +B+ \mathbf{M}_X$ is $\qq$-Cartier.
If $B$ is effective, we say that $(X,B, \mathbf M)/Z$ is a \emph{generalized pair}.
The divisor $B$ is called the \emph{boundary part} of $(X,B, \mathbf M)/Z$, and $\mathbf M$ is called the \emph{moduli part}.
In the definition, we can replace $X'$ with a higher birational model $X''$ and $\mathbf{M}_{X'}$ with $\mathbf{M}_{X''}$ without changing the generalized pair.
Whenever $\mathbf{M}_{X''}$ descends on $X''$, then the datum of the rational map $X'' \drar X$, $B$, and $\mathbf{M}_{X''}$ encodes all the information of the generalized pair.

Let $(X,B, \mathbf{M})/Z$ be a generalized sub-pair and $\rho\colon Y \rar X$ a projective birational morphism. 
Then, we may write
\[
\K Y.+B_Y + \mathbf{M}_{Y}=\pi^*(K_X+B+\mathbf M _X).
\]
Given a prime divisor $E$ on $Y$, we define the {\em generalized log discrepancy} of $E$ with respect to $(X,B, \mathbf M)/Z$  to be $a_E(X,B,\mathbf M)\coloneqq 1-\mult_{E}(B')$.
If $a_E(X,B,\mathbf M) \geq 0$ for all divisors $E$ over $X$, we say that $(X,B, \mathbf M)/Z$ is \emph{generalized sub-log canonical}.
Similarly, if $a_E(X,B+M) > 0$ for all divisors $E$ over $X$ and $\lfloor B \rfloor \leq 0$, we say that $(X,B, \mathbf M)/Z$ is \emph{generalized sub-klt}.
When $B \geq 0$, we say that $(X,B, \mathbf M)/Z$ is \emph{generalized log canonical} or \emph{generalized klt}, respectively.

\subsection{Canonical bundle formula}\label{section_cbf}

We recall the statement of the {\em canonical bundle formula}. 
We refer to \cite{FG14} for the notation involved and a more detailed discussion about the topic.
Let $(X, B)$ be a sub-pair.
A contraction $f \colon X \rar T$ is an \emph{lc-trivial fibration} if
\begin{itemize}
    \item[(i)] $(X,B)$ is a sub-pair that is sub-log canonical over the generic point of $T$;
    \item[(ii)] $\mathrm{rank} f_* \O X. (\lceil \mathbf{A}^*(X,B)\rceil)=1$, where $\mathbf{A}^*(X,B)$ is the b-divisor defined in Example \ref{discr.div.ex}; and
    \item[(iii)] there exists a $\qq$-Cartier divisor $L_T$ on $T$ such that $\K X. + B \sim_\qq f^* L_T$.
\end{itemize}
Condition (ii) above is automatically satisfied if $B$ is effective over the generic point of $T$.
Given a sub-pair $(X,B)$ and an lc-trivail fibration $f \colon X \rar T$, there exist b-divisors $\mathbf{B}$ and $\mathbf{M}$ over $T$ such that the following linear equivalence relation, known as the {\it canonical bundle formula}, holds
\begin{equation}
    \label{cbf.eqn}
    K_X+B \sim_{\mathbb{Q}} f^\ast(K_T+\mathbf{B}_{T}+\mathbf{M}_{T}).
\end{equation}
For a prime divisor $P \subset T$, its coefficient in $\mathbf{B}_T$ is given by the formula $1-\mathrm{lct}_{\eta_P}(X,B;f^*P)$, where the symbol $\mathrm{lct}_{\eta_P}$ denotes the log canonical threshold over the generic point $\eta_P$ of $P$.
Then, we set $\mathbf{M}_T \coloneqq L_T-(K_T+\mathbf{B}_T)$.
If $f' \colon X' \rar T'$ is a higher model of $f$ with morphisms $\phi \colon X' \rightarrow X$ and $\psi \colon T' \rightarrow T$, one repeats this algorithm with $(X',B')$ and $L_{T'}$, where $K_{X'}+B'=\phi^*(K_X+B)$ and $L_{T'}=\psi^*L_T$.
We refer to \cite{PS09}*{\S~7} for more details.

The b-divisor $\mathbf{B}$ is often called the \emph{boundary part} in the canonical bundle formula; it is a canonically defined b-divisor.
Furthermore, if $B$ is effective, then so is $\mathbf{B}_T$.
The b-divisor $\mathbf{M}$, in turn, is often called the \emph{moduli part} in the canonical bundle formula, and it is in general defined only up to $\mathbb{Q}$-linear equivalence. 
The linear equivalence in \eqref{cbf.eqn} holds at the level of b-divisors: namely, 
\[
\overline{(K_X+B)} \sim_\qq f^*(\mathbf{K}+\mathbf{B}+\mathbf{M}),
\] 
where $\mathbf{K}$ denotes the canonical b-divisor of $T$.
Let $I$ be a positive integer such that $I(\K X. + B) \sim 0$ along the generic fiber of $f$.
Then, by \cite{PS09}*{Construction 7.5}, we may choose $\mathbf M$ in its $\qq$-linear equivalence class so that
\begin{equation*}
I\overline{(K_X+B)} \sim If^*(\mathbf{K}+\mathbf{B}+\mathbf{M}).
\end{equation*} 

The moduli b-divisor $\mathbf{M}$ is expected to detect the variation of the fibers of the morphism $f$.
In this direction, we have the following statement.

\begin{theorem}
\cite{FG14}*{cf. Theorem 3.6} 
\label{classic cbf}
Let $f \colon (X,B) \rar T$ be an lc-trivial fibration and let $\pi \colon T \rar S$ be a projective morphism.
Let $\mathbf{B}$ and $\mathbf{M}$ be the b-divisors that give the boundary and the moduli part, respectively.
Then, $\mathbf K + \mathbf B$ and $\mathbf M$ are b-$\qq$-Cartier b-divisors.
Furthermore, $\mathbf{M}$ is b-nef over $S$.
\end{theorem}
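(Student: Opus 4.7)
I would prove the two assertions separately: b-$\qq$-Cartierness of $\mathbf{K}+\mathbf{B}$ and $\mathbf{M}$ is a formal consequence of the construction, while b-nefness of $\mathbf{M}$ over $S$ is the substantive content and requires Hodge-theoretic input.

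\textbf{Step 1: Reduction to a log smooth model.} First, I would replace $f\colon (X,B)\rar T$ by a log resolution. More precisely, by passing to a suitable birational model $X'\rar X$ and a birational modification $T'\rar T$, I may assume that both $X'$ and $T'$ are smooth, that the induced map $f'\colon X'\rar T'$ is a log smooth morphism over the generic point of $T'$, and that $\Supp(B')\cup f'^{-1}(\Sigma)$ is a simple normal crossing divisor for a suitable discriminant divisor $\Sigma\subset T'$. The b-divisors $\mathbf{B}$ and $\mathbf{M}$ are canonically defined (with $\mathbf{M}$ defined up to $\qq$-linear equivalence), so working on a higher birational model does not change the statement.

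\textbf{Step 2: b-$\qq$-Cartierness.} On any sufficiently high model $T''$ of $T$, the trace $\mathbf{B}_{T''}$ is determined divisor by divisor by the log canonical threshold formula: for each prime $D\subset T''$, the coefficient of $D$ in $\mathbf{B}_{T''}$ is $1 - \lct_{\eta_D}(X,B-f^*D;f^*D)$. Standard compatibility of log canonical thresholds with pullback under birational modifications of $T$ shows that $\mathbf{B}$ descends to a fixed model and $\overline{\mathbf{B}_{T''}}=\mathbf{B}$ on all higher models, so $\mathbf{K}+\mathbf{B}$ is b-$\qq$-Cartier. Since $\K X.+B$ itself is $\qq$-Cartier on $X$ and satisfies $\overline{(K_X+B)}\sim_\qq f^*(\mathbf{K}+\mathbf{B}+\mathbf{M})$, subtracting shows $\mathbf{M}$ is b-$\qq$-Cartier as well.

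\textbf{Step 3: b-nefness of $\mathbf{M}$ over $S$.} This is the main obstacle, and it requires Hodge-theoretic semipositivity. My strategy is to perform a suitable base change $\tau\colon\widetilde{T}\rar T'$ (a Kawamata-type covering together with semistable reduction applied to a resolution of $X'\times_{T'}\widetilde{T}$) so that the pullback lc-trivial fibration $\widetilde{f}\colon (\widetilde{X},\widetilde{B})\rar \widetilde{T}$ has unipotent monodromy along the boundary and so that the moduli b-divisor pulls back: $\tau^*\mathbf{M}=\mathbf{M}_{\widetilde{f}}$ up to a correction supported on the branch locus that is absorbed into $\widetilde{B}$. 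After this, Kawamata's semipositivity theorem for the variation of Hodge structure on $R^q\widetilde{f}_*\omega_{\widetilde{X}/\widetilde{T}}$ (in the log canonical formulation due to Fujino-Mori and Ambro) shows that $\mathbf{M}_{\widetilde{f}}$ descends to a nef $\qq$-Cartier divisor on a smooth model of $\widetilde{T}$, proper over $S$. Finally, since $\tau$ is finite and surjective and $\tau^*\mathbf{M}$ is nef over $S$, $\mathbf{M}$ itself is nef over $S$ on a common refinement: any curve $C$ in a fibre over $S$ lifts to a curve $\widetilde{C}$ with $\mathbf{M}\cdot C = \tfrac{1}{\deg\tau}\tau^*\mathbf{M}\cdot\widetilde{C}\geq 0$.

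The only genuinely hard step is Step 3, and the difficulty is entirely Hodge-theoretic: one must carefully identify $\mathbf{M}$ (or a multiple of it) with the $\qq$-divisor governing the determinant of a Hodge bundle on a suitable cover with unipotent monodromy, which is where Kawamata's semipositivity theorem applies. The rest of the argument is formal bookkeeping about b-divisors and lc-trivial fibrations.
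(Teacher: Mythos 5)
The paper does not prove this statement: it is quoted verbatim as a black box from \cite{FG14}*{Theorem 3.6}, so there is no in-paper argument to compare against. Your sketch, however, does follow the standard route through Kawamata, Ambro, Fujino--Mori and Fujino--Gongyo, namely: pass to a prepared model, read off the boundary b-divisor via log canonical thresholds at codimension-one points of the base, and prove semipositivity of the moduli part by a unipotent base change plus Hodge-theoretic semipositivity. As an outline of that literature proof it is correct.

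Two local points in Step 2 need fixing. First, the coefficient of a prime divisor $D$ on a model $T''$ in the discriminant is $1-\lct_{\eta_D}(X,B;f^*D)$, not $1-\lct_{\eta_D}(X,B-f^*D;f^*D)$: the extra $-f^*D$ shifts the threshold by $1$ and produces the wrong number. Second, the claim that b-$\qq$-Cartierness of $\mathbf M$ follows from that of $\mathbf K+\mathbf B$ ``by subtraction'' is not formal, since $f^*$ relates divisors on the total space to divisors on the base and is not invertible. What one actually needs is the preparedness lemma: on a model where the discriminant is SNC and the fibration is suitably prepared, the trace of $\mathbf B$ (and hence of $\mathbf M$, via the choice of $L_{T'}$ with $K_X+B\sim_\qq f^*L_{T'}$) on any higher model equals the pullback. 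This is where the specific structure of lc-trivial fibrations is used, not merely compatibility of $\lct$ with birational pullback. Your Step 3 is the genuine content and you have correctly isolated the key lemma — that after a Kawamata covering (together with semistable reduction) the moduli b-divisor pulls back, so that Kawamata's semipositivity of the Hodge-metric curvature applies and then descends through the finite cover — but in this step too, the statement ``$\tau^*\mathbf M=\mathbf M_{\widetilde f}$ up to a correction absorbed into $\widetilde B$'' is precisely the nontrivial base-change compatibility proved by Ambro, and it is worth signalling that this is a theorem rather than bookkeeping.
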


\begin{remark}
In the setup of Theorem \ref{classic cbf}, let $T'$ be a model where the nef part $\mathbf M$ descends in the sense of b-divisors.
Then, $\mathbf M \subs T'.$ is nef over $S$. 
In particular, $(T, \bB T.,\mathbf{M})/S$ is a generalized sub-pair.
\end{remark}

\begin{remark}
In the setup of Theorem \ref{classic cbf}, the b-divisor $\bM.$ is expected to be b-semi-ample.
Furthermore, it is expected that $C\bM.$ is b-free, where $C \in \mathbb Z \subs \geq 1.$ only depends on $\dim(X)$ and the coefficients of $B$ \cite{PS09}*{Conjecture 7.13}.
These facts are known if $\dim(T)=\dim(X)-1$, and we will constantly make use of them in this work \cite{PS09}*{Theorem 8.1}.
\end{remark}

\subsection{Boundedness}\label{sec_bddness}
Here we recall the notion of boundedness for a set of pairs, and we introduce a suitable notion of boundedness for fibrations.

\begin{definition}
Let $\mathfrak{D}$ be a set of projective pairs.
Then, we say that $\mathfrak{D}$ is bounded (resp. birationally bounded) if there exist a pair $(\mathcal{X},\mathcal{B})$, where $\mathcal{B}$ is reduced, and a projective morphism $\pi \colon \mathcal{X} \rar T$, where $T$ is of finite type, such that for every $(X,B) \in \mathfrak{D}$ there are a closed point $t \in T$ and a morphism (resp. a birational map) $f_t \colon \mathcal{X}_t \rar X$ inducing an isomorphism $(X,\Supp(B)) \cong (\mathcal{X}_t,\mathcal{B}_t)$ (resp. such that $\Supp(\mathcal B _t)$ contains the strict transform of $\Supp(B)$ and all the $f_t$ exceptional divisors).
If a set of pairs is birationally bounded and the maps $f_t$ and $f_t \sups -1.$ are isomorphisms in codimension 1, we say that $\mathfrak{D}$ is bounded in codimension 1.
\end{definition}

\begin{definition}\label{def_bdd_fibrations}
Let $\mathfrak{F}$ be a set of fibrations between projective pairs $\phi \colon (X,B) \rar (Y,D)$.
We say that $\mathfrak{F}$ is bounded (resp. birationally bounded) if there exist pairs $(\mathcal{X},\mathcal{B})$, $(\mathcal{Y},\mathcal{D})$, where $\mathcal{B}$ and $\mathcal{D}$ are reduced, a variety of finite type $T$, and projective morphisms
\begin{center}
\begin{tikzcd}
\mathcal{X} \arrow[rd, "\pi"'] \arrow[rr, "\sigma"] &   & \mathcal{Y} \arrow[ld, "\rho"] \\
                                                    & T &                               
\end{tikzcd}
\end{center}
such that
\begin{itemize}
    \item[(i)] the above diagram is commutative; i.e., we have $\pi=\rho \circ \sigma$;
    \item[(ii)] for every $(X,B) \rar (Y,D) \in \mathfrak{F}$, there is a closed point $t \in T$ and morphisms (resp. birational maps) $f_t \colon \mathcal{X}_t \rar X$ and $g_t \colon \mathcal{Y}_t \rar Y$ inducing isomorphisms $(X,\Supp(B)) \cong (\mathcal{X}_t,\mathcal{B}_t)$ (resp. such that $\Supp(\mathcal B _t)$ contains the strict transform of $\Supp(B)$ and all the $f_t$ exceptional divisors) and $(Y,\Supp(D)) \cong (\mathcal{Y}_t,\mathcal{D}_t)$ (resp. such that $\Supp(\mathcal D _t)$ contains the strict transform of $\Supp(D)$ and all the $g_t$ exceptional divisors); and
    \item[(iii)] for every $(X,B) \rar (Y,D) \in \mathfrak{F}$ and $t$ as in condition (ii), we have $\phi \circ f_t = g_t \circ \sigma_t$ (resp. as rational maps),
    where $\sigma_t$ denotes the restriction of $\sigma$ to $\mathcal{X}_t$.
\end{itemize}
If a set of fibrations is birationally bounded and the maps $f_t$, $f_t \sups -1.$, $g_t$ and $g_t \sups -1.$ are isomorphisms in codimension 1, we say that $\mathfrak{F}$ is bounded in codimension 1.
\end{definition}

\subsection{The geometric Tate--Shafarevich group}
Here, we recall a few facts about the geometric Tate--Shafarevich group.
We will limit ourselves ot state only those facts regarding the Tate--Shafarevich group that will be needed in the article.
We refer to \cites{DG94,Gro94} for a detailed development of the theory we need.
An introduction to the topic over a one-dimensional base can be found in \cite{Sha65}.
Finally, all the needed facts about \'etale cohomology and group cohomology can be found in \cites{Mil80,Mil06}.

Let $Y$ be a variety defined over a field of characteristic zero, and let $k(Y)$ be its field of fractions.
Let $\pi \colon X \rar Y$ be an elliptic fibration, that is, a contraction whose general fiber is an elliptic curve.
Then, the generic fiber $X_{{\eta_Y}}$ is a genus 1 curve over $k(Y)$ with possibly no $k(Y)$-rational points.
Let $J(X)_{{\eta_Y}}$ denote its Jacobian.
Then, $X_{{\eta_Y}}$ is a principal homogeneous space over $J(X)_{{\eta_Y}}$ defined over $k(Y)$.
Furthermore, we can consider a projective model $\rho \colon J(X) \rar Y$ realizing $J(X)_{{\eta_Y}}$ over $Y$.

For some finite Galois extension $K \colon k(Y)$, the variety $X_{{\eta_Y}}$ acquires a $K$-rational point.
In particular, $X_{{\eta_Y}}$ and $J(X)_{{\eta_Y}}$ are non-canonically isomorphic over $K$.
From a geometric point of view, it means that we can find a finite Galois morphism $Y' \rar Y$ such that $X \times_Y Y'$ admits a rational section.

Now, let $E$ be an elliptic curve defined over $k(Y)$, and let $L \colon k(Y)$ be a finite Galois extension.
We want to consider all the elliptic fibrations $\pi \colon X \rar Y$ so that $X_{{\eta_Y}}$ is a principal homogeneous space over $E$ admitting an $L$-rational point.
That is, we are interested in elliptic fibrations $\pi \colon X \rar Y$ that acquire a rational section after the base change $\mathrm{Spec}(L) \rar Y$ and whose associated Jacobian fibration has $E$ as generic fiber.
Said otherwise, $\pi$ is an elliptic fibration whose generic fiber becomes isomorphic to $E_L \coloneqq E \otimes_{k(Y)} L$ as $L$-schemes.
This set is parametrized by the cohomology group $H^1(\mathrm{Gal}(L \colon k(Y)),E(L))$.
More precisely, this group parametrizes the isomorphism classes of the generic fibers $X_{{\eta_Y}}$ so that $\mathrm{Pic}^0(X_{{\eta_Y}}) \simeq E$ and $X_{{\eta_Y}}$ has an $L$-rational point.
Equivalently, the group $H^1(\mathrm{Gal}(L \colon k(Y)),E(L))$ parametrizes the birational classes of the elliptic fibrations with base $Y$ and associated Jacobian fibration corresponding to $E$ that acquire a rational section after the base change $\mathrm{Spec}(L) \rar Y$.
Notice that, by standard properties of group cohomology, the order of the elements of $H^1(\mathrm{Gal}(L \colon k(Y)),E(L))$ is finite and divides $[L \colon k(Y)]$.

Now, if we want to consider all the principal homogeneous spaces over $E$, we need to consider all possible finite Galois extensions $L \colon k(Y)$.
This information is encoded in the Weil--Ch\^atelet group $WC(E) \coloneqq H^1(\mathrm{Gal}(\overline{k(Y)}\colon k(Y)),E(\overline{k(Y)}))$, which is the direct limit of all possible groups $H^1(\mathrm{Gal}(L \colon k(Y)),E(L))$ as above.
In particular, we have the following geometric consequence.

\begin{lemma} \label{lemma order WC}
Let $E$ be an elliptic curve defined over $k(Y)$, and let $\pi \colon X \rar Y$ be an elliptic fibration with $X_{{\eta_Y}} \in WC(E)$.
If $\pi$ admits a multisection of degree $d$, then the order of $X_{{\eta_Y}}$ in $WC(E)$ divides $d!$.
\end{lemma}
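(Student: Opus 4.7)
The plan is to translate the existence of a multisection into an arithmetic statement about the generic fiber $X_{\eta_Y}$, and then run a standard Galois-cohomology argument to bound its order in $WC(E)$.

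First, I would reduce to the case of an \emph{irreducible} multisection. Given a multisection $M \subset X$ of degree $d$, I may replace $M$ by any irreducible component that dominates $Y$; if this component has degree $d'$ over $Y$, then $d' \leq d$ and in particular $d'! \mid d!$, so it is enough to prove the lemma for this component. Its restriction to the generic fiber $M_{\eta_Y} \subset X_{\eta_Y}$ is then a single closed point, with residue field $L$ satisfying $[L:k(Y)] = d'$. In particular, $X_{\eta_Y}$ acquires an $L$-rational point, namely the point corresponding to this closed point.

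Next, since we are in characteristic zero, $L/k(Y)$ is separable, so it admits a Galois closure $L' \supset L$. As $\operatorname{Gal}(L'/k(Y))$ embeds into $S_{d'}$, we have $[L':k(Y)] \mid d'!$, and \emph{a fortiori} $[L':k(Y)] \mid d!$. Since $X_{\eta_Y}$ has an $L$-rational point, it also has an $L'$-rational point, so by the paragraph preceding the lemma $X_{\eta_Y}$ corresponds to a class in $H^1(\operatorname{Gal}(L'/k(Y)), E(L'))$, and this class maps (via inflation, which is injective on $H^1$) to the class of $X_{\eta_Y}$ in $WC(E)$. In particular, the order of $X_{\eta_Y}$ in $WC(E)$ equals its order in $H^1(\operatorname{Gal}(L'/k(Y)), E(L'))$.

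Finally, I would invoke the standard fact of group cohomology, already recalled in the excerpt, that every element of $H^1(\operatorname{Gal}(L'/k(Y)), E(L'))$ has order dividing $[L':k(Y)]$. Combining the inclusions of divisibility, the order of $X_{\eta_Y}$ in $WC(E)$ divides $[L':k(Y)]$, which divides $d'!$, which divides $d!$, as required. The only point requiring care (rather than a real obstacle) is the passage to an irreducible component of $M$ and the observation that the divisibility $d'! \mid d!$ makes this reduction cost-free; the rest is bookkeeping with inflation-restriction and the elementary bound on $|\operatorname{Gal}(L'/k(Y))|$.
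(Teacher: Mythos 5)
Your proof is correct and follows essentially the same route as the paper's: pass from the multisection to a degree-$\leq d$ field extension $L$ over which $X_{\eta_Y}$ has a rational point, take a Galois closure $L'$ with $[L':k(Y)] \mid d!$, and conclude from the fact that every class in $H^1(\mathrm{Gal}(L'/k(Y)),E(L'))$ has order dividing the group order. The extra care you take — reducing to an irreducible component of the multisection and noting the injectivity of inflation so that the order in $WC(E)$ matches the order in the finite-level cohomology group — tightens steps that the paper leaves implicit, but the argument is the same.
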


\begin{proof}
Let $\pi \colon X \rar Y$ be as in the statement.
Then, the multisection induces a field extension $L$ of degree $d$ so that $X_{{\eta_Y}}$ admits an $L$-rational point.
Notice that $L \colon k(Y)$ may not be a Galois extension.
Thus, there is a Galois extension $L' \colon k(Y)$ so that $X_{{\eta_Y}}$ admits an $L'$-rational point, and $[L'\colon k(Y)]$ divides $d!$.
Then, the claim follows, as $X_{{\eta_Y}}$ is an element of $H^1(\mathrm{Gal}(L' \colon k(Y)),E(L'))$.
\end{proof}

The Weil--Ch\^atelet group $WC(E)$ is a very large group that parametrizes all the birational classes of all elliptic fibrations $\pi \colon X \rar Y$ whose Jacobian fibration is a compactification of $E$ over $Y$.
That is, $WC(E)$ parametrizes all birational classes of elliptic fibrations $\pi \colon X \rightarrow Y$ whose geometric generic fiber is isomorphic to $E_{\overline{k(Y)}}$ as $\overline{k(Y)}$-schemes.
This group admits a natural subgroup, called the Tate--Shafarevich group, which is denoted by $\Sh_Y(E)$.
For a formal definition of $\Sh_Y(E)$, we refer to \cite{DG94}.
Here, we limit ourselves to the following characterization as a set:
\[
\Sh_Y(E) = \lbrace C \in WC(E) | X_C \rar Y \; \text{has a rational section \'etale locally at $y$ for every point}\;y \in Y \rbrace,
\]
where $X_C \rar Y$ is some proper model of the curve $C$ defined over $k(Y)$ \cite{Gro94}*{\S~3}.
Thus, $\Sh_Y(E)$ imposes pretty restrictive conditions on the type of singular fibers that can occur.
In particular, multiple fibers cannot occur over codimension 1 points of the base.

\begin{proposition} \label{lemma tecnico DG}
Let $f \colon X \rar Y$ be an elliptic fibration, and let $j \colon J(X) \rar Y$ be its associated Jacobian fibration.
Let $p \in Y$ be a closed point.
Assume that $Y$ is smooth, $\dim(Y) \geq 2$, $f$ is smooth over $Y \setminus \lbrace p \rbrace$, and $j$ is smooth with a regular section.
Then, $X \in \Sh_Y(J(X)_{{\eta_Y}})$.
\end{proposition}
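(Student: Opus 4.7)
The plan is to verify the set-theoretic characterisation of $\Sh_Y(J(X)_{{\eta_Y}})$ recalled just above the proposition: one must show that for every point $y \in Y$, the fibration $f \colon X \rar Y$ acquires a rational section after some étale base change around $y$. I split the verification into the two cases $y \neq p$ and $y = p$, the second being the only substantial one.

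For $y \in Y \setminus \lbrace p \rbrace$ the morphism $f$ is smooth in a neighbourhood of $y$, so $X_y^{\mathrm{sh}} \coloneqq X \times_Y \Spec \mathcal{O}^{\mathrm{sh}}_{Y,y} \rar \Spec \mathcal{O}^{\mathrm{sh}}_{Y,y}$ is smooth. Any smooth torsor under a smooth commutative group scheme over a strictly henselian local base is trivial: the smooth fibre over the (algebraically closed) residue field of $\mathcal{O}^{\mathrm{sh}}_{Y,y}$ acquires a closed point, which lifts by smoothness of the total space to a section. This delivers the required rational section of $f$ étale locally at $y$.

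The heart of the argument is the case $y = p$. Set $R \coloneqq \mathcal{O}^{\mathrm{sh}}_{Y,p}$ and $U \coloneqq \Spec R \setminus \lbrace \bar p \rbrace$. By smoothness of $Y$ at $p$ and the hypothesis $\dim(Y) \geq 2$, $R$ is a strictly henselian regular local ring of dimension at least $2$, and $U$ has complement of codimension $\geq 2$ in $\Spec R$. Writing $X_R \coloneqq X \times_Y \Spec R$ and $J_R \coloneqq J(X) \times_Y \Spec R$, the hypotheses translate to: $X_R|_U \rar U$ is smooth, while $J_R \rar \Spec R$ is a smooth commutative (elliptic) group scheme equipped with a global section inherited from the regular section of $j$. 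Applying the previous paragraph pointwise, the $J_R|_U$-torsor $X_R|_U$ is trivial on the strict henselisation of every point of $U$.

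The final step, which I regard as the main obstacle, is to promote this pointwise triviality to an actual rational section of $X_R \rar \Spec R$. I intend to do this via cohomological purity for $H^1$ with values in a smooth commutative group scheme over a regular local ring of dimension $\geq 2$: concretely, the restriction map $H^1(\Spec R, J_R) \rar H^1(U, J_R|_U)$ is an isomorphism, and both groups vanish because $R$ is strictly henselian and $J_R$ is smooth (the vanishing is the strict-henselian argument of the second paragraph applied to a hypothetical torsor on all of $\Spec R$). Granting this, $[X_R|_U] = 0$, so $X_R|_U \rar U$ admits an honest section; since $U$ is dense in $\Spec R$, this restricts to a rational section of $X_R \rar \Spec R$, providing the desired rational section of $f$ étale locally at $p$. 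The essential role of the hypothesis $\dim(Y) \geq 2$ is precisely here: over a one-dimensional base, $\bar p$ would have codimension one and purity would fail, consistent with the fact that multiple fibres of elliptic surfaces are genuine obstructions to membership in the Tate--Shafarevich group. The precise form of the purity input in the present setting is the subject of \cite{Gro94}*{\S 3}, which I expect the author's proof to invoke.
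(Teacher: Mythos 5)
Your reduction to the punctured strict henselization $U = \Spec R \setminus \{\bar p\}$, $R = \mathcal{O}_{Y,p}^{\mathrm{sh}}$, is exactly the reduction in the paper, and your first paragraph (automatic triviality away from $p$, since the fibration is smooth there) is a correct and explicit version of what the paper leaves implicit. The reformulation $[X_R|_U] \in H^1_{\mathrm{\acute et}}(U, J_R|_U)$ and the closing logic (a section over $U$ gives a rational section over $\Spec R$, hence the required local triviality at $p$) are also fine.

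The gap is in the one step you yourself flag as the ``main obstacle.'' You assert a cohomological purity isomorphism $H^1(\Spec R, J_R) \xrightarrow{\sim} H^1(U, J_R|_U)$ for a smooth proper group scheme $J_R$ over a regular strictly henselian local ring of dimension $\geq 2$, and you deduce that both groups vanish. This is not a standard, off-the-shelf purity theorem. Absolute purity (Gabber) gives vanishing of low-degree local cohomology $H^i_{\{\bar p\}}$ for \emph{locally constant torsion} coefficients, and this does show $H^1(U, J_R|_U)$ is torsion-free via the Kummer sequence; but it does not by itself rule out a nonzero torsion-free part, and extending torsors under abelian schemes across a closed subset of codimension $\geq 2$ is a delicate question, not a formal consequence of purity. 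You acknowledge this by writing that ``the precise form of the purity input \ldots is the subject of \cite{Gro94}*{\S 3}, which I expect the author's proof to invoke'' --- in other words, the crucial vanishing is deferred rather than established, so the proposal does not actually close the argument.

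The paper's proof fills precisely this gap, and does so by a genuinely different mechanism. Rather than attempting a purity statement for $H^1(U, J_R|_U)$ with abelian-scheme coefficients, it invokes \cite{DG94}*{Theorem 3.1}, which identifies $\Sh_S(A)$ (for $S$ the punctured local scheme, $A$ the generic fibre of the Jacobian) with a subgroup of $H^2(S, \mathcal{E})$, where $\mathcal{E}$ is the \'etale sheaf of \cite{DG94}*{Definition 1.8} recording the component groups of the special fibres. Since $j$ is smooth, all fibres of $J(X) \times_Y \overline{S} \rightarrow \overline{S}$ are geometrically integral, so \cite{DG94}*{Proposition 1.12} gives $\mathcal{E} = 0$, hence $H^2(S,\mathcal{E}) = 0$ and $\Sh_S(A) = 0$. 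Note also that the group the paper controls, $\Sh_S(A) \subset H^1(k(U), A)$, is not literally the same object as your $H^1(U, J_R|_U)$; they are related by restriction to the generic point, but conflating them is an additional (minor) imprecision in the proposal. In short: right local picture, right use of $\dim Y \geq 2$, but the central vanishing is asserted rather than proved, and the paper's actual argument runs through the Dolgachev--Gross structure theorem for $\Sh$ rather than a purity theorem for $H^1$.
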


\begin{proof}
Let $\O Y,\overline{p}.$ denote the strict henselization of $\O Y,p.$, and let $\overline{p}$ denote the closed point of $\mathrm{Spec}(\O Y,\overline{p}.)$.
Let $\overline{S}$ denote $\mathrm{Spec}(\O Y,\overline{p}.)$ and let $S \coloneqq \overline{S} \setminus \lbrace \overline{p} \rbrace$.
Let $A$ denote the generic fiber of $J(X) \times_Y \overline{S}$.
Then, as $\overline{S}$ is strictly local, we have $\Sh_{\overline{S}}(A)=0$.
Indeed, $\O Y,\overline{p}.$ coincides with its own strict henselization, and thus the description of $\Sh_{\overline{S}}(A)$ as an intersection of kernels necessarily returns 0; see \cite{DG94}*{\S~1}.
If $f \colon X \rar Y$ does not correspond to an element of $\Sh_Y(J(X)_{{\eta_Y}})$, then it induces a non-zero element of $\Sh_S(A)$.
Thus, to conclude, it is enough to show $\Sh_S(A)=0$.
For this type of argument, see for instance \cite{DG94}*{\S~3}.
\newline
By assumption, $J(X) \times_Y \overline{S} \rar \overline{S}$ satisfies the conditions of \cite{DG94}*{Theorem 3.1}.
Thus, following the notation of \cite{DG94}*{Theorem 3.1}, $\Sh_S(A)$ is a subgroup of the group $H^2(S,\mathcal{E})$.
Here, $\mathcal{E}$ is the \'etale sheaf defined in \cite{DG94}*{Definition 1.8}.
Since $j$ is smooth, all the fibers of $J(X) \times_Y \overline{S}$ are geometrically integral.
Thus, by \cite{DG94}*{Proposition 1.12}, $\mathcal{E}=0$.
Then, the claim follows.
\end{proof}

\subsection{Technical statements} Here we collect the technical statements that will be needed in the course of the main proofs.

\begin{proposition} \label{small q-fact}
Let $(X,\Delta)$ be a klt pair, and let $f \colon X \rar Y$ be a contraction with $\K X. + \Delta \sim \subs \qq,f. 0$.
Let $\pi \colon Y' \rar Y$ be a small morphism.
Then, there exist a $\qq$-factorial pair $(X',\Delta')$ admitting a contraction $g \colon X' \rar Y'$ so that $X$ and $X'$ are isomorphic in codimension 1 and $(X',\Delta')$ is crepant to $(X,\Delta)$.
\end{proposition}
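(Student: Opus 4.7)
The plan is to produce $(X',\Delta')$ by running a relative minimal model program over $Y'$ on a common log resolution. First, take a log resolution $\mu \colon W \to X$ of $(X,\Delta)$ chosen so that the rational map $W \dashrightarrow Y'$ induced by $\pi^{-1} \circ f$ (which is defined outside the codimension-$\geq 2$ exceptional locus of $\pi$) extends to a morphism $\nu \colon W \to Y'$, so that $f \circ \mu = \pi \circ \nu$. Define $\Delta_W$ by $K_W + \Delta_W = \mu^*(K_X + \Delta)$ and split $\Delta_W = \Delta_W^+ - \Delta_W^-$ into its positive and negative parts. Since $(X,\Delta)$ is klt, the pair $(W,\Delta_W^+)$ is klt and $\Delta_W^-$ is $\mu$-exceptional. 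The hypothesis $K_X + \Delta \sim_{\qq,f} 0$ combined with the commutativity $f \mu = \pi \nu$ gives $K_W + \Delta_W \sim_{\qq,Y'} 0$, so that $K_W + \Delta_W^+ \sim_{\qq,Y'} \Delta_W^-$.

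Next, I would run a $(K_W + \Delta_W^+)$-MMP over $Y'$ with scaling of a sufficiently positive auxiliary divisor. The key observation is that $\Delta_W^-$ is very exceptional over $Y'$ in the sense of Birkar: for every prime divisor $P \subset Y'$, the smallness of $\pi$ implies that $P$ corresponds to a prime divisor $\pi_*P \subset Y$, and, since $f$ is a contraction, some prime component $Q \subset X$ of $f^{-1}(\pi_*P)$ maps onto $\pi_*P$; the strict transform of $Q$ on $W$ is then a prime divisor mapping onto $P$ that is not $\mu$-exceptional, hence lies outside $\Supp(\Delta_W^-)$. By Birkar's theorem on the MMP for pairs with very exceptional boundary, the MMP terminates with $\Delta_W^-$ fully contracted, producing a $\qq$-factorial klt pair $(X',\Delta')$ with a contraction $g \colon X' \to Y'$ and $K_{X'} + \Delta' \sim_{\qq,Y'} 0$.

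To close the argument, observe that each divisorial contraction in the MMP contracts a component of $\Delta_W^-$: indeed, $K_W + \Delta_W^+ \sim_{\qq,Y'} \Delta_W^- \geq 0$, so negativity of intersection can only occur along curves in $\Supp(\Delta_W^-)$; and since all such components are $\mu$-exceptional, the birational map $X \dashrightarrow X'$ (factored through $W$) introduces no new divisors and $X, X'$ are isomorphic in codimension one. Crepantness of $(X,\Delta)$ and $(X',\Delta')$ then follows from the equality of their pullbacks on the common resolution $W$, since the contracted parts push forward to zero on both sides. The main technical obstacle is verifying very exceptionality, particularly the verticality of $\Delta_W^-$ when $f$ has fibers of dimension $\geq 2$, where some $\mu$-exceptional divisors could a priori dominate $Y$; this is handled by first replacing $X$ with a small $\qq$-factorialization and choosing $\mu$ so that every $\mu$-exceptional divisor having negative coefficient in $\Delta_W$ lies over a codimension-$\geq 2$ subvariety of $Y$.
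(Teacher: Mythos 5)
Your approach is in the same spirit as the paper's (pass to a higher model and run an MMP to get rid of the exceptional divisors), but there are two genuine gaps.

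\textbf{Gap 1: not every $\mu$-exceptional divisor lies in $\Delta_W^-$.}
Since $(X,\Delta)$ is merely klt, the log discrepancies $a_E(X,\Delta)$ of $\mu$-exceptional divisors satisfy $a_E>0$, not $a_E>1$. Consequently $\mult_E(\Delta_W)=1-a_E(X,\Delta)$ can perfectly well lie in $[0,1)$, in which case $E$ appears in $\Delta_W^+$ (or with coefficient zero) and not in $\Delta_W^-$. Your $(K_W+\Delta_W^+)$-MMP over $Y'$ is designed to contract exactly $\Delta_W^-$ and stops as soon as the pushforward of $\Delta_W^-$ vanishes, so the $\mu$-exceptional divisors with $a_E\le 1$ survive to $X'$. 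Since those divisors are exceptional over $X$ but present on $X'$, the birational map $X\drar X'$ \emph{does} introduce new divisors and $X,X'$ are \emph{not} isomorphic in codimension one. The paper sidesteps this by deliberately choosing a boundary $\Gamma=(1-c)E$, where $E$ is the whole reduced exceptional divisor and $0<c<\epsilon=\mld(X,\Delta)$; then $K_{\hat X}+\phi_*^{-1}\Delta+\Gamma=\phi^*(K_X+\Delta)+F$ with $F=\sum_E(a_E-c)E$ strictly positive on \emph{every} exceptional component, so the relevant MMP contracts all of them rather than just those with $a_E>1$.

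\textbf{Gap 2: very exceptionality over $Y'$ fails when $f$ has fibres of dimension $\ge 2$.}
You correctly flag this yourself, but the proposed fix does not close the gap. A log resolution $W$ of $(X,\Delta)$ factoring through $Y'$ will in general have $\mu$-exceptional divisors that dominate $Y$ (and hence $Y'$): whenever a codimension-$\ge 2$ centre in $X$ dominates $Y$, resolving $(X,\Delta)$ there forces such a divisor. Replacing $X$ by a small $\qq$-factorialisation and being careful about which centres you blow up cannot eliminate these in general, because you still need $W$ to be a log resolution (for $(W,\Delta_W^+)$ or $(W,\phi_*^{-1}\Delta+\Gamma)$ to be klt) and to carry a morphism to $Y'$. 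If a horizontal $\mu$-exceptional divisor exists, the Birkar ``very exceptional'' hypothesis fails, and there is no guarantee the MMP over $Y'$ contracts it. The paper's remedy is a two-stage MMP: first run the MMP relative to $X^\nu$ (the normalisation of the main component of $X\times_Y Y'$, which is birational to $X$ but maps to $Y'$); by Fujino this contracts all components of $\Gamma$ that are exceptional over $X^\nu$, which includes every horizontal one, since a horizontal divisor dominates the locus over which $X^\nu\to X$ is an isomorphism and hence cannot be a divisor on $X^\nu$. The remaining components then lie over a codimension-$\ge 2$ subset of $Y'$ and are eliminated in a second MMP over $Y'$ via Lai's Lemma. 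Your one-stage MMP over $Y'$ does not reproduce this.

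In the paper's main application (elliptic Iitaka fibrations, relative dimension one) Gap 2 does not arise, but Gap 1 still does, so even there your argument as written does not yield ``isomorphic in codimension one''. The crepancy claim at the end of your argument is fine, and the very exceptionality argument you give for the vertical prime divisors of $Y'$ is correct as far as it goes; the missing ingredients are the choice of boundary that forces \emph{all} exceptional divisors into the negative part, and a mechanism (the intermediate model $X^\nu$) for dealing with horizontal exceptional components.
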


\begin{proof}
Let $\phi \colon \hat X \rar X$ be a log resolution of $(X,\Delta)$ admitting a morphism $\hat X \rar Y'$.
Since $(X,\Delta)$ is klt, it is $\epsilon$-log canonical for some $\epsilon > 0$.
Then, let $E$ be the reduced $\phi$-exceptional divisor, and define $\Gamma \coloneqq (1-c)E$, where $c$ is a rational number satisfying $0 < c < \epsilon$.
Let $X^\nu$ denote the normalization of the main component of $X \times_Y Y'$.
First, we run a partial $(\K \hat X. + \phi_* \sups -1. \Delta + \Gamma)$-MMP over $X^\nu$.
By \cite{Fuj11}*{Theorem 2.3}, after finitely many steps, this MMP contracts the prime components of $\Gamma$ that are exceptional over $X^\nu$.
In particular, all the prime components of $\Gamma$ that dominate $Y'$ are contracted, and all the prime components of $\Gamma$ that are not contracted have a center of codimension at least 2 in $Y'$.
Thus, by \cite{Lai11}*{Lemma 2.10}, we can contract these leftover components by running an MMP with scaling relative to $Y'$.
The model thus obtained satisfies the properties in the statement.
\end{proof}

\begin{lemma} \label{lemma restrict cbf}
Let $(\mathcal{X},\mathcal{B})$ be a sub-pair admitting a tower of contractions $\mathcal X \rar \mathcal Y \rar T$.
Assume that the following conditions are satisfied:
\begin{itemize}
    \item $(\mathcal X, \mathcal B) \rar \mathcal Y$ is an lc-trivial fibration, inducing a generalized pair $(\mathcal Y, \mathcal B _\mathcal{Y},\bM.)$ on $\mathcal{Y}$;
    \item $\mathcal{X}$, $\mathcal{Y}$ and $T$ are smooth;
    \item $\bM.$ descends on $\mathcal{Y}$, and $(\mathcal{Y},\mathcal{B}_\mathcal{Y})$ is log smooth over $T$;
    \item $(\mathcal{X},\mathcal{B})$ is log smooth over $T$.
\end{itemize}
Let $t \in T$ be a closed point, and assume that $(\mathcal{X}_t,\mathcal{B}_t) \rar \mathcal{Y}_t$ is an lc-trivial fibration.
Then, the boundary and moduli b-divisors induced on $\mathcal{Y}_t$ by $(\mathcal{X}_t,\mathcal{B}_t)$ agree with the restrictions of $\mathcal{B}_\mathcal{Y}$ and $\bM.$, respectively.
\end{lemma}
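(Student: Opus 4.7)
The plan is to verify that, under the log smoothness hypothesis over $T$, both the boundary and the moduli parts of the canonical bundle formula commute with restriction to the fiber $\mathcal{X}_t \to \mathcal{Y}_t$. I would first treat the boundary part, since once it is settled, the moduli part follows formally from the defining relation
\[
K_{\mathcal{X}} + \mathcal{B} \sim_\qq \pi^*(K_{\mathcal{Y}} + \mathcal{B}_{\mathcal{Y}} + \mathbf{M}_{\mathcal{Y}}).
\]

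For the boundary part, recall that for each prime divisor $P \subset \mathcal{Y}$ the coefficient of $P$ in $\mathcal{B}_{\mathcal{Y}}$ is $1 - c_P$, where $c_P = \lct_{\eta_P}(\mathcal{X}, \mathcal{B}-\mathcal{B}^h; \pi^*P)$. By the log smoothness of $(\mathcal{X},\mathcal{B})$ and $(\mathcal{Y},\mathcal{B}_{\mathcal{Y}})$ over $T$, the divisors $P$, $\pi^*P$, and $\mathcal{B}^v$ all meet $\mathcal{Y}_t$ and $\mathcal{X}_t$ transversely, and the combinatorial data determining the lct at the generic point of $P$ (namely, the multiplicities of components of $\pi^*P$ and of $\mathcal{B}^v$ along those components) are identical to the data determining the lct of $(\pi|_{\mathcal{X}_t})^*(P \cap \mathcal{Y}_t)$ with respect to $(\mathcal{X}_t, \mathcal{B}_t - (\mathcal{B}_t)^h)$ at the generic point of $P \cap \mathcal{Y}_t$. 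Therefore the coefficient that each component of $P \cap \mathcal{Y}_t$ receives in the boundary part of the restricted fibration coincides with the coefficient of $P$ in $\mathcal{B}_{\mathcal{Y}}$. Summing over prime divisors of $\mathcal{Y}$ yields $\mathcal{B}_{\mathcal{Y}}|_{\mathcal{Y}_t} = \mathbf{B}^t_{\mathcal{Y}_t}$, where $\mathbf{B}^t$ denotes the boundary b-divisor of $(\mathcal{X}_t,\mathcal{B}_t)\to\mathcal{Y}_t$.

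For the moduli part, I would restrict the canonical bundle formula above to $\mathcal{X}_t$. Adjunction in the log smooth setting provides $K_{\mathcal{X}}|_{\mathcal{X}_t} = K_{\mathcal{X}_t}$, $K_{\mathcal{Y}}|_{\mathcal{Y}_t} = K_{\mathcal{Y}_t}$, and $\mathcal{B}|_{\mathcal{X}_t} = \mathcal{B}_t$, giving
\[
K_{\mathcal{X}_t} + \mathcal{B}_t \sim_\qq (\pi|_{\mathcal{X}_t})^*\bigl(K_{\mathcal{Y}_t} + \mathcal{B}_{\mathcal{Y}}|_{\mathcal{Y}_t} + \mathbf{M}_{\mathcal{Y}}|_{\mathcal{Y}_t}\bigr).
\]
Combining with the identification of boundary parts from the previous step, and using the standard normalization of the moduli part (as in the construction recalled in the excerpt), $\mathbf{M}_{\mathcal{Y}}|_{\mathcal{Y}_t}$ agrees with the trace on $\mathcal{Y}_t$ of the moduli b-divisor of $(\mathcal{X}_t,\mathcal{B}_t)\to\mathcal{Y}_t$. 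Since $\mathbf{M}$ already descends on $\mathcal{Y}$ and $\mathcal{Y}_t \subset \mathcal{Y}$ is a smooth Cartier subvariety (by log smoothness of $\mathcal{Y}$ over the smooth base $T$), the restriction $\mathbf{M}_{\mathcal{Y}}|_{\mathcal{Y}_t}$ is $\qq$-Cartier on $\mathcal{Y}_t$, so the restricted moduli b-divisor actually descends to $\mathcal{Y}_t$ and the agreement holds at the level of b-divisors.

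The main obstacle is verifying, in the second step, that the lct computation commutes with restriction to $\mathcal{X}_t$. This is precisely the role of the log smoothness over $T$: it prevents components of $\pi^*P$ or of $\mathcal{B}^v$ from coalescing, splitting, or acquiring extra multiplicity upon restriction, so the simple normal crossing combinatorics governing the lct at $\eta_P$ and at the generic points of $P\cap \mathcal{Y}_t$ are identical. Once this transversality-preservation is granted, the rest of the argument is essentially formal bookkeeping in the canonical bundle formula.
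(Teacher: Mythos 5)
Your proposal takes a different route from the paper. The paper's proof inducts on $\dim(T)$ by slicing with a general smooth divisor $H\subset T$ through $t$, then invokes the proof of \cite{Flo14}*{Lemma 3.1} to identify the traces of the b-divisors on the model $\mathcal{Y}\times_T H$, and separately invokes \cite{Kol07}*{Theorem 8.3.7} to promote this to a statement about b-divisors. You instead compare the log canonical threshold computation directly on $\mathcal{X}$ versus on $\mathcal{X}_t$ to match the boundary parts, and then obtain the equality of moduli parts on $\mathcal{Y}_t$ (up to $\qq$-linear equivalence) by subtracting in the canonical bundle formula. The boundary comparison via lct's is plausible under the log smoothness hypotheses and is essentially the content one would extract from the proof of \cite{Flo14}*{Lemma 3.1}, so that portion is fine, if a bit terse.

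However, there is a genuine gap in your handling of the moduli part. You argue that because $\mathbf{M}$ descends on $\mathcal{Y}$ and $\mathcal{Y}_t$ is a smooth Cartier subvariety, the restriction $\mathbf{M}_\mathcal{Y}|_{\mathcal{Y}_t}$ is $\qq$-Cartier, and you conclude that ``the restricted moduli b-divisor actually descends to $\mathcal{Y}_t$.'' This is a non sequitur. Your formal comparison only identifies the \emph{trace} $\mathbf{M}^t_{\mathcal{Y}_t}$ of the moduli b-divisor $\mathbf{M}^t$ of $(\mathcal{X}_t,\mathcal{B}_t)\to\mathcal{Y}_t$ with $\mathbf{M}_\mathcal{Y}|_{\mathcal{Y}_t}$ up to $\qq$-linear equivalence. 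For the lemma's conclusion to hold at the level of b-divisors, you must also show that $\mathbf{M}^t$ \emph{descends} to $\mathcal{Y}_t$, i.e., that for every higher birational model $r\colon Y''\to\mathcal{Y}_t$ one has $\mathbf{M}^t_{Y''}=r^*\mathbf{M}^t_{\mathcal{Y}_t}$. Knowing that the trace on $\mathcal{Y}_t$ happens to be $\qq$-Cartier does not give this: a b-divisor can have a $\qq$-Cartier trace on a model without descending there. Descent of the moduli b-divisor is a deep fact which the paper obtains by verifying the standard normal crossing assumptions of \cite{Kol07}*{Definition 8.3.6} for $(\mathcal{X}_t,\mathcal{B}_t)\to\mathcal{Y}_t$ and then citing \cite{Kol07}*{Theorem 8.3.7}. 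Your argument needs to invoke something of this sort; the $\qq$-Cartier observation alone does not suffice.
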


We recall that, given a sub-pair $(W,\Delta)$ and a morphism $\pi \colon W \rightarrow U$, we say that $(W,\Delta)$ is {\it log smooth over $U$} if $W$ is smooth, $\Supp(\Delta)$ is a divisor with simple normal crossings, and every stratum of $(W,\Supp(\Delta))$ (including $W$) is smooth over $U$.

\begin{remark}
In the setup of Lemma \ref{lemma restrict cbf}, for some special point $t$, the induced fibration $(\mathcal{X}_t,\mathcal{B}_t) \rightarrow \mathcal{T}_t$ may not satisfy condition (ii) in \S~\ref{section_cbf}.
Nevertheless, one can apply the construction of boundary and moduli divisors to every fiber as described in \S~\ref{section_cbf}.
\end{remark}

\begin{proof}
Fix $t \in T$ as in the statement, and let $H \subset T$ be a general smooth divisor through $t$.
Then, as $H$ is general away from $t \in T$, $(\mathcal{X},\mathcal{B}) \times_T H \rar \mathcal{Y} \times_T H$ is an lc-trivial fibration.
By induction on the dimension of $T$, it suffices to show that this fibration has the same properties as $(\mathcal{X},\mathcal{B}) \rar \mathcal{Y} \rar T$ and that the boundary and moduli b-divisors induced on $\mathcal{Y} \times_T H$ are the restrictions of $\mathcal{B}_\mathcal{Y}$ and $\bM.$, respectively.
\newline
By base change, the properties in the statement are satisfied by $(\mathcal{X},\mathcal{B}) \times_T H \rar \mathcal{Y} \times_T H$.
Thus, we are left with showing the statement about the b-divisors.
Since $(\mathcal{Y},\mathcal{B}_\mathcal{Y})$ is log smooth over $T$, every fiber of $\mathcal{Y} \rar T$ intersects properly and transversally every stratum of $\Supp(\mathcal{B}_\mathcal{Y})$.
Furthermore, by \cite{Har77}*{Proposition III.10.1}, $H \times_T \mathcal{Y}$ is smooth.
Thus, $H \times_T \mathcal{Y}$ satisfies properties (1), (2) and (3) in the proof of \cite{Flo14}*{Lemma 3.1}.
Then, by the proof of \cite{Flo14}*{Lemma 3.1}, the claim about b-divisors on the model $\mathcal{Y} \times_T H$ follows.
By construction, the standard normal crossing assumptions defined in \cite{Kol07}*{Definition 8.3.6} are satisfied.
So, by \cite{Kol07}*{Theorem 8.3.7}, the moduli b-divisor induced by $(\mathcal{X},\mathcal{B})\times_T H \rar \mathcal{Y}\times_T H$ descends onto $\mathcal{Y} \times_T H$.
Thus, the statement holds at the level of b-divisors, and not just on a specific model.
This concludes the proof.
\end{proof}

\begin{corollary} \label{stratify cbf}
Let $(\mathcal{X},\mathcal{B})$ be a pair admitting a tower of contractions $\mathcal{X} \rar \mathcal{Y} \rar T$.
Assume that $(\mathcal{X},\mathcal{B}) \rar \mathcal{Y}$ is an lc-trivial fibration, $(\mathcal{X},\mathcal{B}) \rar T$ is a family of pairs, and $\mathcal{Y} \rar T$ is a family of normal varieties.
Then, there exists a stratification of $T$ such that the following holds.
Let $(\mathcal{Y},\mathcal{B}_\mathcal{Y},\bM.)$ be the generalized pair induced by the canonical bundle formula, and let $t \in T$ be a closed point.
Then, the boundary and moduli b-divisors induced on $\mathcal{Y}_t$ by $(\mathcal{X}_t,\mathcal{B}_t)$ agree with the restrictions of $\mathcal{B}_\mathcal{Y}$ and $\bM.$, respectively.
\end{corollary}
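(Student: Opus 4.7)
The strategy is Noetherian induction on $T$: it suffices to exhibit a dense open subset $U \subset T$ over which the statement holds, and then iterate on the closed complement $T \setminus U$ to obtain the desired stratification.

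To produce such a $U$, the plan is to shrink $T$ until the hypotheses of Lemma \ref{lemma restrict cbf} are in force. First, by generic smoothness and generic flatness, after shrinking $T$ we may assume that $T$, $\mathcal{X}$, and $\mathcal{Y}$ are smooth, that $\mathcal{Y} \rar T$ and $\mathcal{X} \rar T$ are smooth, and that the lc-trivial fibration property $(\mathcal{X}_t,\mathcal{B}_t) \rar \mathcal{Y}_t$ holds for every closed point $t \in U$ (the rank condition in the definition of lc-trivial fibration is open on $T$, and sub-log canonicity of the very general fiber propagates to an open set after shrinking). Second, to arrange the log smoothness hypotheses, take a log resolution $\mathcal{X}' \rar \mathcal{X}$ of $(\mathcal{X},\Supp(\mathcal{B}))$, replace $\mathcal{B}$ by its log pullback, and shrink $T$ further so that $(\mathcal{X}',\mathcal{B}')$ is log smooth over $T$; this is possible by generic smoothness applied to each stratum of $\Supp(\mathcal{B})$. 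Similarly, after computing the generalized boundary $\mathcal{B}_\mathcal{Y}$, take a log resolution of $(\mathcal{Y},\Supp(\mathcal{B}_\mathcal{Y}))$ on a sufficiently high model, ensure that the moduli b-divisor $\mathbf{M}$ descends there (using \cite{PS09}*{Theorem 8.1} when the general fiber is one-dimensional, or more generally by passing to a model where it descends — such a model exists by definition of b-$\qq$-Cartier), and again shrink $T$ so that everything is log smooth over $T$.

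After these modifications, the hypotheses of Lemma \ref{lemma restrict cbf} are satisfied on the resulting open set, so for every closed $t \in U$ the restrictions of the boundary and moduli b-divisors to $\mathcal{Y}_t$ coincide with those produced fiberwise by the canonical bundle formula applied to $(\mathcal{X}_t,\mathcal{B}_t) \rar \mathcal{Y}_t$. Since the b-divisors $\mathcal{B}_\mathcal{Y}$ and $\mathbf{M}$ are intrinsic to the lc-trivial fibration (independent of the birational resolutions chosen), this equality of restrictions is intrinsic and descends to the original model $\mathcal{Y} \rar T$.

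The only delicate point is the passage from the resolved, log smooth setting used to invoke Lemma \ref{lemma restrict cbf} back to the original $(\mathcal{X},\mathcal{B}) \rar \mathcal{Y} \rar T$. The main obstacle is ensuring that all the birational modifications (log resolutions on $\mathcal{X}$ and $\mathcal{Y}$, and passage to a model where $\mathbf{M}$ descends) can be carried out simultaneously over a common dense open in $T$; this is where shrinking $T$ several times in sequence, together with the openness of the smoothness locus for a morphism of finite type, is used. Noetherian induction on $T$ then concludes: apply the above to the closure of each component of $T \setminus U$ in turn to produce the required finite stratification.
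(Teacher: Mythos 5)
Your proposal is correct and follows essentially the same strategy as the paper: stratify so $T$ is smooth, pass to log resolutions of $\mathcal{X}$ and $\mathcal{Y}$ (with $\mathbf{M}$ descending on the latter) so that the hypotheses of Lemma \ref{lemma restrict cbf} hold over an open subset after generic smoothness, observe that the fiberwise b-divisors are intrinsic and can be computed on either model, and conclude by Noetherian induction. The only presentational wrinkle is the order of resolutions: the paper first fixes the model $\mathcal{Y}'$ where $\mathbf{M}$ descends and is log smooth, and \emph{then} takes a log resolution $\mathcal{X}'$ of $\mathcal{X}$ that factors through $\mathcal{Y}'$ (so that one genuinely has a tower $\mathcal{X}' \rar \mathcal{Y}' \rar T$ as Lemma \ref{lemma restrict cbf} requires); in your version you resolve $\mathcal{X}$ first and $\mathcal{Y}$ second, so you would need an extra resolution step to restore the factorization through the new base. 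This is a minor ordering detail and does not affect the correctness of the argument.
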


\begin{remark}
In the statement of Corollary \ref{stratify cbf} we are abusing notation in the following sense: we replace $T$ with a stratification, and we replace $\mathcal{X}$ and $\mathcal{Y}$ with the corresponding stratifications induced by base change.
\end{remark}

\begin{proof}
Up to a first stratification, we may assume that $T$ is smooth.
Now, fix an irreducible component $T_i$ of $T$, and let $(\mathcal{X}_i,\mathcal{B}_i)$ and $\mathcal{Y}_i$ denote the corresponding irreducible components.
Let $\mathcal{Y}'_i$ be a log resolution of $(\mathcal{Y}'_i,\mathcal{B}\subs \mathcal{Y}'_i.)$ where $\bM i.$ descends.
Then, let $(\mathcal{X}'_i,\mathcal{B}'_i)$ be a log resolution of $(\mathcal{X}_i,\mathcal{B}_i)$ that factors through $\mathcal{Y}'_i$.
Then, by generic smoothness, the assumptions of Lemma \ref{lemma restrict cbf} are satisfied over a non-empty open subset $U_i \subset T_i$.
As the b-divisors corresponding to a fiber $t \in U_i$ can be constructed both considering $(\mathcal{X}'_{i,t},\mathcal{B}'_{i,t}) \rar \mathcal{Y}'_{i,t}$ or $(\mathcal{X}_{i,t},\mathcal{B}_{i,t}) \rar \mathcal{Y}_{i,t}$, the claim of the statement holds for $t \in U_i$.
By Noetherian induction on $T_i \setminus U_i$, we conclude that the statement holds over $T_i$.
As there are finitely many $T_i$'s, then the claim follows.
\end{proof}

\begin{lemma} \label{lemma cbf jacobian}
Let $(X,\Delta)$ by a klt $n$-fold admitting an elliptic fibration $f \colon X \rar Y$ with $\K X. + \Delta \sim \subs \qq,f. 0$.
Let $(Y,B_Y,\bM.)$ denote the generalized pair induced on $Y$.
Let $J(X) \rar Y$ be a relatively minimal terminal model for the Jacobian fibration, and let $\pi \colon Y' \rar Y$ be the relative ample model of $J(X)$.
Then, the trace of $(Y,B_Y,\bM.)$ on $Y'$ is a generalized pair, i.e., the trace of the boundary b-divisor on $Y'$ is effective.
\end{lemma}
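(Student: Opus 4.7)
The strategy is to compare the canonical bundle formulas for $(X,\Delta)\rar Y$ and for the Jacobian fibration $(J(X),0)\rar Y$, and to exploit the fact that $Y'$ is defined in terms of the latter.

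First, I observe that the moduli b-divisors of the two lc-trivial fibrations $(X,\Delta)\rar Y$ and $(J(X),0)\rar Y$ coincide up to $\qq$-linear equivalence, since the moduli part only records the variation of the $j$-invariant of the generic fiber, and by construction the generic fiber of $J(X)\rar Y$ is the Jacobian of the generic fiber of $f$, hence has the same $j$-invariant.

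Second, let $\mathbf{B}$ be the boundary b-divisor induced by $(X,\Delta)\rar Y$ (so that $B_Y=\mathbf{B}_Y$) and let $\mathbf{B}^J$ be the boundary b-divisor induced by $(J(X),0)\rar Y$. I claim $\mathbf{B}\geq \mathbf{B}^J$ as b-divisors over $Y$. At a codimension-one point $P\in Y$, the two coefficients receive the same Kodaira-type correction (determined by the geometric fiber type of the Jacobian over $P$), but $\mathbf{B}_Y$ receives an additional contribution $1-\tfrac{1}{m_P}$ whenever $f$ has a multiple fiber of multiplicity $m_P$ over $P$; no such contribution occurs for $\mathbf{B}^J_Y$, because $J(X)\rar Y$ admits a rational section and therefore no multiple fibers. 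For valuations of higher codimension, I reduce to the codimension-one case by passing to a birational model of $Y$ which realizes the valuation as a prime divisor and applying the same analysis there.

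Third, I establish $\mathbf{B}^J_{Y'}\geq 0$. Since $(J(X),0)\rar Y$ is lc-trivial, the canonical bundle formula gives $K_{J(X)}\sim_\qq j^\ast(K_Y+\mathbf{B}^J_Y+\mathbf{M}_Y)$, so the relative ample model $\pi\colon Y'\rar Y$ of $J(X)$ coincides with the ample model over $Y$ of the generalized pair $(Y,\mathbf{B}^J_Y,\mathbf{M})$. Terminality of $J(X)$ translates into generalized klt-ness of this pair; by standard properties of ample models of generalized klt pairs, the trace of the boundary on $Y'$ is effective. Combining with the second step, $\mathbf{B}_{Y'}\geq \mathbf{B}^J_{Y'}\geq 0$, which is the asserted effectivity.

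\textbf{Main obstacle.} The most delicate step is the b-divisor-level comparison $\mathbf{B}\geq \mathbf{B}^J$. At codimension-one points of $Y$, the argument is explicit via Kodaira's classification of singular fibers combined with the formula for multiple-fiber contributions. Extending the inequality to arbitrary geometric valuations over $Y$ requires showing that this comparison persists under arbitrary birational base changes of $Y$. A reasonable approach is to work over an open subset $U\subset Y$ where $X$ and $J(X)$ become isomorphic after the finite \'etale cover produced by the principal homogeneous space structure (as discussed in the Tate--Shafarevich paragraph earlier), reducing the comparison there to an \'etale-local computation, and then to handle the complement by Noetherian induction on blowups realizing the remaining valuations as divisors. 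A secondary technical point is the verification of generalized klt-ness of $(Y,\mathbf{B}^J_Y,\mathbf{M})$ and effectivity of the boundary on the ample model, both routine once the generalized MMP framework is in place.
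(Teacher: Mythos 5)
Your overall strategy—compare the boundary b-divisor coming from $(X,\Delta)\to Y$ with the one coming from the Jacobian, and use that the latter is effective—is the same as the paper's. However, the execution has several gaps, and the paper takes a route that deliberately avoids the strongest claim in your proposal.

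First, a basic point of confusion: the fibration $(J(X),0)\to Y$ is \emph{not} lc-trivial. Since $J(X)$ is taken to be a relatively minimal terminal model over $Y$, the divisor $K_{J(X)}$ is relatively nef but only becomes $\qq$-trivial after contracting to the relative ample model $Y'$. So the canonical bundle formula for the Jacobian produces a generalized pair on $Y'$, not on $Y$. Once you fix this, your third step becomes vacuous: the boundary divisor of $(J(X),0)\to Y'$ is automatically effective because the boundary on $J(X)$ (namely $0$) is effective. No considerations about generalized klt-ness or ample models are needed.

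The real content is your second step, the b-divisor inequality $\mathbf{B}\geq\mathbf{B}^J$, and this is where the argument is incomplete. Your comparison at codimension-one points of $Y$ via the Kodaira classification with multiple-fiber contributions is essentially the content of \cite{Gro94}*{Lemma 1.6}, which is what the paper cites. But applying that comparison requires the elliptic fibration $X\to Y$ itself to be relatively minimal, i.e.\ $K_X\sim_{\qq,f}0$ with trivial boundary, whereas in our setting only $K_X+\Delta$ is $\qq$-trivial over $Y$. The paper first passes to a relative minimal model $\hat X$ of $K_X$ over $Y$, noting that the boundary divisor on the base from $(\hat X,0)$ is squeezed between $0$ and the one from $(\hat X,\hat\Delta)$; you omit this reduction. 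More seriously, for valuations of higher codimension over $Y$ your ``pass to a birational model and repeat the codim-1 analysis'' is not justified: over a blowup of $Y$ the Jacobian fibration ceases to be relatively minimal/terminal, the fiber structure can degenerate, and the Kodaira/multiple-fiber comparison does not carry over unchanged. The paper never tries to prove the full b-divisor inequality. Instead, after establishing $\pi_*\Gamma\leq B_Y$ over a big open subset of a $\qq$-factorialization of $Y$ (another reduction you omit; it is needed so that $K_Y+\pi_*\Gamma+\mathbf{M}_Y$ is $\qq$-Cartier), it exploits that $K_{Y'}+\Gamma+\mathbf{M}_{Y'}$ is $\pi$-ample, precisely because $Y'$ is the relative ample model of $K_{J(X)}$. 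This makes $(K_{Y'}+\Gamma+\mathbf{M}_{Y'})-\pi^*(K_Y+\pi_*\Gamma+\mathbf{M}_Y)$ both $\pi$-ample and $\pi$-exceptional, so the negativity lemma gives $\Gamma\leq\Gamma'\leq B_{Y'}$ directly. This negativity-lemma argument on $Y'$ is the key ingredient missing from your proposal, and is what allows the proof to get away with a comparison only at codimension one of $Y$ rather than at all geometric valuations.
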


\begin{proof}
Up to taking a small $\qq$-factorialization, we may assume that $X$ is $\qq$-factorial.
Then, let $\hat X$ be a relative minimal model for $\K X.$ over $Y$.
Then, $X \drar \hat X$ is a birational contraction, and the relative ample model $\hat Y \rar Y$ defines a birational morphism.
Let $(\hat X,\hat \Delta)$ denote the trace of $(X,\Delta)$ on $\hat X$, and let $(\hat Y,B_{\hat Y},\bM.)$ denote the trace of $(Y,B_Y,\bM.)$ on $\hat Y$.
Let $(\hat Y, \Omega_{\hat Y},\bM.)$ the generalized pair induced by $(\hat X , 0)$ on $\hat Y$.
Since we have $\K \hat X. \sim \subs \qq,\hat Y . 0$ and $\hat \Delta \geq 0$,
we have $0 \leq \Omega_{\hat Y} \leq B_{\hat Y}$.
In particular, $B_{\hat Y}$ is effective.
Hence, for the purposes of the proof, we may replace $(X,\Delta)$ with $(\hat X,\hat \Delta)$.
Indeed, the ample model of the Jacobian fibration relative to $\hat Y$ naturally admits a rational contraction to $Y'$, and the coefficients of the divisors appearing on $Y'$ agree with the coefficients on $\hat Y$.
This follows from the fact that a minimal model of the Jacobian fibration relative to $\hat Y$ admits a birational contraction to a minimal model relative to $Y$.
Thus, in the rest of the proof, we may assume that $X$ is $\qq$-factorial and $\K X. \sim_{\qq,f}0$.
\newline
By Proposition \ref{small q-fact}, we may assume that $Y$ is $\qq$-factorial.
Let $U \subset Y$ be a big open subset such that
\begin{itemize}
    \item $\Supp(B_Y)$ is simple normal crossing on $U$;
    \item $\K X. \sim_\qq 0$ over $U$; and
    \item $\K J(X). \sim_\qq 0$ over $U$.
\end{itemize}
This open set exists, as $J(X)$ is relatively minimal over $Y$, and its relative ample model is birational to $Y$.
Let $B_U$ denote the restriction of $B_Y$ to $U$, and let $\Gamma_U$ denote the boundary divisor of the canonical bundle formula for $J(X) \times_Y U \rar U$.
Then, by \cite{Gro94}*{Lemma 1.6}, we have $0 \leq \Gamma_U \leq B_U$.
Now, let $(Y',\Gamma,\bM.)$ denote the generalized pair induced by $J(X) \rar Y'$.
Notice that the moduli b-divisor is the same as the one corresponding to the morphism $(X,\Delta) \rar Y$, as the corresponding $j$-maps agree.
We have $\Gamma \geq 0$.
Furthermore, as $Y'$ is the relative ample model of $\K J(X).$ over $Y$, $\K Y'. + \Gamma + \bM Y'.$ is ample over $Y$.
Since $Y$ is $\qq$-factorial, we can consider the generalized pair $(Y,\pi_*\Gamma,\bM.)$.
Let $\Gamma'$ denote the trace of the corresponding boundary b-divisor on $Y'$.
Thus, the divisor $\K Y'. + \Gamma + \bM Y'.-\pi^*(\K Y. + \pi_*\Gamma + \bM Y.)$ is $\pi$-ample, and its support is $\pi$-exceptional.
Then, by the negativity lemma \cite{KM98}*{Lemma 3.39}, we have
\[
\K Y'. + \Gamma + \bM Y'. \leq \K Y. + \Gamma' + \bM Y'..
\]
Since $(\pi_* \Gamma)|_U=\Gamma_U$, we have $\pi_*\Gamma \leq B_Y$.
Then, the claim follows.
\end{proof}

\begin{lemma} \label{lemma same smooth fibers 1}
Let $(X_1,\Delta_1)$ and $(X_2,\Delta_2)$ be two quasi-projective $\qq$-factorial klt pairs admitting contractions $f_1 \colon X_1 \rar Y$ and $f_2 \colon X_2 \rar Y$ to the same smooth quasi-projective variety $Y$ with $\dim(Y)=\dim(X)-1$.
Further assume that $\K X_i. + \Delta_i \sim \subs \qq,f_i. 0$ for $i=1,2$, $X_1$ and $X_2$ are isomorphic in codimension 1, and $(X_1,\Delta_1)$ and $(X_2,\Delta_2)$ are crepant to each other.
Fix $y \in Y$.
Then, $f_1$ is smooth over $y$ if and only if so is $f_2$.
\end{lemma}

\begin{proof}
It suffices to show that $X_1$ and $X_2$ are connected by a sequence of $(\K X_i. + \Delta_i)$-flops over $Y$.
By the assumption $\dim(Y)=\dim(X)-1$, a smooth fiber is a curve in $X$ that deforms over an open set of $Y$.
Thus, a smooth fiber cannot be contained in the flopped locus.
Now, let $A_1$ be an ample irreducible divisor on $X_1$, and let $A_2$ be its strict transform on $X_2$.
Then, $A_2$ is big and movable on $X_2$.
For $0 < \epsilon \ll 1$, the pair $(X_2,\Delta_2+\epsilon A_2)$ is klt.
Then, we can run a relative $(\K X_2. + \Delta_2+\epsilon A_2)$-MMP with scaling over $Y$.
Since $\K X_2. + \Delta_2 \sim \subs \qq,f_2. 0$, it is an $A_2$-MMP.
Since $A_2$ is movable over $Y$, it has to be a sequence of $A_2$-flips.
By \cite{BCHM}, this MMP terminates with a good relative minimal model.
By construction, this model is $\qq$-factorial, it isomorphic in codimension 1 to $X_1$, and it admits a morphism to $X_1$, since $X_1$ is the relative ample model of $A_1$.
Since $X_1$ is $\qq$-factorial, this morphism cannot be a small morphism.
Therefore, this minimal model has to be $(X_1,\Delta_1+\epsilon A_1)$.
This concludes the proof.
\end{proof}

\begin{lemma} \label{lemma same smooth fibers 2}
Let $(X,\Delta)$ be a klt pair admitting a contraction $f \colon X \rar Y$ of relative dimension 1, where $Y$ is smooth.
Let $\pi \colon X' \rar X$ be a small morphism, and let $(X',\Delta')$ denote the trace of $(X,\Delta)$ on $X'$.
Fix $y \in Y$.
Then, $X \rar Y$ is smooth over $y$ if and only if so is $X' \rar Y$.
\end{lemma}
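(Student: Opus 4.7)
My plan is to prove the two directions of the equivalence separately; the forward direction follows quickly from purity of the exceptional locus, while the reverse direction requires a connectedness argument specific to the relative dimension one setting.

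For the forward direction, I would argue as follows. If $f$ is smooth over $y$, then $X$ is smooth (hence regular) in an open neighborhood $W$ of $X_y$. By the purity of the exceptional divisor for proper birational morphisms to regular schemes, every irreducible component of $\mathrm{Ex}(\pi) \cap \pi^{-1}(W)$ has codimension one in $X'$; since $\pi$ is small by assumption, no such components exist, so $\pi$ restricts to an isomorphism over $W$, and $f \circ \pi$ is smooth over $y$.

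For the reverse direction, suppose $f \circ \pi$ is smooth over $y$, so that $X'$ is smooth in a neighborhood of $X'_y$ and $X'_y$ is a disjoint union of smooth curves. It suffices to show $X_y$ is disjoint from $\pi(\mathrm{Ex}(\pi))$: since $f$ is projective and $f(\pi(\mathrm{Ex}(\pi)))$ is then closed in $Y$ and misses $y$, there is an open neighborhood $V \ni y$ over which $\pi$ restricts to an isomorphism, yielding $f$ smooth over $y$. The core step is to suppose for contradiction that some $x \in X_y$ lies in $\pi(\mathrm{Ex}(\pi))$. By Zariski's connectedness theorem (applicable since $\pi$ is proper and $X$ is normal), $\pi^{-1}(x) \subset X'_y$ is connected and positive-dimensional, hence equals a whole component $C$ of $X'_y$. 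Because $f$ has pure relative dimension one, $x$ lies on some one-dimensional component $C_1$ of $X_y$; its strict transform $\widetilde{C_1} \subset X'_y$ is a component distinct from $C$, and therefore disjoint from $C$ by smoothness of $X'_y$. The restriction $\pi|_{\widetilde{C_1}} \colon \widetilde{C_1} \to C_1$ is proper and birational, hence surjective, so $x$ admits a preimage on $\widetilde{C_1}$ disjoint from $C$; this contradicts the connectedness of $\pi^{-1}(x)$.

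The main obstacle I anticipate is cleanly identifying $\widetilde{C_1}$ as a component of $X'_y$ disjoint from $C$: this combines the smoothness of $X'_y$ (so distinct components cannot meet) with the pure relative dimension one hypothesis (ensuring that $x$ lies on a genuine one-dimensional piece of $X_y$ whose strict transform survives as a component of $X'_y$). Beyond verifying this structural point, no deeper difficulties are expected.
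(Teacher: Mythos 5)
Your proof is correct and follows essentially the same two-step strategy as the paper: show that smoothness of either side forces $\pi$ to be an isomorphism over a neighborhood of the fiber. For the forward direction you invoke van der Waerden purity (the paper just says "$X$ does not admit small modifications over $y$"), and for the reverse direction you produce two disjoint components of the smooth fiber $X'_y$ that both must meet $\pi^{-1}(x)$, contradicting connectedness — this is the content of the paper's terse assertion that $X'_y$ would fail to be an irreducible curve if $\pi$ were not an isomorphism over $y$. Both arguments rely on the same three facts: smooth varieties have no small modifications; $\pi^{-1}(x)$ is connected and positive-dimensional over the non-isomorphism locus; and a one-dimensional smooth fiber has pairwise disjoint irreducible components.

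One small imprecision: you define $\widetilde{C_1}$ as "the strict transform" of $C_1$ and call $\pi|_{\widetilde{C_1}}$ birational. When $\dim Y \geq 2$ it can happen that $C_1$ is entirely contained in $\pi(\mathrm{Ex}(\pi))$, in which case the strict transform in the usual sense is not what you want, and $\pi|_{\widetilde{C_1}} \colon \widetilde{C_1} \to C_1$ need not be birational. The argument is easily repaired: since $\pi$ is proper and surjective, $\pi^{-1}(C_1)$ is a closed subset of the one-dimensional $X'_y$ that surjects onto $C_1$, so some irreducible component $\widetilde{C_1}$ of $X'_y$ dominates $C_1$; that is all you need (replace "birational" by "generically finite", whence surjective by properness). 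This component is distinct from $C$ (they have different images) and disjoint from it by smoothness, so the contradiction goes through unchanged.
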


\begin{proof}
Since $Y$ is smooth, if $X \rar Y$ is smooth over $y \in Y$, then $X$ is smooth over a neighborhood of $y \in Y$.
In particular, $X$ does not admit small modifications over $y$.
Now, assume that $X' \rar X$ is not an isomorphism over $y \in Y$.
Then, the fiber $X'_y$ is not an irreducible curve, as $X'_y \rar X_y$ is not an isomorphism.
This concludes the proof.
\end{proof}

\section{Examples}

In this work, we are interested in $n$-folds $(X,\Delta)$ with $\kappa(X,\Delta)=n-1$.
Among these, one may be particularly interested in smooth varieties without boundary and their minimal models.
If $n=2$, we get smooth elliptic surfaces with $\kappa(X)=1$.
If a set $\mathfrak{Q}$ of elliptic surfaces with Kodaira dimension 1 is bounded, then the minimal degree of a multisection of the Iitaka fibration for $X \in \mathfrak{Q}$ is bounded.
Indeed, let $\pi \colon \mathcal{X} \rar T$ be a bounding family.
Up to stratification, we may assume that $\pi$ is smooth.
Thus, by deformation invariance of plurigenera \cite{wilson}, the relative Iitaka fibration of $\pi$ induces the Iitaka fibration fiberwise.
Thus, $\pi$ factors as an ellitpic fibration over $T$, say $\mathcal{X} \rar \mathcal{C} \rar T$.
For simplicity, assume that $T$ is irreducible (in general, it has finitely many irreducible components).
Then, a multisection of degree $k$ of $\mathcal{X} \rar \mathcal{C}$ induces a multisection of degree $k$ of $\mathcal{X}_t \rar \mathcal{C}_t$ for $t \in T$ general.
Thus, by Noetherian induction, there is a positive integer $d$ such that every $X \in \mathfrak{Q}$ admits a multisection of the Iitaka fibration of degree at most $d$.
More generally, if we have a family of elliptic $n$-folds that are bounded and such that the elliptic fibration deforms in the family, we have an upper bound on the minimal degree of a multisection.

Therefore, the hypotheses of Theorem \ref{main thm} are necessary to achieve boundedness.
On the other hand, one may wonder whether the boundedness of the base of the Iitaka fibration may put any constraints on the minimal degree of a multisection.
We will show that this is not the case.
We will address the isotrivial and the non-isotrivial cases separately.

\begin{example} \label{example1}
Let $C$ be a Riemann surface of genus $g(C) \geq 2$.
Denote by $\pi _n \colon C_n \rar C$ a cyclic cover with group $\zz / n \zz$.
By choosing an appropriate $n$-torsion element in $\mathrm{Pic}^0(C)$, we may assume that $\pi_n \colon C_n \rar C$ is \'etale.
Let $E$ be an elliptic curve, and let $p_n \in E$ be a distinguished point of order $n$ on $E$.
If $E$ is chosen to be very general, we have $\mathrm{Hom}(E,J(C_n))=0$ for all $n$.
In particular, $\Pic (C_n \times E) \simeq \Pic(C_n) \times \Pic(E)$.
Let $\zz / n \zz$ act on $E$ by translation by $p_n$.
Therefore, $\zz / n \zz$ acts diagonally on $C_n \times E$.
Let $\sigma_n \colon C_n \times E \rar S_n$ denote the quotient by this action.
Thus, we get a smooth and isotrivial fibration $f_n \colon S_n \rar C$ with fiber $E$.
By construction, we have $\K S_n. \sim_{\qq,f_n} 0$.
Since $f_n$ is smooth and isotrivial, it follows from Kodaira's canonical bundle formula that we have $\K S_n. \sim_\qq f_n^* \K C.$.
In particular, $f_n$ is the Iitaka fibration of the surface $S_n$.
\newline
Now, a $k$-section of $f_n$ corresponds to a $\zz/n\zz$-invariant $k$-section of $\sigma_n$.
If a divisor $D \subset C_n \times E$ is $\zz / n \zz$-invariant, then so is $\O C_n \times E. (D)$ in $\Pic (C_n \times E)$.
By our choice of $E$, we have $D \sim P \boxtimes Q$ for some divisors $P$ and $Q$ on $C_n$ and $E$, respectively.
Since the action preserves the two natural projections on $C_n \times E$, it follows that the line bundles $\O C_n. (P)$ and $\O E. (Q)$ have a $\zz / n \zz$-action.
Thus, $\O E. (Q)$ is the pull-back of a line bundle under the quotient morphism $E \rar E/(\zz / n \zz) \cong E$.
In particular, we have $n| \deg(Q)$.
As $D$ is a $k$-section, $Q$ has has degree $k > 0$.
Then,
we have that $k \geq n$.
Thus, the set of surfaces $\lbrace S_n \rbrace \subs n \in \nn.$ is not bounded, as the minimal degree of a multisection of $f_n$ is not bounded.
\end{example}

\begin{example} \label{example2}
Let $B$ be a curve, and let $J \rar B$ be a Jacobian fibration that is not isotrivial.
Further, assume that the Tate--Shafarevich $\Sh_B(J_\eta)$ group is infinite.
This can be achieved by \cite{Sha65}*{Theorem VII.11}.
For instance, this can be achieved by an elliptic K3 surface, as observed in \cite{Gro94}.
For an example of surface of Kodaira dimension 1, one could consider a base curve $B$ with genus at least 2 and the elliptic surface induced by base change from an elliptic K3 surface.
Then, we claim that there is a sequence of surfaces $\pi_n \colon V_n \rar B$ that are locally \'etale isomorphic to $J$ (i.e., they are accounted for in $\Sh_B(J_\eta)$) and such that the minimal degree of a multisection of $\pi_n$ diverges to $\infty$.
\newline
Let $\pi \colon V \rar B$ be a fibration that is locally isomorphic to $J \rar B$, and let $d$ be the minimal degree of a multisection of $\pi$.
Then, there is a degree $d$ extension $K : k(B)$ such that $V_\eta$ has a $K$-rational point.
This extension may not be Galois, but we can find a further extension $L : K$ such that $L: k(B)$ is Galois of degree at most $d!$.
Then, $V_\eta$ corresponds to a non-zero element $\sigma$ of $H^1(G,E(L))$, where we set $G \coloneqq \mathrm{Gal}(L:k(B))$.
Thus, the order of $\sigma$ divides $d!$.
Therefore, the order of the image of $\sigma$ in the Weil--Ch\^atelet group $WC(J_\eta)$ divides $d!$.
Call this element $\tau$.
By assumption, $\tau$ is an element of $\Sh_B(J_\eta)$, which is a subgroup of $WC(J_\eta)$.
On the other hand, by the choice of $J \rar B$, the Tate--Shafarevich group contains a non-trivial divisible group.
Therefore, for every $n \in \nn$ we can find an element $\rho_n$ of order $n$ in it.
Let $\pi_n \colon V_n \rar B$ the minimal elliptic fibration corresponding to $\rho_n$.
Then, by the above discussion, $\limsup d_n = \infty$, where $d_n$ denotes the minimal degree of a multisection of $\pi_n$.
\end{example}

Let $S$ be a minimal surface with $\kappa (S) =1$, and let $\pi \colon S \rar C$ be the Iitaka fibration.
By Kodaira's canonical bundle formula, we may write $\K S. \sim_\qq \pi^*(\K C. + B_C + M_C)$, where $B_C \geq 0$ and $M_C$ can be represented by an effective divisor.
Example \ref{example1} and Example \ref{example2} show that it is not enough to fix $\deg(\K C. + B_C + M_C)$ to achieve boundedness of surfaces of Kodaira dimension 1.
On the other hand, the discussion above shows that the minimal degree of a multisection is bounded in a family of elliptic fibrations.
Therefore, the assumptions of Theorem \ref{main thm} are both necessary and sufficient to achieve boundedness (in codimension 1).

\section{Good minimal models}

In this section, we prove the existence of good minimal models under some conditions on the Kodaira dimension.
The statement generalizes results 
of Grassi and Wen for $n$-folds with no boundary of Kodaira dimension $n-1$ \cite{GW19}, as well as results of Hao and Schreieder for smooth $n$-folds of Kodaira dimension at least $n-3$ \cite{HS20}*{Theorem 2.1}.

\begin{proof}[Proof of Theorem \ref{good mm}]
Let $X \drar Y$ be the Iitaka fibration of the pair $(X,\Delta)$, where we have
$$
Y=\mathrm{Proj}(\oplus_{l \geq 0}\Gamma(X,\mathcal{O}_X(l(K_X+\Delta)))).
$$
Notice that the ring $\oplus_{l \geq 0}\Gamma(X,\mathcal{O}_X(l(K_X+\Delta)))$ is finitely generated by \cite{BCHM}, so the $\mathrm{Proj}$ construction is well defined and provides a normal variety.
Let $\pi \colon X' \rar X$ be a birational morphism so that $f \colon X' \rar Y$ is a morphism.
We may assume that $\pi$ is a log resolution of $(X,\Delta)$.
Let $E'$ denote the reduced $\pi$-exceptional divisor, and set $\Delta' \coloneqq \pi_*^{-1}\Delta$.
Then, as $(X,\Delta)$ is klt, we may find $0<c \ll 1$ so that $(X',\Delta'+(1-c)E')$ is klt and has the same log canonical ring as $(X,\Delta)$.
Furthermore, we may assume that $\mathrm{mld}(X,\Delta) > c$.
This implies that every prime component of $E'$ lies in the stable base locus of $K_{X'}+\Delta'+(1-c)E'$.
\newline
As aruged in the proof of \cite{FM00}*{Theorem 5.2}, the general fiber of $f \colon (X',\Delta'+(1-c)E') \rar Y$ is a klt pair of Kodaira dimension 0.
Furthermore, by assumption, the dimension of the general fiber is at most 3.
Hence, by \cite{KMM94}, the general fiber of $f$ has a good minimal model.
Therefore, by generic smoothness, \cite{HMX18b}*{Theorem 1.9.1}, and \cite{HX13}*{Theorem 1.1}, it follows that $(X',\Delta'+(1-c)E')$ has a relative good minimal model over $Y$.
Call it $(X'',\Delta''+(1-c)E'')$, and let $Y''$ be the corresponding ample model.
By construction, $Y'' \rar Y$ is a birational morphism.
\newline
Since $\K X''.+\Delta''+(1-c)E''\sim_{\qq,Y''} 0$, we may apply the canonical bundle formula.
Thus, a generalized klt pair $(Y'',B'',\bM.)$ is induced on $Y''$.
Since $\K Y''.+B''+\bM Y''.$ is big, by \cite{BZ16}*{proof of 4.4.(2)}, we may run a $(\K Y''.+B''+\bM Y''.)$-MMP with scaling, which terminates with a good minimal model.
Let $(Y^m,B^m,\bM.)$ denote the model thus obtained.
By \cite{HX13}*{Corollary 2.13}, we can follow this MMP step by step on $X''$.
In particular, there is a birational contraction $X'' \drar X^m$ such that $\K X^m. + \Delta^m+(1-c)E^m\sim_\qq g^*(\K Y^m. + B^m +\bM Y^m.)$, where $\Delta^m$ and $E^m$ denote the push-forwards of $\Delta''$ and $E''$ on $X^m$, repsectively.
Furthermore, this contraction preserves the log canonical ring of $(X'',\Delta''+(1-c)E'')$.
Since $\K Y^m. + B^m +\bM Y^m.$ is semi-ample, it follows that $\K X^m.+\Delta^m+(1-c)E^m$ is semi-ample.
On the other hand, since $E'$ is in the stable base locus of $(X',\Delta'+(1-c)E')$, it follows that $E^m=0$.
In particular, $X \drar X^m$ is a birational contraction.
\newline
To conclude, we need to show that $X \drar X^m$ is $(\K X. + \Delta)$-negative.
First, we notice that, since $\K X^m. + \Delta^m$ is semi-ample and $(X^m,\Delta^m)$ has the same log canonical ring as $(X,\Delta)$, it follows that $X \drar X^m$ is $(X,\Delta)$-non-positive.
Now, let $P$ be any prime divisor on $X$.
Then, by construction, we have $a_P(X,\Delta)=a_P(X',\Delta'+(1-c)E')$.
Notice that, by construction, $X' \drar X^m$ is $(X',\Delta'+(1-c)E')$-negative.
Therefore, if $P$ is contracted by $X' \drar X^m$, it follows that $a_P(X^c,\Delta^m) > a_P(X',\Delta'+(1-c)E')$.
Thus, $X \drar X^m$ is $(X,\Delta)$-negative.
This concludes the proof.
\end{proof}

\section{Minimal models with $\kappa(X,\Delta)=\dim(X)-1$}

In this section, we collect some definitions and facts about projective varieties of dimension $n$ and Kodaira dimension $n-1$.

\begin{remark} \label{remark Iitaka}
Let $(X,\Delta)$ be a minimal projective klt pair of dimension $n$ with $\kappa(X,\Delta)=n-1$ and rational coefficients.
In view of Theorem \ref{good mm} and \cite{Lai11}*{Proposition 2.4}, the Iitaka fibration of $(X,\Delta)$ is a morphism $f \colon X \rar Y$.
Furthermore, we have that $\K X. + \Delta \sim \subs \qq. f^*L$, where $L$ is an ample $\qq$-Cartier divisor.
Then, by the canonical bundle formula recalled in \S~\ref{section_cbf}, $f$ induces a generalized pair $(Y,B_Y,\bM.)$ such that $\K Y. + B_Y + \bM Y. \sim_\qq L$.
Furthermore, by \cite{FM00} (see also \cite{PS09}*{Theorem 8.1} and \cite{HMX14}), we may choose $0 \leq \Delta_Y \sim_\qq \bM Y.$ so that $(Y,B_Y+\Delta_Y)$ is klt, and the coefficients of $B_Y+\Delta_Y$ belong to a DCC set of rational numbers only depending on $n$ and $\mathrm{coeff}(\Delta)$.
In this setup, we claim that the discrepancies of $(Y,B_Y+\Delta_Y)$ are a lower bound for the generalized discrepancies of $(Y,B_Y,\bM.)$.
Indeed, let $\pi \colon Y' \rar Y$ be a log resolution of $(Y,B_Y)$ where $\bM.$ descends.
We write $\K Y.+B_{Y'}+\bM Y'.=\pi^*(K_Y+B_Y+\bM Y.)$.
As $\bM.$ descends on $Y$, the generalized discrepancies of $(Y,B_Y,\bM.)$ coincide with the discrepancies of the sub-pair $(Y',B_{Y'})$.
Then, to define $\Delta_Y$, by \cite{PS09}*{Theorem 8.1}, we choose a suitable effective divisor $0\leq \Delta_{Y'} \sim_{\qq} \bM Y'.$ and set $\Delta_Y \coloneqq \pi_*\Delta_{Y'}$.
Thus, the discrepancies of $(Y,B_Y+\Delta_Y)$ coincide with the discrepancies of $(Y',B_{Y'}+\Delta_{Y'})$, and the claim follows as $B_{Y'} \leq B_{Y'}+\Delta_{Y'}$.
\end{remark}

\begin{definition} \label{def volume}
Let $(X,\Delta)$ be a minimal projective klt pair of dimension $n$ with $\kappa(X,\Delta)=n-1$ and rational coefficients.
Let $Y$ and $L$ be as in Remark \ref{remark Iitaka}.
Then, we define $\vol \subs n-1.(X,\K X. + \Delta) \coloneqq \vol (Y,L)$.
\end{definition}

\begin{definition}
Fix positive integers $n,d$, a positive real number $v$, and a finite set $\mathcal{R} \subset \qq \cap [0,1]$.
We define $\mathfrak{D}(n,v,\Phi(\mathcal{R}))$ to be the set of minimal projective klt pairs $(X,\Delta)$ of dimension $n$ with $\kappa(X,\Delta)=n-1$, $\vol \subs n-1.(X,\K X.+\Delta)=v$, and $\coeff(\Delta) \subset \Phi(\mathcal R)$.
Then, we define $\mathfrak D (n,v,\Phi(\mathcal R), d) \subset \mathfrak D (n,v, \Phi(\mathcal{R}))$ to be the subset consisting of those $(X,\Delta) \in \mathfrak D (n,v,\Phi(\mathcal R))$ whose Iitaka fibration admits a multisection of degree $d$.
We define $\mathfrak{B}(n,v,\Phi(\mathcal R))$ to be the set of pairs $(Y,B_Y+\Delta_Y)$ arising as bases of the Iitaka fibration of the elements of $\mathfrak{D}(n,v,\Phi(\mathcal R))$ (i.e., $Y=\mathrm{Proj}(\oplus_{l \geq 0} \Gamma(X,\mathcal{O}_X(l(K_X+\Delta))))$).
Here, we choose $\Delta_Y$ as in Remark \ref{remark Iitaka}.
\end{definition}

\begin{proposition} \label{prop bdd base}
The set of pairs $\mathfrak{B}(n,v,\Phi(\mathcal{R}))$ is bounded.
\end{proposition}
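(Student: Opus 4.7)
The plan is to apply the boundedness of klt pairs of general type with fixed volume and DCC coefficients, which is essentially \cite{HMX18}. By Remark \ref{remark Iitaka}, for every $(X,\Delta) \in \mathfrak{D}(n,v,\Phi(\mathcal{R}))$ the Iitaka fibration produces a generalized pair $(Y,B_Y,\bM.)$ with $\dim(Y)=n-1$ and
\[
\K Y. + B_Y + \bM Y. \sim_\qq L,
\]
where $L$ is an ample $\qq$-Cartier divisor, and the divisor $\Delta_Y$ of the remark satisfies $\Delta_Y \sim_\qq \bM Y.$ together with $\Delta_Y \geq 0$. Hence $\K Y. + B_Y + \Delta_Y \sim_\qq L$ is ample, and by Definition \ref{def volume} its volume equals $v$.

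Next I would observe that the pair $(Y,B_Y+\Delta_Y)$ is klt with $\coeff(B_Y+\Delta_Y)$ contained in a DCC set $\mathcal{S}\subset \qq\cap [0,1]$ depending only on $n$ and $\Phi(\mathcal{R})$. Both facts are contained in Remark \ref{remark Iitaka}: the klt property is explicit, while the DCC property follows from \cite{FM00}, \cite{PS09}*{Theorem 8.1} and \cite{HMX14} applied to the canonical bundle formula, together with the fact that $\coeff(\Delta)\subset \Phi(\mathcal{R})$ is itself DCC with unique accumulation point $1$.

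Thus each $(Y,B_Y+\Delta_Y)\in \mathfrak{B}(n,v,\Phi(\mathcal R))$ is a klt log canonical model of general type of dimension $n-1$, with ample $\K Y.+B_Y+\Delta_Y$ of fixed volume $v$, and with coefficients in a fixed DCC set $\mathcal{S}$ that does not depend on the chosen pair. Boundedness of this class of pairs is precisely the main theorem of \cite{HMX18}, so $\mathfrak{B}(n,v,\Phi(\mathcal{R}))$ is bounded. There is no serious obstacle: the only points requiring care are verifying that $\Delta_Y$ can be chosen to keep the pair klt (handled by the cited results on the moduli part of the canonical bundle formula) and that the coefficients of $B_Y+\Delta_Y$ fall in a set depending only on $n$ and $\Phi(\mathcal{R})$, not on the individual $(X,\Delta)$.
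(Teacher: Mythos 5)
Your proof is correct and takes essentially the same route as the paper: read off from Remark \ref{remark Iitaka} that $(Y,B_Y+\Delta_Y)$ is a klt pair of general type with $\K Y.+B_Y+\Delta_Y$ ample of volume $v$ and coefficients in a DCC set depending only on $n$ and $\mathcal R$, then invoke \cite{HMX18}*{Theorem 1.1}. The paper's own argument is a terser version of exactly this.
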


\begin{proof}
Fix $(Y,B_Y+\Delta_Y) \in \mathfrak{B}(n,v,\Phi(\mathcal R))$.
Then, by definition of $\mathfrak{B}(n,v,\Phi (\mathcal R))$, we have that $\K Y. + B_Y + \Delta_Y$ is ample, and $\vol(Y,\K Y. + B_Y + \Delta_Y)=v$.
Since the coefficients of $B_Y+\Delta_Y$ belong to a DCC set that only depends on $n$ and $\mathcal R$, the claim follows from \cite{HMX18}*{Theorem 1.1}.
\end{proof}

\begin{corollary} \label{cor bound sings}
Fix positive integers $n$ and $d$, a positive real number $v$, and a finite set $\mathcal R \subset \qq \cap [0.1]$.
Then, there exists $\epsilon > 0$ only depending on $n$, $d$, $v$ and $\mathcal{R}$ such that, if $(X,\Delta) \in \mathfrak{D}(n,v,\Phi(\mathcal R))$, it is $\epsilon$-log canonical.
\end{corollary}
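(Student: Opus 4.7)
The plan is to bootstrap from the boundedness of the base of the Iitaka fibration (Proposition~\ref{prop bdd base}) to a uniform lower bound on log discrepancies of $(X,\Delta)$. First, I would observe that since $\mathfrak B(n,v,\Phi(\mathcal R))$ is bounded by Proposition~\ref{prop bdd base}, there is $\delta > 0$, depending only on $n$, $v$ and $\mathcal R$, such that every $(Y, B_Y + \Delta_Y) \in \mathfrak B$ is $\delta$-klt. This follows from a Noetherian argument on a bounding family: log discrepancies of fibers can be read off from the multiplicities of divisors on a single log resolution of the total space, so they are uniformly bounded away from zero. Since $0 \le \Delta_Y \sim_\qq \bM Y.$, one then deduces that the generalized pair $(Y, B_Y, \bM.)$ is $\delta$-generalized klt.

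Next, I would split prime divisorial valuations $E$ over $X$ into horizontal and vertical ones with respect to $f \colon X \rar Y$ and bound each case separately. For horizontal $E$, restriction to the general fiber identifies $a_E(X,\Delta)$ with a log discrepancy of $(F, \Delta|_F)$, where $F$ is either a smooth elliptic curve or $\pr 1.$. The identity $\K F. + \Delta|_F \sim_\qq 0$ forces $\deg(\Delta|_F) \in \{0, 2\}$; combined with $\coeff(\Delta|_F) \subset \Phi(\mathcal R)$ and the fact that $\Phi(\mathcal R)$ accumulates only at $1$, this restricts $(F, \Delta|_F)$ to a set with uniformly bounded singularities, so all such pairs are $\eta$-klt for some $\eta > 0$ depending only on $\mathcal R$. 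For vertical $E$, I would use the canonical bundle formula identity $\K X. + \Delta \sim_\qq f^*(\K Y. + B_Y + \bM Y.)$ to compare $a_E(X,\Delta)$ with the generalized log discrepancy of a divisor over $Y$ centered at $f(c_X(E))$; by the previous step this is at least $\delta$. Setting $\epsilon \coloneqq \min(\delta, \eta)$ then gives the desired conclusion.

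The main technical obstacle is making the vertical comparison rigorous. The cleanest route is via a crepant-descent statement for generalized log discrepancies under lc-trivial fibrations, in the spirit of Ambro and Fujino--Gongyo. An alternative is to exploit the effective choice $\K X. + \Delta \sim_\qq f^*(\K Y. + B_Y + \Delta_Y)$ available from Remark~\ref{remark Iitaka}, reducing to a pullback computation on a common log resolution of $f$ and invoking the negativity lemma together with the $\delta$-klt bound on $(Y, B_Y + \Delta_Y)$. Both approaches hinge on the fibers being curves, which keeps the contribution from horizontal adjunction and the structure of $\Delta_Y$ sufficiently transparent to yield a single uniform constant $\epsilon$ depending only on $n$, $v$, and $\mathcal R$.
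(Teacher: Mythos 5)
Your proposal takes essentially the same route as the paper. Both proofs bootstrap from Proposition~\ref{prop bdd base} to get a uniform $\delta$-klt bound on $(Y, B_Y+\Delta_Y)$, transfer this to the generalized pair $(Y, B_Y, \bM.)$, split valuations over $X$ into horizontal and vertical, control the horizontal ones using the one-dimensional general fiber, and handle the vertical ones via an adjunction statement for lc-trivial fibrations. The ``crepant-descent statement in the spirit of Ambro'' you allude to is exactly what the paper invokes, namely \cite{Amb99}*{Proposition 3.4}, and it cites it as a black box rather than reconstructing the negativity-lemma computation you sketch as an alternative. If anything, your treatment of the horizontal case (restriction to the general fiber plus the fixed degree $\deg(\Delta|_F) \in \{0,2\}$ forcing a finite set of coefficient configurations by DCC/global ACC) is more explicit than the paper's terse remark that one may assume $\epsilon < \tfrac12 = \min \Phi(\mathcal R)$, which as written doesn't directly address coefficients of $\Delta|_F$ accumulating near $1$; the content is the same, but you spell out why those coefficients are in fact bounded away from $1$.
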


\begin{proof}
Fix $(X,\Delta) \in \mathfrak{D}(n,v,\Phi(\mathcal{R}))$, and let $f \colon X \rar Y$ be its Iitaka fibration.
Let $(Y,B_Y,\bM.)$ and $(Y,B_Y+\Delta_Y)$ be the generalized pair and pair induced on $Y$, respectively.
Since $\mathfrak{B}(n,v,\Phi(\mathcal R))$ is a bounded set of klt pairs with coefficients in a finite set, and since the generalized discrepancies of $(Y,B_Y,\bM.)$ are bounded below by the discrepancies of $(Y,B_Y+\Delta_Y)$, there exist $\epsilon>0$ only depending on $n$, $d$, $v$ and $\mathcal{R}$ such that $(Y,B_Y,\bM.)$ is generalized $\epsilon$-log canonical.
By assumption, the generic fiber is a smooth log Calabi--Yau curve whose boundary has coefficients in $\Phi(\mathcal{R})$.
Thus, by global ACC \cite{HMX14}, the horizontal part of $\Delta$ can only attain finitely many values.
Let $1-\delta$ denote the maximum of such values.
Notice that we have $0 < \delta \leq 1$, as $(X,\Delta)$ is klt.
Then, we may assume that $\epsilon < \delta$.
Thus, $(X,\Delta)$ is $\epsilon$-log canonical over the generic point of $Y$.
Indeed, as $f$ has relative dimension 1, the generic fiber of $f$ is smooth and in a neighborhood of it the singularities of $(X,\Delta)$ are controlled by the coefficients of $\Delta$.
In particular, over a non-empty open subset of $Y$, $(X,\Delta)$ is $\delta$-log canonical.
Therefore, in order to bound the singularities of $(X,\Delta)$, it suffices to bound the discrepancies of those geometric valuations over $X$ whose centers on $X$ are vertical over $Y$.
By \cite{Amb99}*{Proposition 3.4}, $(X,\Delta)$ is $\epsilon$-log canonical.
Notice that we can apply \cite{Amb99}*{Proposition 3.4} to the model $Y$ even though $\bM.$ does not descend on $Y$, since we are considering the generalized log discrepancies of $(Y,B_Y,\bM.)$.
\end{proof}

\begin{definition}
Fix positive integers $n$ and $d$, a positive real number $v$, and a finite set $\mathcal R \subset \qq \cap [0,1]$.
We define $\mathfrak{C}(n,v,\Phi(\mathcal R),d)$ to be the set of fibrations $(X,\Delta) \rar (Y,B_Y+\Delta_Y)$, where $(X,\Delta) \in \mathfrak{D}(n,v,\Phi(\mathcal{R}),d)$ and $(Y,B_Y+\Delta_Y)$ is the corresponding element in $\mathfrak{B}(n,v,\Phi(\mathcal R))$.
\end{definition}

\section{Iitaka fibration of Fano type}

In this section, we prove a stronger version of Theorem \ref{thm intro} under the additional assumption that the general fiber of the Iitaka fibration is $\pr 1.$.

\begin{theorem} \label{theorem fano case}
Let $\mathfrak F (n,v,\Phi(\mathcal{R}))$ be the subset of $\bigcup_{d \in \nn} \mathfrak{C}(n,v,\Phi(\mathcal{R}),d)$ consisting of those fibrations whose general fiber is $\pr 1.$.
Then, $\mathfrak F (n,v,\Phi(\mathcal{R}))$ is bounded.
\end{theorem}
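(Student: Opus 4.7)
The plan is to convert the Iitaka fibration into a Fano-type fibration over $Y$ and then invoke Birkar's boundedness theorem for such fibrations over a bounded base. Two ingredients are already in place: by Proposition~\ref{prop bdd base} the set of bases $(Y,B_Y+\Delta_Y)$ of the Iitaka fibrations lies in a log bounded family, and by Corollary~\ref{cor bound sings} there is a uniform $\epsilon>0$ (depending only on $n$, $v$, $\mathcal R$) so that every $(X,\Delta)\in\mathfrak F$ is $\epsilon$-klt. Moreover, because the general fiber $F\cong\pr 1.$ and $K_X+\Delta\sim_{\qq,f}0$, the restriction $\Delta|_F$ has degree $2$, so $\Delta$ has a non-trivial horizontal part.

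To produce the Fano fibration, I would fix a rational $0<t\ll 1$, uniform in $\epsilon$ and $\Phi(\mathcal R)$, so that $(X,(1-t)\Delta)$ remains $\tfrac{\epsilon}{2}$-klt. Then $-(K_X+(1-t)\Delta)\sim_{\qq,f}t\Delta$ is relatively big over $Y$, and by \cite{BCHM} a relative $(K_X+(1-t)\Delta)$-MMP with scaling over $Y$ terminates with a Mori fiber space $(X',(1-t)\Delta')\rar Y$ of relative Picard rank one, in which $-(K_{X'}+(1-t)\Delta')$ is relatively ample, the singularities are $\tfrac{\epsilon}{2}$-klt, and the coefficients of $\Delta'$ still lie in $\Phi(\mathcal R)$ (after possibly replacing $Y$ by a bounded birational modification, which does not affect boundedness of the base). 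I would then apply Birkar's boundedness theorem for Fano type fibrations \cite{Bir18} to this Mori fiber space: the base $Y$ is bounded, the relative dimension is one, the singularities and coefficients are uniformly controlled, and the volume $v$ supplies the required polarization through the canonical bundle formula on $Y$. This gives that $(X',\Delta')$ together with the morphism to $Y$ lies in a bounded family.

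Finally, I would recover $(X,\Delta)$ from $(X',\Delta')$: the birational map $X\drar X'$ is a finite composition of $(K_X+(1-t)\Delta)$-MMP steps relative to $Y$, in which each divisorial contraction contracts a component of $\Delta$ whose coefficient lies in the finite set $\Phi(\mathcal R)$, and each flip is performed over $Y$. Because $(X,\Delta)$ is minimal with $K_X+\Delta\sim_{\qq,Y}0$, these inverse operations on the bounded family of $(X',\Delta')$ yield only finitely many possibilities for $(X,\Delta)$; here the $\pr 1.$-fiber case is strictly more rigid than the elliptic one, which is what permits full boundedness rather than boundedness only in codimension one. The main obstacle is the precise invocation of Birkar's theorem: one must verify that the statement available in \cite{Bir18} covers a Fano fibration over a log bounded generalized-pair base with polarization determined by $v$, and that it yields boundedness of the total space \emph{together with} the contraction to $Y$, not just of the underlying variety.
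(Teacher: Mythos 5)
Your two starting points are exactly those of the paper: Proposition~\ref{prop bdd base} for boundedness of the bases $(Y,B_Y+\Delta_Y)$, and Corollary~\ref{cor bound sings} for a uniform $\epsilon$ with which all $(X,\Delta)\in\mathfrak F$ are $\epsilon$-log canonical. After that, however, you take a detour through a Mori fiber space, and it is precisely this detour that introduces a genuine gap.

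The MMP step is unnecessary: \cite{Bir18}*{Theorem 1.3} applies directly to the contraction $f\colon(X,\Delta)\rar Y$ because $(X,\Delta)$ is $\epsilon$-lc, $K_X+\Delta\sim_{\qq,f}0$, $X$ is of Fano type over $Y$ (the horizontal part of $\Delta$ is relatively big since the general fiber is $\pr 1.$), and $Y$ comes equipped with a polarization of bounded degree from the boundedness of $(Y,B_Y+\Delta_Y)$. This already yields boundedness of the set of pairs $(X,\Delta)$ themselves without passing to a Mori fiber space. By contracting to $(X',\Delta')$ and then trying to go back, you replace a clean application of Birkar's theorem with an inversion problem that has no canonical, finite-type answer: the claim that ``inverse operations yield only finitely many possibilities'' is not an argument, as there is no reason for the collection of klt pairs with $K+\Delta$ nef over $Y$ that map to a given Mori fiber space to be finite, and no mechanism to put them in a bounded family. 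Your intermediate claim that each divisorial contraction contracts a component of $\Delta$ is also not justified (a $-\Delta$-negative contracted divisor need only meet $\Delta$ positively, not be contained in its support).

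What is actually missing is the step the paper uses to upgrade boundedness of the pairs $(X,\Delta)$ to boundedness of the fibration. After Birkar's theorem gives a bounding family $(\mathcal X,\mathcal B)\rar T$, one must stratify $T$ so that $\mathcal B$ has the correct coefficients fiberwise and so that a log resolution of $(\mathcal X,\mathcal B)$ restricts to log resolutions of the fibers; this allows deformation invariance of plurigenera \cite{HMX18b}*{Theorem 1.9.2} to apply, so that the relative ample (Iitaka) model of $(\mathcal X,\mathcal B)/T$ induces the Iitaka fibration of each fiber, producing the bounding family $\mathcal X\rar\mathcal Y\rar T$ for the fibrations. Boundedness of the boundary data on $\mathcal Y$ is then recovered via Corollary~\ref{stratify cbf}. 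None of this appears in your sketch, and the concern you flag at the end (whether one bounds the fibration, not just the total space) is answered precisely by this deformation-invariance argument, not by re-running the MMP in reverse.
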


\begin{proof}
By Proposition \ref{prop bdd base}, the set of pairs $(Y,B_Y+\Delta_Y)$ is bounded.
By Corollary \ref{cor bound sings}, $(X,\Delta)$ is $\epsilon$-log canonical for a fixed $\epsilon > 0$.
Therefore, by \cite{Bir18}*{Theorem 1.3}, the set of pairs $(X,\Delta)$ is bounded.
Let $(\mathcal{X},\mathcal{B}) \rar T$ be a bounding family.
We recall that the notions of boundedness introduced in \S~\ref{sec_bddness} only control the support of the boundary divisor $\Delta$ and that the divisor $\mathcal{B}$ is reduced.
Yet, if we want to retrieve the Iitaka fibration of the pairs $(X,\Delta)$, we need to put the appropriate coefficients to $\mathcal{X}$ to retrieve each $(X,\Delta)$, rather than $(X,\Supp(\Delta))$.
A priori, the coefficients of $\Delta$ belong to the countable set $\Phi(\mathcal{R})$.
If we argue that they actually belong to a finite subset of $\Phi(\mathcal{R})$, then, by considering finitely many copies of $(\mathcal{X},\mathcal{B}) \rar T$, we can assign to the components of $\mathcal{B}$ all the possible coefficients.
But this is indeed the case.
By global ACC \cite{HMX14}, the coefficients of the horizontal part of $\Delta$ belong to a finite subset of $\Phi(\mathcal{R})$.
Then, by Proposition~\ref{prop bdd base} (note that the possible coefficients appearing therein are finitely many by \cite{HMX18}*{Theorem 1.1}) and the algorithm to compute the boundary divisor in the canonical bundle formula, it readily follows that also the vertical components of $\Delta$ can attain finitely many coefficients.
Thus, up to replacing $(\mathcal{X},\mathcal{B}) \rar T$ with finitely many copies of itself and assigning coefficients to $\mathcal{B}$, we may assume that for every $(X,\Delta)$ there is a closed point $t \in T$ such that $(\mathcal{X}_t,\mathcal{B}_t) \cong (X,\Delta)$.
Furthermore, we may stratify $T$ so that $T$ is smooth and there exists a log resolution of $(\mathcal{X},\mathcal{B})$ that induces a log resolution fiberwise.
In particular, this guarantees that deformation invariance of plurigenera applies  \cite{HMX18b}*{Theorem 1.9.2}.
Thus, the relative ample model of $(\mathcal{X},\mathcal{B}) \rar T$ induces the ample model of each fiber; see, e.g., \cite{FS22}*{Theorem 4.5}.
Call the relative ample model $\mathcal{Y}$.
Thus, the claim of the theorem follows, besides the boundedness of the boundary divisors.
The latter is recovered by Corollary \ref{stratify cbf}.
\end{proof}

\section{Elliptic Iitaka fibration} \label{section elliptic}

Recall that $\mathfrak{D}(n,v,\Phi(\mathcal{R}))$ is the set of minimal projective klt pairs $(X,\Delta)$ of dimension $n$ with coefficients in $\Phi(\mathcal{R})$ and $\vol_{n-1}(X,\Delta)=v$.
In this section, we will only consider elements of $\mathfrak{D}(n,v,\Phi(\mathcal{R}))$ whose Iitaka fibration is an elliptic fibration.
Now, consider $(X,\Delta) \in \mathfrak{D}(n,v,\Phi(\mathcal{R}))$, and let $f \colon X \rar Y$ be its Iitaka fibration.
Then, we may consider the corresponding Jacobian fibration $J(X) \rar Y$.
This fibration is defined up to birational equivalence over $Y$, and the generic fiber is $\mathrm{Pic}^0(X_\eta)$, where $X_\eta$ denotes the generic fiber of $f$.
Indeed, in general $J(X)$ is a projective variety over $Y$ having $\mathrm{Pic}^0(X_\eta)$ as generic fiber of the morphism $J(X) \rar Y$.
Then, depending on the context, some additional assumptions, such as smoothness or minimality over $Y$, may be required.
Notice that neither smoothness nor relative minimality lead to a unique choice of representative of this birational class.
For our purposes, we will proceed as follows.
Assume that $J(X)$ is smooth and admits a contraction to $Y$.
Then, by \cite{HX13}*{Theorem 1.1}, $J(X)$ admits a relative good minimal model over $Y$.
Replace $J(X)$ with this model, and let $Y' \rar Y$ be the relative ample model over $Y$.
Then, we may assume that $J(X)$ is terminal, $\qq$-factorial, and that $\K J(X). \sim \subs \qq,Y'. 0$.
Let $(Y,B_Y,\bM.)$ be the generalized pair induced by $X$ on $Y$, and let $(Y',B_{Y'},\bM.)$ denote its trace on $Y'$.
Then, by Lemma \ref{lemma cbf jacobian}, $B_{Y'}$ is effective.
Notice that, as $(Y,B_Y+\Delta_Y)$ is bounded and $\Delta_Y$ can be chosen generically with bounded denominator \cite{PS09}*{Theorem 8.1}, it follows that the coefficients of $B_{Y'}$ belong to a finite set only depending on $\mathfrak{D}(n,v,\Phi(\mathcal R))$.
Thus, it follows by \cite{MST} that $(Y',B_{Y'}+\Delta_{Y'})$ is bounded with coefficients in a finite set.
Then, by Lemma \ref{lemma cbf jacobian}, we have that $J(X)$ induces a generalized pair $(Y',C_{Y'},\bM.)$, where $0 \leq C_{Y'} \leq B_{Y'}$.
By \cite{FM00}, the coefficients of $C_{Y'}$ belong to a DCC set whose only accumulation point is 1.
As they are bounded away from 1 by the coefficients of $B_{Y'}$, it follows that $\coeff(C_{Y'})$ varies in a finite set only depending on $\mathfrak{D}(n,v,\Phi(\mathcal R))$.

\begin{definition} \label{def fibration J}
Fix positive integers $n$ and $d$, a positive real number $v$, and a finite set of rational numbers $\mathcal{R} \subset \qq \cap [0,1]$.
We define $\mathfrak{J}(n,v,\Phi(\mathcal R))$ to be the set of Jacobian fibrations constructed above.
That is, $J(X) \in \mathfrak{J}(n,v,\Phi(\mathcal R))$ is a terminal, $\qq$-factorial model of the Jacobian fibration of $(X,\Delta) \in \mathfrak{D}(n,v,\Phi(\mathcal R))$.
Furthermore, $J(X)$ is relatively minimal over $Y$, the base of the Iitaka fibration of $X$, with relative ample model $Y' \rar Y$.
Then, we define $\mathfrak{P}(n,v,\Phi(\mathcal R))$ to be the set of fibrations $J(X) \rar (Y',B_{Y'}+\Delta_{Y'})$, where $J(X) \in \mathfrak{J}(n,v,\Phi(\mathcal R))$ and $(Y',B_{Y'}+\Delta_{Y'})$ is the trace on $Y'$ of the corresponding element in $\mathfrak{B}(n,v,\Phi(\mathcal R))$.
\end{definition}

\begin{theorem} \label{main thm}
Fix positive integers $n$ and $d$, a positive real number $v$, and a finite set of rational numbers $\mathcal{R} \subset \qq \cap [0,1]$.
The subset of $\mathfrak{C}(n,v,\Phi(\mathcal R),d)$ consisting of elliptic fibrations is bounded in codimension 1.
Let $(\mathcal{X},\mathcal{D})$, $(\mathcal{Y},\mathcal{D})$, and $T$ be varieties bounding these fibrations in codimension 1 as in Definition~\ref{def_bdd_fibrations}.
Then, $(\mathcal{Y},\mathcal{D}) \rar T$ bounds (i.e., not just birationally in codimension 1) the set of bases of the fibrations of interest.
\end{theorem}

\begin{remark}
Notice that, if $n=2$, it follows that the family of elliptic fibrations in Theorem \ref{main thm} is actually bounded.
Indeed, two normal projective surfaces that are isomorphic in codimension 1 are actually isomorphic.
\end{remark}

\begin{proposition} \label{bound cycles}
Fix positive integers $n$ and $d$, a positive real number $v$, and a finite set of rational numbers $\mathcal{R} \subset \qq \cap [0,1]$.
Assume that $n \geq 3$.
Assume that Theorem \ref{main thm} holds for $\mathfrak{C}(m,w,\Phi(\mathcal{S}),e)$, where $m \leq n-1$, $w$, $e$ and the finite set $\mathcal S \subset \qq \cap [0,1]$ are arbitrary.
Then, there exists a positive integer $C$, only depending on $\mathfrak{C}(n,v,\Phi(\mathcal R),d)$, such that, for every $(X,\Delta) \rar (Y,B_Y+\Delta_Y) \in \mathfrak{C}(n,v,\Phi(\mathcal R),d)$, the linear series $|C(\K Y. + B_Y + \Delta_Y)|$ is very ample and there exists an element $D \in |C(\K Y. + B_Y + \Delta_Y)|$ such that $Y_{\mathrm{sing}}\subset \Supp(D)$ and $X \rar Y$ is smooth over $Y \setminus \Supp(D)$, except possibly at finitely many isolated points.
\end{proposition}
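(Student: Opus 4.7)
The plan is to combine Proposition \ref{prop bdd base} with an induction on $n$ via general hyperplane sections, applying Theorem \ref{main thm} in dimension $n-1$ to bound the divisorial part of the non-smooth locus of $f$.

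Let $L \coloneqq \K Y. + B_Y + \Delta_Y$. By Proposition \ref{prop bdd base}, the set $\mathfrak{B}(n, v, \Phi(\mathcal{R}))$ is bounded, so there is a uniform constant $C_0$ such that $C_0 L$ is very ample on every such $Y$. Since $(Y, B_Y + \Delta_Y)$ is klt, $Y_{\mathrm{sing}}$ has codimension at least two in $Y$. After stratifying the bounding family, the loci $Y_{\mathrm{sing}}$ form a flat family of closed subschemes, and uniform Castelnuovo--Mumford regularity in this polarized family yields a uniform $C_1$ such that some element of $|C_1 L|$ contains $Y_{\mathrm{sing}}$.

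The core of the argument is to bound the discriminant divisor $\Delta_f \subset Y$, namely the codimension-one part of the locus over which $f$ is not smooth. I would take a general $H \in |C_0 L|$; since $Y$ is smooth in codimension one, Bertini ensures $H$ is smooth. Set $X_H \coloneqq f^{-1}(H)$ and $f_H \colon X_H \rar H$. Then $(X_H, \Delta|_{X_H})$ is klt by Bertini for klt pairs, it is minimal since $\K X_H. + \Delta|_{X_H} \sim_\qq f_H^*((L+H)|_H)$ is semi-ample, it has Iitaka dimension $n-2 = \dim H$, and $f_H$ is its Iitaka fibration. Moreover, a degree-$d$ multisection of $f$ restricts to a degree-$d$ multisection of $f_H$, and $\vol_{n-2}(X_H, \K X_H. + \Delta|_{X_H}) = ((L+H)|_H)^{n-2} = C_0(1+C_0)^{n-2} v$. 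Hence $(X_H, \Delta|_{X_H}) \rar H$ lies in some $\mathfrak{Q}(n-1, v', \Phi(\mathcal{R}), d)$ with $v'$ depending only on $n$, $v$, and $C_0$, and by the inductive hypothesis this family is bounded in codimension one. In particular, the discriminant divisor $\Delta_{f_H} \subset H$ lies in a bounded family of divisors; since $\Delta_{f_H}$ agrees with the cycle-theoretic restriction of $\Delta_f$ to a sufficiently general $H$, this produces a uniform bound on the intersection number $\Delta_f \cdot L^{n-2}$, and hence on the degree of each component of $\Delta_f$ within the bounded family of polarized $(Y, L)$.

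With $Y_{\mathrm{sing}} \cup \Supp(\Delta_f)$ of uniformly bounded complexity in the bounded family $(Y, L)$, uniform Castelnuovo--Mumford regularity yields a uniform $C \geq \max\{C_0, C_1\}$ and some $D \in |CL|$ whose support contains $Y_{\mathrm{sing}} \cup \Supp(\Delta_f)$; by construction, $f$ is smooth over $Y \setminus \Supp(D)$ except possibly at isolated points coming from higher-codimension singular fibers. The hard part I expect is the induction step: verifying carefully that the restriction $(X_H, \Delta|_{X_H}) \rar H$ genuinely belongs to $\mathfrak{Q}(n-1, v', \Phi(\mathcal{R}), d)$ (restriction of a minimal klt pair along a very general ample divisor, with its Iitaka fibration unchanged) and translating codimension-one boundedness of $f_H$ back into an effective intersection-theoretic bound on $\Delta_f$ that is uniform across the original bounded family of bases.
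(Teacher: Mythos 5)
Your strategy is essentially the paper's: restrict $f$ to a general hyperplane section $H \in |C_0 L|$, verify that $f_H\colon X_H \to H$ again sits in some $\mathfrak{Q}(n-1, v', \Phi(\mathcal{R}), d)$, invoke the inductive boundedness in dimension $n-1$, and translate that into a degree bound on the bad locus inside the bounded family of polarized bases. Where you fall short is in what you choose to bound.

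You bound only the \emph{discriminant divisor} $\Delta_f$, i.e.\ the codimension-one part of the non-smooth locus, together with $Y_{\mathrm{sing}}$. But the conclusion requires that $f$ be smooth over $Y \setminus \Supp(D)$ \emph{except at isolated points}, so $\Supp(D)$ must absorb \emph{all} positive-dimensional irreducible components of the non-smooth locus, not just the divisorial ones. When $n=3$ (so $\dim Y = 2$) these coincide, but once $n \geq 4$ the non-smooth locus of $f$ over $Y_{\mathrm{sm}}$ can perfectly well have components of codimension $\geq 2$ and positive dimension, which your $Y_{\mathrm{sing}} \cup \Supp(\Delta_f)$ does not control; your final sentence asserting that the leftover singular fibers are isolated is an unjustified leap. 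The paper's proof handles this by letting $\Sigma_Y$ denote the closure of the full non-smooth-fibration locus, picking an arbitrary irreducible component $\Omega$ of dimension $l \geq 1$, and showing $\Omega \cdot A^{l} = (C')^{l-1}\, \Omega_H \cdot (K_H + B_H + \Delta_H)^{l-1}$ is bounded by the flatness and intersection-number computation (equation \eqref{equation_intersections}) carried out on the bounding family in dimension $n-1$. This works uniformly in $l$ and is what you are missing. A secondary point: the identification $\Sigma_H = \Sigma_Y \cap H$ for general $H$ and the matching of the generalized pair structure on $H$ with the restriction from $Y$ (via \cite{Flo14}*{Lemma 3.1}, \cite{Kol13}*{Proposition 4.5}, Lemmas \ref{lemma same smooth fibers 1} and \ref{lemma same smooth fibers 2}) require genuine care, as you yourself flag; the paper spends real effort on this, including the flatness and smoothness-along-$H$ criteria over the locus of one-dimensional fibers, and it should not be taken for granted.
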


\begin{proof}
By Proposition \ref{prop bdd base}, $\mathfrak{B}(n,v,\Phi(\mathcal R))$ is bounded.
Fix $f \colon (X,\Delta) \rar (Y,B_Y+\Delta_Y) \in \mathfrak{C}(n,d,\Phi(\mathcal R),v)$.
By Proposition \ref{prop bdd base}, there is a positive integer $C'$, only depending on $n$ and $v$, such that $C'(\K Y. + B_Y + \Delta_Y)$ is a very ample Cartier divisor.
Pick a general $H \in |C'(\K Y. + B_Y + \Delta_Y)|$, and define $X_H \coloneqq f^* H$, and let $(X_H,\Delta_{X_H})$ be the pair obtained by adjunction.
As $H$ is general, it follows that $\coeff(\Delta_{X_H}) \subset \Phi(\mathcal R)$.
By adjunction, we define $(H,B_H+\Delta_H)$ and $(H,B_H,\bN.)$, where $\bN. \coloneqq \bM.|_H$.
Notice that $\Delta_H \sim_\qq \bN H.$.
Then, arguing as in \cite{Flo14}*{Lemma 3.1} (notice that the smoothness assumptions in the reference are not needed, and it suffices that $H$ is chosen generically in a free linear series, as the computations can be done on a log resolution of $Y$ and then pushed forward to $Y$ and $H$), it follows that $(H,B_H,\bN.)$ is the generalized pair induced by $X_H \rar H$ via the canonical bundle formula.
Furthermore, $(X_H,\Delta_{X_H}) \rar (H,B_H+\Delta_H) \in \mathfrak{C}(n-1,w,\Phi(\mathcal R),d)$, where $w$ only depends on $\mathfrak{C}(n,v,\Phi(\mathcal R),d)$.
Notice that in this argument we are using the assumptions on the singularities of $X$ and $H$ to control the behavior of the different \cite{Kol13}*{Proposition 4.5}.
\newline
Let $Y_\mathrm{sing}$ and $H_\mathrm{sing}$ denote the singular loci of $Y$ and $H$, respectively.
Then, by Bertini's theorem, we have $H_\mathrm{sing}=Y_\mathrm{sing} \cap H$.
Define $Y_\mathrm{sm} \coloneqq Y \setminus Y_\mathrm{sing}$ and $H_\mathrm{sm} \coloneqq H \setminus H_\mathrm{sing}$.
Let $\Sigma \subs Y_\mathrm{sm}.$ denote the minimal closed subset of $Y_{\mathrm{sm}}$ such that $X \rar Y$ is a smooth fibration with smooth base over $Y_\mathrm{sm} \setminus \Sigma \subs Y_\mathrm{sm}.$.
Let $\Sigma_Y$ denote the closure of $\Sigma \subs Y_\mathrm{sm}.$ in $Y$.
Since $H$ is general and $\Sigma \subs Y.$ is independent of the choice of $H$, we may assume that $H$ meets properly every irreducible component of $\Sigma_Y$.
Then, we let $\Sigma \subs H_\mathrm{sm}.$ denote the minimal closed subset of $H_{\mathrm{sm}}$ such that $X_H \rar H$ is a smooth fibration with smooth base over $H_\mathrm{sm} \setminus \Sigma \subs H_\mathrm{sm}.$, and we let $\Sigma_H$ denote its closure in $H$.
By construction, we have $\Sigma \subs H_\mathrm{sm}.=\Sigma \subs Y_\mathrm{sm}. \cap H_\mathrm{sm}$.
This is clear over the locus where the fibers are not 1-dimensional.
Then, over the locus in $Y_{\mathrm{sm}}$ where the fibers are 1-dimensional, the morphism is flat by \cite{Sta}*{Tag 00R4}.
Thus, over this locus, by \cite{Sta}*{Tag 01V9}, the morphism is smooth if and only if so is the restriction to $H_{\mathrm{sm}}$, as the fibers over closed points agree scheme theoretically.
Hence, we have $\Sigma_H = \Sigma_Y \cap H$.
Notice that, by Lemma \ref{lemma same smooth fibers 1} and Lemma \ref{lemma same smooth fibers 2}, these loci are well defined, even though we may need to replace $X$ and $X_H$ with models that are isomorphic in codimension 1.
\newline
By the inductive hypothesis, Theorem \ref{main thm} applies to $\mathfrak{C}(n-1,w,\Phi(\mathcal R),d)$.
Let $(\mathcal{X}_\mathcal{H},\mathcal{B}_\mathcal{H}) \rar (\mathcal{H},\mathcal{D}) \rar T$ be the corresponding family of fibrations.
Up to a stratification, we may assume that $T$ is smooth.
Then, we may assume that the singularities of the fibers of $\mathcal{H} \rar T$ are induced by the singularities of $\mathcal{H}$ itself.
That is, we may assume that $\mathcal{H} \subs t,\mathrm{sing}.= (\mathcal{H}_\mathrm{sing})_t$.
Let $\Sigma \subs \mathcal{H}_\mathrm{sm}.$ be the minimal closed subset of $\mathcal{H}_\mathrm{sm}$ such that $\mathcal{X}_\mathcal{H} \rar \mathcal{H}$ is a smooth fibration over $\mathcal{H}_\mathrm{sm} \setminus \Sigma \subs \mathcal{H}_\mathrm{sm}.$.
By a stratification, we may assume that $(\Sigma \subs \mathcal{H}_\mathrm{sm}.)_t=\Sigma \subs \mathcal{H}_{t,\mathrm{sm}}.$.
Similalry, we have $(\Sigma \subs \mathcal H .)_t=\Sigma \subs \mathcal{H}_t.$, where $\Sigma \subs \mathcal H .$ and $\Sigma \subs \mathcal{H}_t.$ denote the closures of $\Sigma \subs \mathcal{H}_\mathrm{sm}.$ in $\mathcal{H}$ and and of $\Sigma \subs \mathcal{H}_{t,\mathrm{sm}}.$ in $\mathcal{H}_t$, respectively.
\newline
By \cite{Kol13}*{Claim 4.38.1}, up to a stratification and a base change, we may assume that the restrictions of the irreducible components of $\mathcal{B}_\mathcal{H}$ and $\mathcal{D}$ to the fibers of the corresponding morphisms to $T$ are irreducible.
By Corollary \ref{cor bound sings}, the coefficients of $\Delta_{X_H}$ can attain finitely many values.
Thus, as the coefficients of $\Delta_{X_H}$ and $B_H+\Delta_H$ take finitely many values, we may assume that $\mathcal{B}_\mathcal{H}$ restricts to $\Delta_{X_H}$ fiberwise.
Similarly, we may assume that $\mathcal{D}$ restricts to $B_H + \Delta_H$.
Then, by \cite{HX15}*{Proposition 2.4} and Corollary \ref{cor bound sings}, up to a stratification, we may assume that $\K \mathcal{X}_\mathcal{H}. + \mathcal{B}_\mathcal{H}$ and $\K \mathcal{H}. + \mathcal{D}$ are $\qq$-Cartier.
\newline
By generic flatness, up to a stratification of $T$, we may assume that $\mathcal{D}$ and every irreducible component of $\Sigma_\mathcal{H}$ are flat over $T$.
Now, write $T = \sqcup T_i$, where each $T_i$ is irreducible, and let $\mathcal{X} \subs \mathcal{H},i.$ and $\mathcal{H}_i$ be the corresponding irreducible components of $\mathcal{X}_\mathcal{H}$ and $\mathcal{H}$, respectively.
Fix an irreducible component $\Xi_i$ of $\Sigma \subs \mathcal{H}_i.$ of dimension $\dim(T_i)+k$.
Recall that $\dim (X)=n$.
Therefore, we have $\dim(H)=n-2$.
Then, for every $t \in T_i$, we have
\begin{equation} \label{equation_intersections}
(\K \mathcal{H},t.+ \mathcal{D}_t) \sups k. \cdot \Xi \subs i,t. = (\K \mathcal{H}.+\mathcal{D})^{k} \cdot \Xi \subs i. \cdot \mathcal{H}_t,
\end{equation}
and this expression is independent of $t \in T_i$.
Thus, the intersection products between $\K \mathcal H ,t. + \mathcal{D}_t$ and the irreducible components of $\Sigma \subs \mathcal{H},t.$ are locally constant.
In particular, they are bounded.
\newline
Now, recall that for every pair $(H,B_H+\Delta_H)$ appearing as a fiber, we have
\[
\K \mathcal{H},t.+ \mathcal{D}_t = \K H. + B_H + \Delta_H.
\]
By construction, there is $C' \in \nn$ only depending on $n$, $v$ and $\mathcal R$ so that $H \sim C'(\K Y. + B_Y + \Delta_Y)$.
For brevity, set $A_Y \coloneqq C'(\K Y. + B_Y + \Delta_Y)$.
Notice that $A_Y$ is a very ample polarization.
Now, fix an irreducible component $\Omega$ of $\Sigma_Y$ of dimension $l \geq 1$, and set $\Omega_H \coloneqq \Omega \cap H$.
Then, by the construction of $\Sigma_Y$ and $\Sigma_H$, we have
\[
\Omega \cdot A^{l} = \Omega \cdot H \cdot A^{l-1} = \Omega_H \cdot A_H^{l-1} = (C')^{l-1} \Omega_H \cdot (\K H. + B_H + \Delta_H)^{l-1},
\]
and the right hand side is bounded, by \eqref{equation_intersections} and the fact that $\K H. + B_H + \Delta_H \sim_\qq \K \mathcal{H},t. + \mathcal{D}_t$ for some $t \in T$.
Thus, the positive dimensional irreducible components of $\Sigma_Y$ have bounded degree and they are bounded in number.
Let $M \in \nn$ be a positive integer that bounds both of these quantities.
\newline
Now, recall that the degree with respect to $A$ of a subvariety of $X$ is the same as the degree of the same variety with respect to the hyperplane class of $\pr .(H^0(X,\O X.(A)))$.
Thus, every irreducible component $\Omega$ of $\Sigma_Y$ of positive dimension is a subvariety of $\pr .(H^0(X,\O X.(A)))$ of degree at most $M$.
Thus, set-theoretically, $\Omega$ is the intersection of hypersurfaces of degree at most $M$.
Choosing one of these hypersurfaces generically so that it does not contain $X$, it follows there is $D_\Omega \in |MA|$ such that $\Omega \subset \Supp(D_\Omega)$.
Thus, we may find an element $D \in |M^2A|$ whose support contains the positive dimensional part of $\Sigma_Y$.
Notice that the degree of the singular locus of $Y$ is bounded, as $Y$ comes in a bounded family.
Thus, we may also assume that $\Supp(D)$ contains the singular locus of $Y$.
Thus, the claim follows.
\end{proof}

\begin{remark} \label{remark support}
In the proof of Proposition \ref{bound cycles}, we may also assume that $\Supp(B_Y+\Delta_Y) \subset \Supp(D)$.
Indeed, as these divisors are bounded as the cycle $\Sigma_Y$ is, we may find a bounded multiple of $\K Y. +B_Y + \Delta_Y$ such that some element of the corresponding linear series vanishes along these divisors.
\end{remark}

\begin{remark} \label{rmk D' bdd}
Fix $X \in \mathfrak{C}(n,v,\Phi(\mathcal R),d)$ and the associated $(Y,B_Y+\Delta_Y+D)$, where $D$ is as in Proposition \ref{bound cycles}.
Also, fix $J(X)$ as discussed above.
Consider $(Y',B_{Y'}+\Delta_{Y'})$ as in Definition \ref{def fibration J}, and let $D'$ be the pull-back of $D$ to $D'$.
Since $(Y',B_{Y'}+\Delta_{Y'})$ is bounded and $0 \leq D' \sim_\qq C (\K Y'. + B \subs Y'. + \Delta \subs Y'.)$ where $C$ only depends on $n$, $v$, $d$, and $\mathcal{R}$, it follows that $(Y',B_{Y'}+\Delta_{Y'}+D')$ is bounded.
Then, we can regard $J(X) \rar (Y',B_{Y'}+\Delta_{Y'}+D')$ as an element of $\mathfrak{P}(n,v,\Phi(\mathcal R))$ together with the datum of $D'$.
Notice that $D'$ depends on $\mathfrak{C}(n,v,\Phi(\mathcal R),d)$, and not just on $\mathfrak{C}(n,v,\Phi(\mathcal R))$.
\end{remark}

\begin{definition}
We define $\mathfrak{P}(n,v,\Phi(\mathcal R),d)$ to be the set of fibrations $J(X) \rar (Y',B_{Y'}+\Delta_{Y'}+D')$, where $J(X) \rar (Y',B_{Y'}+\Delta_{Y'}) \in \mathfrak{P}(n,d,\Phi(\mathcal R))$ and $D'$ is the pull-back of $D$, which is as in Proposition \ref{bound cycles}.
\end{definition}

\begin{theorem} \label{bdd jacobian}
Fix positive integers $n,d$, a positive real number $v$, and a finite set $\mathcal{R} \subset \qq \cap [0,1]$.
Assume that Theorem \ref{main thm} holds true in dimension strictly less than $n$.
Then, the set of fibrations $\mathfrak{P}(n,v,\Phi(\mathcal{R}),d)$ is bounded in codimension 1.
Furthermore, we may choose the birationally bounding family so that the bases of the firbations are actually bounded.
\end{theorem}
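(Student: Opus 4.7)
My plan is to adapt the approach of \cite{dCS17,FS19}: since every Jacobian fibration carries a rational section, I would use it to promote the polarization on the already-bounded base $Y'$ to an ample polarization of bounded volume on the total space, and then conclude via boundedness results for polarized klt pairs. By Proposition \ref{prop bdd base} and Remark \ref{rmk D' bdd}, the log pairs $(Y', B_{Y'} + \Delta_{Y'} + D')$ form a bounded family; in particular, one may fix a uniform positive integer $N_0$ such that $A \coloneqq N_0(\K Y'. + B_{Y'} + \Delta_{Y'})$ is very ample on $Y'$ with bounded degree and $\Supp(D' + B_{Y'} + \Delta_{Y'}) \subset \Supp(A)$. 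Moreover, $J(X)$ is terminal, $\qq$-factorial, satisfies $\K J(X). \sim_{\qq, Y'} 0$, and by Corollary \ref{cor bound sings} its singularities are uniformly controlled.

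The heart of the argument is the construction of a bounded polarization on $J(X)$. First, I would pass to a projective birational model $\tilde{J}(X) \to J(X)$, isomorphic in codimension one, on which the rational section becomes a regular section $\sigma \colon Y' \to \tilde{J}(X)$. Identifying $\sigma(Y')$ with $Y'$, adjunction combined with the canonical bundle formula for $\tilde{J}(X) \to Y'$ yields $\sigma|_\sigma \sim_\qq -(B_{Y'} + \bM Y'.)$, the higher-dimensional analogue of Kodaira's relation $\sigma^2 = -\chi(\mathcal{O})$ for elliptic surfaces. I would then set $L \coloneqq \sigma + Nf^*A$ for a large enough uniform integer $N$; this divisor is ample on $\tilde{J}(X)$, and its top self-intersection expands via the projection formula into a sum of intersection products of $A$, $B_{Y'}$, and $\bM Y'.$ computed on the bounded base. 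Since $(Y', B_{Y'}+\Delta_{Y'})$ lies in a bounded family and $\Delta_{Y'} \sim_\qq \bM Y'.$ can be chosen in a uniformly bounded linear series by \cite{PS09}*{Theorem 8.1}, all of these intersection numbers, and hence $L^n$, are bounded uniformly in $n$, $v$, $d$, $\mathcal R$.

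Given a polarization $L$ of bounded volume on the $\epsilon$-klt pair $(\tilde{J}(X), \epsilon \sigma)$ with boundary coefficients in a fixed finite set, I would invoke boundedness for polarized klt pairs in the spirit of \cite{HMX18} to obtain that the fibrations $\tilde{J}(X) \to Y'$ together with the section $\sigma$ form a birationally bounded family; matching this bounding family with the already bounded base ensures that the base is actually bounded, not merely birationally. Finally, Lemma \ref{lemma same smooth fibers 1} and Lemma \ref{lemma same smooth fibers 2} allow me to upgrade birational boundedness of $\tilde{J}(X)$ to boundedness in codimension one for $J(X)$, using that the latter is $\qq$-factorial and terminal with $\K J(X). \sim_{\qq, Y'} 0$. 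The main obstacle is the uniform bound on $\sigma^n = (\sigma|_\sigma)^{n-1}$: this quantity is a priori uncontrolled, and its bound rests crucially on the boundedness of $(Y', B_{Y'} + \Delta_{Y'})$ together with uniform control on the denominators and coefficients of the moduli b-divisor $\bM.$.
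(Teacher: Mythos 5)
Your outline is in the right spirit---both you and the paper exploit the rational section of the Jacobian to promote a polarization from the bounded base---but there are several genuine gaps. First, the divisor $L = \sigma + N f^* A$ you propose is not obviously ample: even after running an MMP to make $\sigma$ relatively nef and big over the base, one only obtains a nef and big divisor, and the paper has to work to establish nefness of the analogous divisor, invoking Fujino's boundedness of the lengths of extremal rays (Step 4 of the paper's proof). Second, and more seriously, the tool you invoke (``boundedness for polarized klt pairs in the spirit of \cite{HMX18}'') is not directly applicable to an arbitrary ample $L$; the boundedness results actually used require the divisor of fixed volume to be of the form $\K X. + \Delta$ for an $\epsilon$-lc pair with coefficients in a finite set. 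The key trick the paper employs is to build an auxiliary boundary $\frac{1}{2}\hat Y + \frac{1}{2}\hat G$ (with $\hat G$ a general member of $|(2n+3)j^*H'|$) so that $\K J(X). + \frac{1}{2}\hat Y + \frac{1}{2}\hat G$ is nef and big, the pair is $\frac{1}{2}$-lc with coefficient set $\{\frac{1}{2}\}$, and then uses discreteness of volumes \cite{Fil19} together with boundedness at fixed volume \cite{MST}. Without this reformulation as a pair of log general type, the invocation of boundedness falls short.

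There is also a gap in your estimate of the top self-intersection of $L$. Your adjunction formula $\sigma|_\sigma \sim_\qq -(B_{Y'} + \bM Y'.)$ uses the boundary $B_{Y'}$ associated to $X$, but the Jacobian fibration induces $C_{Y'}$ with $0 \leq C_{Y'} \leq B_{Y'}$, and more importantly your formula ignores the different $\mathrm{Diff}(0)$ on the section. The paper establishes that $(J(X),\hat Y)$ is plt (Step 3, via \cite{dCS17}*{Lemma 5.1} and the negativity lemma), so $\hat Y$ is normal and $\K \hat Y^\nu. + \mathrm{Diff}(0)$ differs from the pullback of $\K Y^\qq.$ by an effective exceptional divisor; this inequality feeds into the restricted-volume estimate of Step 5 (via differentiability of the volume function) to obtain the upper bound. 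Your plan to ``expand $L^n$ via the projection formula'' would require you to bound $\sigma^n$, but without the plt statement and the exceptionality of the different this quantity is not a priori controlled. Finally, you do not address how to upgrade birational boundedness of the pairs $(J(X),\frac{1}{2}\hat Y + \frac{1}{2}\hat G)$ to boundedness of the fibrations over a bounded base; the paper's Step 7 does this by stratifying the family so that the relative ample model of $\K \mathcal J. + \frac{1}{2}\mathcal G$ induces the ample model fiberwise (yielding the bounded base $Y'$), whereas Lemmas \ref{lemma same smooth fibers 1} and \ref{lemma same smooth fibers 2} are about identifying smooth fibers and do not serve this purpose.
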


\begin{remark}
Notice that the inductive assumption on Theorem \ref{main thm} is only needed for the existence of the divisor $D'$.
\end{remark}

\begin{proof}
Fix a fibration $j \colon J(X) \rar (Y',B_{Y'}+\Delta_{Y'}+D') \in \mathfrak{P}(n,v,\Phi(\mathcal R),d)$.
Then, by construction, we have $\K J(X). \sim_\qq j^* (\K Y'. + C_{Y'} + \Delta_{Y'})$, where $0 \leq C_{Y'} \leq B_{Y'}$.
By Remark \ref{rmk D' bdd}, the pairs of the form $(Y',B_{Y'}+\Delta_{Y'}+D')$ are bounded.
Since $(Y',B_{Y'}+\Delta_{Y'})$ is bounded and klt with coefficients in a finite set, we may find $0 < c \ll 1$ only depending on $\mathfrak{C}(n,v,\Phi(\mathcal R),d)$ so that the pairs of the form $(Y',B_{Y'}+\Delta_{Y'}+cD')$ are klt.
This can be achieved by taking a bounding family for the pairs $(Y',B_{Y'}+\Delta_{Y'}+D')$, where we prescribe the coefficients of the bounding divisors to restrict to $B \subs Y'. + \Delta \subs Y'.$.
Notice that, by the construction of $D$ in Proposition \ref{bound cycles}, $\K Y'. + B_{Y'} + \Delta_{Y'} + cD'$ is semi-ample and big with fixed volume.
Now, we divide the proof into several steps.
We follow the proof of \cite{dCS17}*{Theorem 1.1}.
\newline
{\bf Step 1:} {\it In this step we reduce to the case when $J(X)$ maps to a $\qq$-factorialization $Y \sups \qq.$ of $Y'$.
}
\newline
Since $(Y',B_{Y'}+\Delta_{Y'}+cD')$ is klt, $Y'$ admits a small $\qq$-factorialization $(Y^\qq,B_Y^\qq+\Delta_Y^\qq+cD^\qq) \rar (Y',B_{Y'}+\Delta_{Y'}+cD')$.
By \cite{MST}, also $(Y^\qq,B_Y^\qq+\Delta_Y^\qq+cD^\qq)$ belongs to a bounded family which only depends on $\mathfrak{C}(n,v,\Phi(\mathcal R),d)$.
Up to replacing $J(X)$ with a model that is isomorphic in codimension 1, by Proposition \ref{small q-fact}, we may assume that $J(X) \rar Y'$ factors through $Y^\qq$.
\newline
{\bf Step 2:} {\it In this step we reduce to the case when the rational section satisfies certain positivity assumptions.}
\newline
Now, denote by $\hat Y$ the closure of the rational section of $j \colon J(X) \rar Y'$.
Then, $\hat Y$ is relatively big over $Y^\qq$.
Also, for $0 < \gamma \ll 1$, $(J(X),\gamma \hat Y)$ is klt.
Thus, by \cite{BCHM}, any $(\K J(X). + \gamma \hat Y)$-MMP over $Y^\qq$ with scaling of an ample divisor terminates.
Let $(\tilde J (X),\gamma \tilde Y)$ be the resulting model.
Denote by $\tilde j \colon \tilde{J}(X) \rar Y^\qq$ the resulting morphism.
Notice that $\K J(X). + \gamma \hat Y \sim \subs \qq,Y^\qq. \gamma \hat Y$.
Thus, this MMP is independent of $\gamma$, and $\tilde Y$ is relatively big and semi-ample over $Y^\qq$.
Furthermore, since $\hat Y$ is irreducible and dominates $Y^\qq$, every step of the above MMP has to be a $(\K J(X). + \gamma \hat Y)$-flip.
Thus, $\tilde{J}(X)$ is isomorphic to $J(X)$ in codimension 1.
Moreover, as $\K J(X). \sim \subs \qq,Y^\qq. 0$, the terminality of $J(X)$ implies that of $\tilde{J}(X)$.
Thus, up to relabelling, we may assume that $(J(X),\hat Y)=(\tilde J (X),\tilde Y)$.
\newline
{\bf Step 3:} {\it In this step we show that $(J(X),\hat Y)$ is a plt pair.
This implies that $\hat Y$ is a normal variety admitting a structure of klt pair.}
\newline
Normality of $\hat Y$ and the exitence of a structure of klt pair on $\hat Y$ will follow from the plt-ness of $(J(X),\hat Y)$ by \cite{KM98}*{Proposition 5.51} and inversion of adjunction, see \cite{Kaw07}.
To show that $(J(X),\hat Y)$ is plt, it suffices to show that $(J(X), \hat Y)$ is log canonical and that $\hat Y$ is its only log canonical center.
Let $\phi \colon \hat Y ^\nu \rar \hat Y$ be the normalization of $\hat Y$, and let $\mathrm{Diff}(0)$ be the different defined by
\[
\K \hat Y ^\nu. + \mathrm{Diff}(0) \coloneqq \phi^*((\K J(X). + \hat Y)| \subs \hat Y.).
\]
By construction, $\K \hat Y ^\nu. + \mathrm{Diff}(0)$ is nef and big over $Y^\qq$.
By \cite{dCS17}*{Lemma 5.1}, $\mathrm{Diff}(0)$ is exceptional over $Y^\qq$.
Thus, we have $(\hat j \circ \phi)_*(\K \hat Y ^\nu. + \mathrm{Diff}(0))=\K Y^\qq.$, where we set $\hat j \colon J(X) \rar Y^\qq$.
Since $Y^\qq$ is $\qq$-factorial, the negativity lemma \cite{KM98}*{Lemma 3.39} implies that
\begin{align}
\label{eqtn_klt}
\K \hat Y ^\nu. + \mathrm{Diff}(0) = (\hat j \circ \phi)^* \K Y^\qq. - F, 
\end{align}
where $F \geq 0$ is $(\hat j \circ \phi)$-excetpional.
As $(Y^\qq,B_Y^\qq+\Delta_Y^\qq+cD^\qq)$ is klt, then so is $(Y^\qq,0)$.
Therefore, it follows from \eqref{eqtn_klt} that $(\hat Y ^\nu,\mathrm{Diff}(0))$ is klt.
Inversion of adjunction implies that $\hat Y$ is the only log canonical center of $(J(X), \hat Y)$.
In particular, $(J(X), \hat Y)$ is plt and the other conclusions follow as indicated above.
\newline
{\bf Step 4:} {\it In this step 
we show that there exists an effective divisor $\hat G$ on $J(X)$ such that the pair $(J(X), \frac 1 2 \hat Y + \frac 1 2 \hat G)$ is $\frac 1 2$-log canonical, and $K_{J(X)}+\frac 1 2 \hat Y + \frac 1 2 \hat G$ is big.}
\newline
Let $H'$ be a very ample polarization of bounded degree on $Y'$.
Notice that its existence is guaranteed by the boundedness of the pairs $(Y',B_{Y'}+\Delta_{Y'}+cD')$.
We may assume that $H' \pm (\K Y'. + B_{Y'} + \Delta_{Y'})$ and $H' \pm (\K Y'. + C_{Y'} + \Delta_{Y'})$ are ample.
Since $D'$ is $\qq$-linearly equivalent to $\K Y. + C \subs Y'. + \Delta \subs Y'.$ up to a bounded multiple, we may also assume that $H' \pm D'$ is ample.
Furthermore, we may assume that $H^0(Y',\O Y'.(H'-\Supp(B_{Y'}+\Delta_{Y'}))\neq 0$.
Let $\hat{G}$ be a general member of $|(2n+3)  j ^*H'|$.
Then, $(J(X),\hat{Y}+\hat G)$ is log canonical.
On the other hand, $J(X)$ is terminal, and the discrepancies of valuations are linear functions of the boundary divisor of a pair.
Hence, it follows that $(J(X), \frac{1}{2} \hat Y + \frac{1}{2} \hat G)$ is $\frac{1}{2}$-log canonical.
Since $\K J(X).+j^*H'$ is the pull-back of an ample divisor on $Y'$, $\hat Y$ is effective and relatively big over $Y'$, it follows that $\K J(X). + \frac{1}{2}\hat Y + \frac{1}{2} \hat G$ is big.
Since we have
\[
\K J(X). + \frac{1}{2}\hat Y + \frac{1}{2} \hat G \sim_\qq \frac{1}{2} (\K J(X) . +j^*H') + \frac{1}{2} (\K J(X). + \hat Y + (2n+2)j^*H'),
\]
and $\K J(X). + j^*H'$ is nef, it suffices to show that $\K J(X). + \hat Y + (2n+2)j^* H'$ is nef to conclude that the same holds for $\K J(X). + \frac{1}{2}\hat Y + \frac{1}{2} \hat G$.
The nefness of $\K J(X). + \hat Y + (2n+2)j^* H'$ follows by the boundedness of the negative extremal rays \cite{Fuj14}*{Theorem 1.19}.
Indeed, let $R$ be a $(\K J(X). + \hat Y)$-negative extremal ray.
There exists a rational curve $C$ spanning $R$ such that $-2n \leq (\K J(X). + \hat Y)\cdot C < 0$.
Since $\K J(X). + \hat Y$ is nef relatively to $Y'$, then $j (C)$ is a curve.
In particular, we have $(2n+2)j^*H' \cdot C \geq (2n+2)H' \cdot j (C) \geq 2n+2$.
So, it follows that $\K J(X). + \hat Y + (2n+2)j^* H'$ is non-negative on every $(\K J(X). + \hat Y)$-negative extremal ray.
Thus, $\K J(X). + \hat Y + (2n+2)j^* H$ is nef.
In particular, we have that $\K J(X). + \frac{1}{2}\hat Y + \frac{1}{2} \hat G$ is nef and big.
\newline
{\bf Step 5:} {\it In this step we show that there exist positive constants $C_1$ and $C_2$, only depending on $\mathfrak{C}(n,v,\Phi(\mathcal R),d)$, such that $C_1 \leq (\K J(X). + \frac{1}{2}\hat Y + \frac{1}{2} \hat G)^n \leq C_2$.}
\newline
The existence of $C_1$ follows from \cite{HMX14}*{Theorem 1.3}.
Thus, we are left to show the existence of $C_2$.

Up to a rescaling factor only depending on $n$, we need to consider the following quantity:
\begin{equation}\label{eq_summands}
    2\K J(X). \cdot (2 \K J(X).+\hat{Y}+\hat{G})^{n-1}+\hat{Y} \cdot (2 \K J(X).+\hat{Y}+\hat{G})^{n-1}+\hat{G} \cdot (2 \K J(X).+\hat{Y}+\hat{G})^{n-1}.
\end{equation}
By the choice of $H'$ in Step 4, $\hat{G}-2\K J(X).$ is semi-ample.
Thus, as $2 \K J(X).+\hat{Y}+\hat{G}$ is nef, in order to bound $2\K J(X). \cdot (2 \K J(X).+\hat{Y}+\hat{G})^{n-1}$, it suffices to bound $\hat G \cdot (2 \K J(X).+\hat{Y}+\hat{G})^{n-1}$.
Thus, we are left with finding an upper bound for the second and third summands in \eqref{eq_summands}.

By \eqref{eqtn_klt} and adjunction, we have $(\K J(X).+\hat{Y})|_{\hat{Y}} = (\hat{j}|_{\hat{Y}})^* \K Y^\qq. - F$, where $F \geq 0$ is ${\hat{j}}|_{\hat{Y}}$-exceptional.
Thus, we get
\begin{align}\label{eq_second_summand}
    \begin{split}
        \hat{Y} \cdot (2 \K J(X).+\hat{Y}+\hat{G})^{n-1}&=((\K J(X).+\hat{Y})|_{\hat{Y}}+\K J(X).|_{\hat{Y}}+\hat{G}|_{\hat{Y}})^{n-1}\\
        &=\vol(\hat{Y},(\K J(X).+\hat{Y})|_{\hat{Y}}+\K J(X).|_{\hat{Y}}+\hat{G}|_{\hat{Y}})\\
        &\leq \vol(\hat{Y},(\hat{j}|_{\hat{Y}})^* \K Y^\qq.+\K J(X).|_{\hat{Y}}+\hat{G}|_{\hat{Y}})\\
        &\leq \vol(Y',(2n+5)H')\\
        &=(2n+5)^{n-1}(H')^{n-1},
    \end{split}
\end{align}
where the last quantity is bounded, as $n$ is fixed and $H'$ is the fixed very ample polarization of bounded degree $Y'$.
In \eqref{eq_second_summand}, the second inequality follows from \eqref{eqtn_klt}, while the third inequality follows from the projection formula and the fact that $H' - (\K Y'. + B_{Y'} + \Delta_{Y'})$ is ample.

Thus, we are left with bounding the summand $\hat{G} \cdot (2 \K J(X).+\hat{Y}+\hat{G})^{n-1}$.
For an integer $1 \leq k \leq n-1$, we have
\begin{align}\label{ineq_induction}
    \begin{split}
        \hat{G}^k \cdot (2 \K J(X).+\hat{Y}+\hat{G})^{n-k}&=\hat{G}^k \cdot (2 \K J(X).+\hat{Y}+\hat{G}) \cdot (2 \K J(X).+\hat{Y}+\hat{G})^{n-k-1}\\
        &\leq \hat{G}^k \cdot (\hat{Y}+2\hat{G}) \cdot (2 \K J(X).+\hat{Y}+\hat{G})^{n-k-1}\\
        &=2\hat{G}^{k+1} \cdot (2 \K J(X).+\hat{Y}+\hat{G})^{n-k-1}
        +\hat{G}^{k} \cdot \hat{Y} \cdot (2 \K J(X).+\hat{Y}+\hat{G})^{n-k-1},
    \end{split}
\end{align}
where the inequality follows from the fact that, by construction, $\hat{G}-2\K J(X).$ is semi-ample.
Since $\hat{G}^{n}=0$, by iteratively applying \eqref{ineq_induction} $n-1$ times, bounding the intersection number $\hat{G} \cdot (2 \K J(X).+\hat{Y}+\hat{G})^{n-1}$ reduces to bounding $\hat{G}^{k} \cdot \hat{Y} \cdot (2 \K J(X).+\hat{Y}+\hat{G})^{n-k-1}$ for $1 \leq k \leq n-1$.
This can be achieved by \eqref{eqtn_klt} and the fact that $H' - (\K Y'. + B_{Y'} + \Delta_{Y'})$ is ample as in the treatment of \eqref{eq_second_summand}.
Thus, this concludes the step.
\newline
{\bf Step 6:} {\it In this step we show that the birational representative of the Jacobian fibration $J(X)$ chosen at the end of Step 2 is bounded.}
\newline
As showed in the previous steps, $(J(X), \frac{1}{2}\hat Y + \frac{1}{2} \hat G)$ is $\frac{1}{2}$-log canonical and its coefficients belong to the set $\lbrace \frac{1}{2} \rbrace$.
Thus, by \cite{Fil19}*{Theorem 1.3}, $\vol (\K J(X). + \frac{1}{2} \hat Y + \frac{1}{2} \hat G )$ belongs to a discrete set only depending on $\mathfrak{C}(n,v,\Phi(\mathcal R),d)$.
By Step 5, this volume is also bounded from above and below.
Thus, we conclude that $\vol (\K J(X). + \frac{1}{2} \hat Y + \frac{1}{2} \hat G )$ attains only finitely many values, only depending on $\mathfrak{C}(n,v,\Phi(\mathcal R),d)$.
Then, by \cite{MST}*{Theorem 6}, the set of pairs $(J(X), \frac{1}{2}\hat Y + \frac{1}{2} \hat G)$ is bounded.
In particular, the varieties $J(X)$ are bounded.
Notice that the boundedness of this specific model guarantees the boundedness in codimension 1 of any other model with the properties required at the beginning of \S~\ref{section elliptic}, since the distinguished model chosen at the end of Step 2 is isomorphic in codimension 1 to any arbitrary model taken as input at the beginning of this proof.
\newline
{\bf Step 7:} \emph{In this step we conclude the proof.}
\newline
Since the boundary divisor $\hat G$ is bounded, the intersection between a fixed very ample polarization and $\hat G$ is bounded.
Now, we consider $j^*D'$.
By the choice of $H'$ in Step 4, $H'-D'$ is ample.
Thus, we have that the intersection between any polarization on $J(X)$ and $\Supp(j^*D')$ is bounded above by the intersection with $\hat G$.
Similarly, as $H^0(Y',\O Y'.(H'-\Supp(B_{Y'}+\Delta_{Y'}))\neq 0$, the divisor $H^\qq-\Supp(B_{Y^\qq}+\Delta_{Y^\qq})$ is linearly equivalent to an effective divisor.
Thus, we can bound $\Supp(\hat j^*(B_{Y^\qq}+\Delta_{Y^\qq}))$.
Hence, the pairs $(J(X),\hat Y + \hat G + \hat j^*(B_{Y^\qq}+\Delta_{Y^\qq}+D^\qq))$ are bounded, where we choose the birational representative of $J(X)$ according to the previous step.
Choose a bounding family $(\mathcal{J},\mathcal{E}) \rar U$, where the reduced divisor $\mathcal{E}$ bounds the support of all the above divisors.
That is, if the closed point $u \in U$ corresponds to $(J(X),\hat Y + \hat G + \hat j^*(B_{Y^\qq}+\Delta_{Y^\qq}+D^\qq))$, we have $(J(X),\Supp(\hat Y + \hat G + \hat j^*(B_{Y^\qq}+\Delta_{Y^\qq}+D^\qq))) \cong (\mathcal{J}_u,\mathcal{E}_u)$.
\newline
By Step 4, $\K J(X). + \frac{1}{2} \hat G$ is $\qq$-linearly equivalent to the pull-back of an ample divisor on $Y'$.
The pairs of the form $(J(X),\frac{1}{2}\hat G)$ are bounded and klt.
Thus, up to stratifying $U$ so that a log resolution of $(\mathcal{J},\mathcal{E})$ induces a log resolution of the fibers of $\mathcal{J} \rar U$, we may apply the conclusions of \cite{HMX18}*{Corollary 1.4}.
Furthermore, up to a base change, we may assume that the restrictions of the irreducible components of $\mathcal{E}$ to the fibers are irreducible \cite{Kol13}*{Claim 4.38.1}.
Thus, we can define the divisor $\mathcal{G}$, which is supported on $\mathcal{E}$ and restricts to the divisor $\hat G$ on the fibers of the family.
Up to a stratification of the family \cite{HX15}*{Proposition 2.4}, $\K \mathcal J. + \frac{1}{2} \mathcal{G}$ is $\qq$-Cartier.
Then, the relative ample model of $\K \mathcal J. + \frac{1}{2} \mathcal{G}$ induces the ample model of each fiber.
Thus, the family $\mathcal{J} \rar U$ factors as $\mathcal{J} \rar \mathcal{Y}' \rar U$, where the family $\mathcal{Y}' \rar U$ bounds the surfaces $Y'$.
\newline
Let $\mathcal{F}$ denote the reduced divisorial part of the image of $\mathcal{E}$ in $\mathcal{Y}$.
By construction, the components of $\mathcal{E}$ corresponding to $\hat j^*(B_{Y^\qq}+\Delta_{Y^\qq}+D^\qq)$ induce components of $\mathcal{E}$ corresponding to $B_{Y'}+\Delta_{Y'}+D'$.
Then, the claim follows by considering the family $\mathcal{J} \rar (\mathcal{Y},\mathcal{F}) \rar U$.
\end{proof}

\begin{remark} \label{rmk section}
By Step 7 in the proof of Theorem \ref{bdd jacobian}, the rational sections $\hat Y$ are bounded together with the fibrations $J(X) \rar (Y,B_Y+\Delta_Y+D)$.
Thus, for every irreducible component $U_i \subset U$, the corresponding fibration $\mathcal{J}_i \rar \mathcal{Y}_i$ is an elliptic fibration with a rational section.
Furthermore, the rational section restricts to the rational section of $J(X) \rar Y$ for every element of $\mathfrak{P}(n,v,\Phi(\mathcal R),d)$.
\end{remark}

\begin{lemma} \label{lemma supports}
Let $X$ be an element of $\mathfrak{C}(n,v,\Phi(\mathcal R),d)$, and let $(Y,B_Y+\Delta_Y)$ be the associated pair.
Furthermore, let $D$ be as in Proposition \ref{bound cycles} and $(Y',B_{Y'}+\Delta_{Y'})$ be as in Definition \ref{def fibration J}.
Then, the induced morphism $Y' \setminus \Supp(B_{Y'}+D') \rar Y$ is an isomorphism with its image, and its image is included in $Y \setminus \Supp(D)$.
Here, $D'$ denotes the pull-back of $D$ to $D'$.
\end{lemma}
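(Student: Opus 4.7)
Set $U \coloneqq Y \setminus \Supp(D)$. The plan is to prove first that the birational morphism $\pi \colon Y' \rar Y$ is an isomorphism over $U$; once this key fact is in hand, the two assertions of the lemma follow by formal manipulations of supports.

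For the key step, observe that over $U$ we have $Y$ smooth (since $Y_{\mathrm{sing}} \subset \Supp(D)$), the Iitaka fibration $X \rar Y$ smooth except at a finite set of isolated points (by the choice of $D$ in Proposition \ref{bound cycles}), and since $X_\eta$ and $J(X)_\eta$ have the same $j$-invariant fiberwise, the Jacobian fibration $J(X) \rar Y$ is also smooth over $U$ outside a finite set. In particular, $J(X) \rar Y$ has no multiple or non-reduced fibers in codimension one over $U$. Since $J(X) \rar Y$ admits a rational section, Kodaira's canonical bundle formula, applied on the big open subset of $U$ where the section is regular, gives $K_{J(X)}$ as the $\qq$-linear pullback of a $\qq$-divisor; by normality of $J(X)$, this pullback description extends to all of $U$. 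Thus the relative canonical algebra $\bigoplus_m j_*\mathcal{O}_{J(X)}(m K_{J(X)})|_U$ is locally isomorphic to $\bigoplus_m \mathcal{O}_U(mL)$ for some $\qq$-divisor $L$, and its relative $\mathrm{Proj}$ over $U$ equals $U$ itself. Since $Y'$ is by definition the relative ample model of $J(X)$ over $Y$, this yields $\pi^{-1}(U) \cong U$, i.e.\ $\pi$ is an isomorphism over $U$.

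Given the key step, the lemma follows formally. Because $D' = \pi^* D$, the equality $\Supp(D') = \pi^{-1}(\Supp(D))$ gives that every $y' \in Y' \setminus \Supp(D')$ maps into $Y \setminus \Supp(D)$, which is the claimed inclusion of the image. By Remark \ref{remark support}, $\Supp(B_Y) \subset \Supp(D)$, so $\pi^{-1}_*(\Supp(B_Y)) \subset \pi^{-1}(\Supp(D)) = \Supp(D')$. The support of the trace $B_{Y'}$ is contained in $\pi^{-1}_*(\Supp(B_Y)) \cup \mathrm{Ex}(\pi)$, and from the key step $\mathrm{Ex}(\pi) \subset \Supp(D')$; combining, $\Supp(B_{Y'}) \subset \Supp(D')$ and therefore $\Supp(B_{Y'}+D')=\Supp(D')$. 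Hence $Y' \setminus \Supp(B_{Y'}+D') = \pi^{-1}(U)$ maps isomorphically onto $U \subset Y \setminus \Supp(D)$ by the key step.

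The main obstacle is the key step, specifically extending the isomorphism past the finitely many isolated non-smooth fibers of $J(X) \rar Y$ inside $U$. Over the big open subset of $U$ where the rational section of $J(X) \rar Y$ becomes regular, the argument via Kodaira's formula and the $\mathrm{Proj}$ computation is direct; bridging from this big open to all of $U$ requires normality arguments on both $Y$ and $Y'$, or alternatively an appeal to the full canonical bundle formula of Theorem \ref{classic cbf} (used similarly in Lemma \ref{lemma cbf jacobian}) to constrain the supports of $B_{Y'}$ and of the $\pi$-exceptional divisors.
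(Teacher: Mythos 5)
Your proof attempts to establish a strictly stronger statement than the lemma requires, namely that $\pi \colon Y' \rar Y$ is an isomorphism over all of $U \coloneqq Y \setminus \Supp(D)$, and the crucial step of that claim is not justified. You correctly observe that over the big open subset $U^\circ \subset U$ where $j \colon J(X) \rar Y$ is a smooth elliptic fibration with a regular section, Kodaira's formula makes $K_{J(X)}$ a $\qq$-linear pullback, hence the relative ample model agrees with $Y$ there. The problem is the extension from $U^\circ$ to $U$. The complement $U \setminus U^\circ$ is a finite set of points, but $j$ need not be equidimensional: it can have divisorial fibers over those isolated points. If $j^{-1}(p)$ contains a divisor of $J(X)$ for some $p \in U \setminus U^\circ$, then the relative canonical algebra $\bigoplus_m j_*\O J(X).(mK_{J(X)})$ can jump at $p$, and the relative ample model $Y'$ can blow up $p$ -- in which case $\pi$ is \emph{not} an isomorphism over $U$. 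Invoking normality of $J(X)$ (or of $Y$, $Y'$) does not rule this out; normality only controls sheaves in codimension one on the \emph{base}, not the possible excess fiber dimension of $j$. You flag this issue yourself in your final paragraph, but the gesture toward ``normality arguments'' or toward Lemma~\ref{lemma cbf jacobian} does not close it.

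The paper proves something weaker and sufficient. It does not claim $\pi^{-1}(U) \cong U$; rather, after shrinking to where $Y$ is smooth (so $\qq$-factorial, so $\mathrm{Ex}(\pi)$ is purely divisorial), it considers the effective divisor $C_{Y'}$ with $0 \le C_{Y'} \le B_{Y'}$ furnished by Lemma~\ref{lemma cbf jacobian}, for which $K_{Y'} + C_{Y'} + \Delta_{Y'}$ is $\pi$-ample and differs from $\pi^*\pi_*(K_{Y'}+C_{Y'}+\Delta_{Y'})$ by a $\pi$-exceptional divisor. The negativity lemma then gives $K_{Y'}+C_{Y'}+\Delta_{Y'} = \pi^*(\cdot) - F$ with $F \ge 0$ fully supported on $\mathrm{Ex}(\pi)$, and comparing with $K_{Y'}+B_{Y'}+\Delta_{Y'} = \pi^*(K_Y+B_Y+\Delta_Y)$ and $B_Y \ge \pi_*C_{Y'}$ forces $C_{Y'}+F \le B_{Y'}$, hence $\mathrm{Ex}(\pi) = \Supp(F) \subset \Supp(B_{Y'})$. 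This places $\mathrm{Ex}(\pi)$ in $\Supp(B_{Y'})$ regardless of where those exceptional divisors map -- exactly what the lemma's conclusion, ``$Y' \setminus \Supp(B_{Y'}+D') \rar Y$ is an open immersion with image in $Y \setminus \Supp(D)$,'' needs, and without presupposing that $\pi$ blows up nothing over the isolated bad points of $U$. Your ``formal manipulations of supports'' at the end also depend on the key step (you use $\mathrm{Ex}(\pi) \subset \Supp(D')$ there), so that part inherits the same gap.
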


\begin{proof}
By construction, $\Supp(D)$ contains the singular locus of $Y$.
Thus, $\Supp(D')$ contains the preimage of the singular locus of $Y$.
Hence, up to shrinking, we may assume that $Y$ is smooth, and, in particular, $\qq$-factorial.
This guarantees that the exceptional locus is purely divisorial.
\newline
As argued at the beginning of \S~\ref{section elliptic} and in Lemma \ref{lemma cbf jacobian}, there is a divisor $C_{Y'}$ so that
\begin{itemize}
    \item $0 \leq C_{Y'} \leq B_{Y'}$; and
    \item $\K Y'. + C_{Y'} + \Delta_{Y'}$ is relatively ample over $Y$; and
    \item $\K Y'. + C_{Y'} + \Delta_{Y'}-\pi^*(\pi_*(\K Y'. + C_{Y'} + \Delta_{Y'}))$ is $\pi$-exceptional (notice that here we are using the reduction to the case when $Y$ is $\qq$-factorial).
\end{itemize}
By the negativity lemma \cite{KM98}*{Lemma 3.39}, it follows that
\[
\K Y'. + C_{Y'} + \Delta_{Y'} = \pi^*(\pi_*(\K Y'. + C_{Y'} + \Delta_{Y'}))-F,
\]
where $F \geq 0$ is fully supported on the $\pi$-exceptional locus.
Since $B_Y \geq \pi_*(C_{Y'})$, it follows that and $0 \leq C_{Y'} \leq C_{Y'} + F \leq B_{Y'}$.
In particular, $\Supp(B_{Y'})$ contains the $\pi$-exceptional locus.
This concludes the proof.
\end{proof}

\begin{proposition} \label{prop for surfaces}
Let $(X,\Delta)$ be a minimal klt pair with $\dim(X)=2$ and $\kappa(X,\Delta)=1$.
Let $f \colon X \rar Y$ be the Iitaka fibration, and assume it is an elliptic fibration.
Let $B_Y$ be the boundary divisor of the canonical bundle formula for the pair $(X,\Delta)$ and the morphism $f$.
Then, for every closed point $P \in Y \setminus \Supp(B_Y)$, $f$ admits a section \'etale locally at $P$.
\end{proposition}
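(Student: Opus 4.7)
My plan is to show that the hypothesis $P \notin \Supp(B_Y)$ forces the scheme-theoretic fiber $f^{-1}(P)$ to be reduced, and then to combine the klt hypothesis on $X$ with a strict henselization argument to produce a section over an étale neighborhood of $P$.

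First, I will interpret the condition $b_P = 0$ coming from the canonical bundle formula. Writing $f^*P = \sum_i m_i E_i$ with the $E_i$ the prime components of the fiber, the condition $b_P = 0$ is equivalent to the sub-pair $(X,\Delta + f^*P)$ being sub-log canonical at the generic point of every $E_i$. Computing the coefficient at the generic point of $E_i$ yields $m_i + \mult_{E_i}(\Delta) \leq 1$; since $m_i \geq 1$ and $\Delta \geq 0$, this forces $m_i = 1$ and $E_i \not\subset \Supp(\Delta)$ for every $i$. In particular, the scheme-theoretic fiber $f^{-1}(P)$ is reduced and no vertical component of $\Delta$ sits over $P$.

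Next, I pass to the strict henselization $R \coloneqq \O Y,P.^{\mathrm{sh}}$, with algebraically closed residue field $k$, and set $X_R \coloneqq X \times_Y \Spec R$. The morphism $X_R \to \Spec R$ is proper and flat, with smooth genus-one generic fiber and reduced special fiber $f^{-1}(P)$. Any section of $X_R \to \Spec R$ descends, by the standard direct-limit argument, to a section of $f$ over some étale neighborhood of $P$, so it suffices to produce an $R$-point. Since $X$ is a klt surface, its singular locus is finite, and since $f^{-1}(P)$ is a reduced curve of arithmetic genus one, its smooth locus is dense in each component; hence I can choose a closed point $x \in f^{-1}(P)$ that is simultaneously a regular point of $X$ and a smooth point of the fiber. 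At such an $x$ the Jacobian criterion shows that $f$ is smooth, and therefore $X_R \to \Spec R$ is smooth at the corresponding $k$-point. Because $R$ is strictly henselian with algebraically closed residue field, Hensel's lemma for smooth morphisms produces a section of $X_R \to \Spec R$ through $x$, which is exactly what is needed.

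The step I expect to demand the most care is the first: correctly reading off $b_P = 0$ from the canonical bundle formula when $\Delta$ genuinely has a boundary, since in principle horizontal components of $\Delta$ could also contribute at the codimension-one point $P$. Fortunately only the inequality at the generic points of the vertical components is required to extract reducedness of the fiber, and reducedness is all that the henselian smoothness argument then consumes; everything else reduces to standard facts about smooth morphisms over strictly henselian bases.
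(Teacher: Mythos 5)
Your argument is correct, but it takes a genuinely different route from the one in the paper. The paper replaces $X$ by its minimal resolution $X'$, notes that $f(\Supp(\Delta)) \subset \Supp(B_Y)$ so that $X' \rar Y$ is a relatively minimal elliptic fibration in Kodaira's sense over $Y \setminus \Supp(B_Y)$, and then invokes Kodaira's classification of singular fibers to conclude that all fibers there are reduced (indeed of type $I_n$), from which \'etale local sections follow by standard elliptic surface theory. You instead stay on $(X,\Delta)$ itself, deduce reducedness of $f^{-1}(P)$ directly from the coefficient computation in the canonical bundle formula, and manufacture the section by hand via Hensel's lemma over the strict henselization, after locating a point where $f$ is smooth. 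Your route avoids the surface-specific classification, is more self-contained, and would generalize more readily to higher-dimensional bases; the paper's is terser but leans on the classical theory. (A small simplification you could have used: $\Delta$ is automatically vertical here, since $(K_X+\Delta)\cdot F = 0 = K_X \cdot F$ for a smooth genus-one fiber $F$, so the worry about horizontal components never arises.)

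One imprecision worth fixing: you assert that $b_P = 0$ \emph{is equivalent to} the sub-pair $(X,\Delta+f^*P)$ being sub-log canonical at the generic point of every $E_i$. That is only a necessary condition. Indeed $b_P = 1 - t_P$ where $t_P$ is the log canonical threshold of $f^*P$ with respect to $(X,\Delta)$ over $P$, and this threshold also sees valuations of higher codimension centered inside $f^{-1}(P)$. A concrete counterexample to your equivalence: with $X$ smooth and $\Delta=0$, a cuspidal-cubic fiber of Kodaira type $II$ has a single component $E_1$ with $m_1 = 1$ and $E_1 \not\subset \Supp(\Delta)$, yet $b_P = 1/6 \neq 0$ because the log canonical threshold at the cusp is $5/6$. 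Fortunately you only invoke the forward direction — that $b_P = 0$ forces $a_{E_i}(X,\Delta+f^*P) \geq 0$ for each $i$, hence $m_i = 1$ and $\mult_{E_i}(\Delta) = 0$ — and that implication is correct, so the rest of the proof is unaffected. Replacing ``is equivalent to'' with ``implies in particular'' would make the write-up accurate.
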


\begin{proof}
Let $X'$ be the minimal resolution of $X$.
Notice that, if $X' \rar Y$ admits a section \'etale locally, then so does $X \rar Y$ by composing with the morphism $X' \rar X$.
Thus, we may assume that $X$ is smooth.
Since there is an inclusion $f(\Supp(\Delta)) \subset \Supp(B_Y)$, the fibration is a minimal elliptic fibration in the sense of Kodaira over $Y \setminus \Supp(B_Y)$.
Therefore, all the fibers are reduced over $Y \setminus \Supp(B_Y)$.
Then, the claim follows.
\end{proof}

\begin{proof}[Proof of Theorem \ref{main thm}]
We proceed by induction on $n$.
Proposition \ref{prop for surfaces} implies that an analog of Proposition \ref{bound cycles} holds true if $n=2$.
This will guarantee that the base case $n=2$ can be proved following the steps below.
Since the strategy below applies with no changes to the base case $n=2$ and to the inductive step, we do not make mention of the induction in the rest of the proof.
Our strategy follows the one in the proof of \cite{Gro94}*{Theorem 4.3}.
We proceed in several steps.
\newline
{\bf Step 1:} {\it In this step we consider the family of Jacobian fibration guaranteed by Theorem \ref{bdd jacobian} and construct some closed subsets on this family.}
\newline
Let $\mathcal{J} \rar (\mathcal{Y},\mathcal{F}) \rar U$ be the family of fibrations constructed in Theorem \ref{bdd jacobian}.
Up to a stratification of $U$, we may assume that $U$ is smooth.
Similarly, we may assume that the closed subset of $\mathcal{Y}$ where the morphism $\mathcal{J} \rar \mathcal{Y}$ is not smooth does not contain any fiber of $\mathcal{Y} \rar U$.
Let $\mathcal{Z} \subset \mathcal{Y}$ denote a closed subset so that $\mathcal{J} \rar \mathcal{Y}$ is a smooth fibration over $\mathcal{Y} \setminus \mathcal{Z}$.
Up to enlarging $\mathcal{Z}$, by Remark \ref{rmk section}, we may assume that $\mathcal{J} \rar \mathcal{Y}$ has a regular section over $\mathcal{Y} \setminus \mathcal{Z}$.
Up to a further stratification and Noetherian induction, we may assume that $\mathcal{Z}$ does not contain any fiber of $\mathcal{Y} \rar U$.
\newline
{\bf Step 2:} {\it In this step we show that, after shrinking the family according to the closed subsets constructed in Step 1, the fibrations of interest are parametrized by the Tate--Shafarevich group.}
\newline
Define $\mathring{\mathcal{Y}} \coloneqq \mathcal{Y} \setminus (\mathcal{F} \cup \mathcal{Z})$ and $\mathring{\mathcal{J}} \coloneqq \mathcal{J} \times_\mathcal{Y} \mathring{\mathcal{Y}}$.
For an element $J(X) \rar (Y',B_{Y'}+\Delta_{Y'}+D') \in \mathfrak{P}(n,v,\Phi(\mathcal R),d)$ corresponding to the fiber over $u \in U$, we denote by $\mathring{Y} \subset Y'$ the open subset induced by $\mathring{\mathcal{Y}}$.
By Lemma \ref{lemma supports}, we may identify $\mathring{Y}$ with an open subset of $Y$ as well.
Thus, we may write $J(\mathring{X}) \coloneqq J(X) \times_Y \mathring{Y}$ and $\mathring{X}\coloneqq X \times_Y \mathring{Y}$.
Then, by construction and Proposition \ref{bound cycles}, $X \rar Y$ is smooth over $\mathring{Y}$, except possibly at finitely many isolated points.
Call these points $P_1,\ldots,P_k$.
Furthermore, the fibration $J(X) \rar Y'$ is smooth over $\mathring{Y}$ and it admits a section.
By construction, we know that $X \rar Y$ corresponds to an element of $\Sh_{\mathring{Y} \setminus \lbrace P_1,\ldots,P_k \rbrace}(J(X)_{\eta_Y})$.
Then, by Proposition \ref{lemma tecnico DG}, it actually belongs to $\Sh_{\mathring{Y}}(J(X)_{\eta_Y})$.
Notice that, by Lemma \ref{lemma order WC}, this element has order dividing $d!$ in $\Sh_{\mathring{Y}}(J(X)_{\eta_Y})$.
\newline
{\bf Step 3:} {\it In this step we introduce a log resolution of $\mathcal{J} \rar (\mathcal{Y},\mathcal{F} \cup \mathcal{Z}) \rar U$.}
\newline
Up to a further stratification of $U$, we may assume that $\mathcal{Y}$ and $\mathcal{J}$ admit log resolutions $\mathcal{Y}'$ and $\mathcal{J}'$ so that
\begin{itemize}
    \item $\mathcal{J}' \rar \mathcal{J}$ and $\mathcal{Y}' \rar \mathcal{Y}$ are isomorphisms over $\mathring{\mathcal{J}}$ and $\mathring{\mathcal{Y}}$, respectively;
    \item the induced rational map $\mathcal{J}' \drar \mathcal{Y}'$ is an actual morphism which admits a section;
    \item the complement of $\mathring{\mathcal{Y}}$ in $\mathcal{Y}'$ is simple normal crossing over $U$; and
    \item $\mathcal{J}' \rar U$ and $\mathcal{Y'} \rar U$ are smooth.
\end{itemize}
Notice that, by construction, the ramification locus of $\mathcal{J}' \rar \mathcal{Y}'$ is contained in the complement of $\mathring{Y}$.
\newline
{\bf Step 4:} {\it In this step we show that the set of fibrations (without the boundary divisors) we are considering is birationally bounded.
For this purpose, we will refer to several statements in \cite{Gro94}*{\S~4}.
While the results in \cite{Gro94}*{\S~4} are stated for families of elliptic threefolds, we will make use only of the ones that hold without any restriction on the dimension.
One of the needed statements, namely \cite{Gro94}*{Lemma 4.7}, holds in full generality only in dimension 3.
Yet, for this specific statement, we only need the conclusion of the first half of its proof, which holds true with no restriction on the dimension (specifically, we only need the statement when the ramification divisor is removed, i.e., $(\mathcal{X}',\mathcal{S}',\mathcal{T})$ in the notation of \cite{Gro94}*{proof of Lemma 4.7}).}
\newline
By abusing notation, we proceed as if $U$ were irreducible: to be precise, the following steps should be performed on each irreducible component of the family of fibrations.
Now, let $\eta_{\mathcal{Y}'}$ denote the generic point of $\mathcal{Y}'$, and let $\iota \colon \eta_{\mathcal{Y}'} \rar \mathcal{Y}'$ be the natural inclusion.
We regard $\iota_* \mathcal{J}'_{\eta_{\mathcal{Y}'}}$ as a sheaf in the \'etale topology of $\mathcal{Y}'$.
Similarly, its (derived) push-forwards and pull-backs will be understood in the \'etale site.
We write $\pi \colon \mathcal{Y}' \rar U$ and $\mathring{\pi} \colon \mathring{\mathcal{Y}} \rar U$.
Then, by \cite{Gro94}*{Lemma 4.4}, $_{d!} R^1\mathring{\pi}_*(\iota_*\mathcal{J}'_{\eta_{\mathcal{Y}'}})$, the $d!$-torsion subsheaf of $R^1\mathring{\pi}_*(\iota_*\mathcal{J}'_{\eta_{\mathcal{Y}'}})$, is a constructible sheaf on $U$.
By \cite{Mil80}*{Proposition V.1.8}, there exists an open subset $V \subset U$ such that $_{d!} R^1\mathring{\pi}_*(\iota_*\mathcal{J}'_{\eta_{\mathcal{Y}'}})|_V$ is locally constant with finite stalks.
Thus, there is an \'etale cover $V' \rar V$ such that $_{d!} R^1\mathring{\pi}_*(\iota_*\mathcal{J}'_{\eta_{\mathcal{Y}'}})|_{V'}$ is a constant sheaf with stalks isomorphic to a finite group $G$.
For every $v \in V'$ and $g \in G$, there exists an \'etale neighborhood $V_{v,g} \rar V'$ of $v$ such that the element $g$ of the stalk $(_{d!} R^1\mathring{\pi}_*(\iota_*\mathcal{J}'_{\eta_{\mathcal{Y}'}})|_{V'})_{\overline{v}} =G$ is represented by an element of $E_{v,g} \in H^1(V_{v,g} \times_U \mathring{\mathcal{Y}},\iota_*\mathcal{J}'_{\eta_{\mathcal{Y}'}})$.
Fix $g \in G$.
Then, by quasi-compactness, we may find a finite subcover of $\lbrace V_{v,g}|v \in V' \rbrace$ that covers $V'$.
Thus, we produce a collection of conncted \'etale schemes $V_1,\ldots,V_k$ over $V'$ and elements $E_j \in H^1(V_j \times_U \mathring{\mathcal{Y}},\iota_*\mathcal{J}'_{\eta_{\mathcal{Y}'}})$ such that, for all closed points $v \in V$, restricting each $E_j$ to $\mathring{\mathcal{Y}}_v$ gives all elements of $_{d!} \Sh_{\mathring{\mathcal{Y}}_v}((\mathcal{J}'_v)_{\eta_{\mathring{\mathcal{Y}}_v}})$, by \cite{Gro94}*{Lemma 4.7}.
By Noetherian induction, we can repeat the above argument over $U \setminus V$.
Thus, arguing as in the proof of \cite{Gro94}*{Theorem 4.3}, we obtain a family (over a possibly disconnected base) $\tilde{\mathcal{X}} \rar \tilde{\mathcal{Y}} \rar T$ such that, for every $u \in U$ and every element of $_{d!} \Sh_{\mathring{\mathcal{Y}}_u}((\mathcal{J}_u)_{\eta_{\mathring{\mathcal{Y}}_u}})$, there is a closed point $t \in T$ such that the fibration $\tilde{\mathcal{X}}_t \rar \tilde{\mathcal{Y}}_t$ corresponds to such an element.
Furthermore, we have that $\tilde{\mathcal{X}}$ and $\tilde{\mathcal{X}} \rar T$ are smooth.
Thus, we conclude that the varieties underlying the pairs in $\mathfrak{C}(n,v,\Phi(\mathcal R),d)$ are birationally bounded.
\newline
{\bf Step 5:} {\it In this step we retrieve the birational boundedness of the divisors corresponding to the elements of $\mathfrak{C}(n,v,\Phi(\mathcal R),d)$.}
\newline
In Step 4, the variety $\tilde{\mathcal{Y}}$ together with the morphism $\tilde{\mathcal{Y}} \rar T$ were constructed by iterated base changes and stratifications of the original base $U$.
In particular, the simple normal crossing divisor $\mathcal{Y}' \setminus \mathring{Y}$ pulls back to $\tilde{\mathcal{Y}}$.
Call this divisor $\Sigma$, and let $\Xi$ be its pull-back to $\tilde{\mathcal{X}}$.
Now, recall that $\mathcal{Y}' \rar \mathcal{Y}$ is an isomorphism over $\mathring{\mathcal{Y}}$.
Let $\tilde{\mathcal{X}}_t$ be a closed fiber corresponding to the fibration $(X,\Delta) \rar (Y,B_Y+\Delta_Y)$, and let $\K \tilde{\mathcal{X}}_t. + \tilde{\Delta}_t$ denote the crepant pull-back of $\K X. + \Delta$ to $\tilde{\mathcal{X}}_t$.
Notice that, while $\tilde{\mathcal{X}}_t \drar X$ may not be a morphism, the pull-back is crepant, since $\K X. + \Delta \sim \subs \qq,Y. 0$.
Let $\Sigma_t$ denote the restriction of $\Sigma$ to $\tilde{\mathcal{Y}}_t$.
Then, every prime component of $\tilde{\Delta}_t$ or of the exceptional locus of $\tilde{\mathcal{X}}_t \drar X$ that maps to a divisor on $\tilde{\mathcal{Y}}_t$ maps to $\Sigma_t$.
Thus, every other component of $\tilde{\Delta}_t$ or of the exceptional locus of $\tilde{\mathcal{X}}_t \drar X$ is exceptional for $\tilde{\mathcal{X}}_t \rar \tilde{\mathcal{Y}}_t$ in the sense of \cite{Lai11}*{Definition 2.9}.
In order to conclude this step, it suffices to show the boundedness of these divisors.
\newline
Now, define the closed subset $\mathcal{V} \subset \tilde{\mathcal{Y}}$ as the set of points $y$ so that $\dim(\tilde{\mathcal{X}}_y) > 1$.
Let $\mathcal{W}$ be the inverse image of $\mathcal{V}$ in $\tilde{\mathcal{X}}$ with the reduced structure.
Now, we may stratify $T$ so that $\mathcal{W}$ is flat over $T$.
Thus, the divisors that are exceptional for the morphism $\tilde{\mathcal{X}}_t \rar \tilde{\mathcal{Y}}_t$ for some $t \in T$ are bounded, as they deform in the divisorial part of $\mathcal{W}$.
Call $\Omega$ the reduced divisor obtained as the union of $\Xi$ and the divisorial part of $\mathcal{W}$.
This divisor bounds the strict transform of the support of $\Delta$ and the exceptional divisor of $\tilde{\mathcal{X}}_t \drar X$ as required.
\newline
{\bf Step 6:} {\it In this step we perform some suitable modifications to the birationally bounding family in order to prepare it for the run of a suitable MMP.}
\newline
In Step 5, we achieved the birational boundedness of the fibrations of interest.
In order to retrieve (up to codimension 1 indeterminacy) the fibrations $(X,\Delta) \rar Y$ in $\mathfrak{C}(n,v,\Phi(R),d)$, we will use that the pairs $(X,\Delta)$ are good minimal models.
Thus, the ultimate goal of the proof is to replace the family $(\tilde{\mathcal{X}},\Omega) \rar T$ constructed in Step 5 with a new family where each fiber is a good minimal model of a fiber of $(\tilde{\mathcal{X}},\Omega) \rar T$.
This final replacement will be performed in Step 7.
In this step, we will improve the family constructed in Step 5 to have the additional properties needed to run the MMP in families.
In particular, two key properties are needed.
First, we need that, if $(\tilde{\mathcal{X}}_t,\Omega_t)$ corresponds to $(X,\Delta)$, the rational map $\tilde{\mathcal{X}}_t \drar X$ is a rational contraction.
To achieve this, we will use the fact that $(X,\Delta)$ is $\epsilon$-log canonical for a uniform $\epsilon>0$, and thus this property is guaranteed up to replacing $(\tilde{\mathcal{X}}_t,\Omega_t)$ with the terminalization of $(\tilde{\mathcal{X}}_t,\left(1-\frac{\epsilon}{2} \right)\Omega_t)$.
Secondly, in order to apply \cite{HMX18b}*{Theorem 1.9.1} in Step 7, we need for this replacement to occur on the whole family, and for the new birational model of $(\tilde{\mathcal{X}},\Omega) \rar T$ to be log smooth over $T$.
To this end, we will consider log resolutions and terminalizations of the total space $\tilde{\mathcal{X}}$, and we will recur to stratifications of $T$ and Noetherian induction to guarantee that resolutions and terminalizations of the total space restrict to resolutions and terminalizations fiberwise.

Up to a further resolution, we may assume that $\Omega$ is simple normal crossing.
In doing so, we include in $\Omega$ the divisors extracted in this process.
By Corollary \ref{cor bound sings}, each $(X,\Delta)$ is $\epsilon$-log canonical for a fixed $\epsilon > 0$.
Thus, we may replace $(\tilde{\mathcal{X}},\Omega)$ with a log resolution of the terminalization of $(\tilde{\mathcal{X}},(1-\frac{\epsilon}{2})\Omega)$.
Furthermore, up to a stratification of $T$, we may assume that this process induces a terminalization fiberwise.
By abusing notation, we call $(X,\Omega)$ the outcome of this process.
This process guarantees that no divisor is contracted by the birational map $X \drar \tilde{\mathcal{X}}_t$.
Then, up to a stratification, we may assume that every stratum of $\Omega$ is smooth over $T$.
Finally, by \cite{Kol13}*{Claim 4.38.1}, we may perform an \'etale base change of a stratification of $T$ so that every prime component of $\Omega$ restrict to a prime divisor fiberwise.
This reduction is needed in order to be able to assign coefficients to the divisors $\Omega_t$ with a choice of coefficients for $\Omega$.
\newline
{\bf Step 7:} {\it In this step we conclude the proof.}
\newline
Fix $(X,\Delta) \rar Y$ in $\mathfrak{C}(n,v,\Phi(\mathcal R),d)$, and let $\tilde{\mathcal{X}}_t$ be the corresponding closed fiber of $\tilde{\mathcal{X}} \rar T$.
Then, the strict transform of $\Delta$ is supported on $\Omega_t$.
Let $\Gamma_t$ be the divisor supported on $\Omega_t$ obtained as follows: it is the sum of the strict transform of $\Delta$ and the divisors that are exceptional for $\tilde{\mathcal{X}}_t \drar X$, the latter having coefficients $1-\frac{\epsilon}{2}$.
Then, by construction, $(X,\Delta)$ is a good minimal model for $(\tilde{\mathcal{X}}_t,\Gamma_t)$.
Now, let $(\tilde{\mathcal{X}},\Gamma)$ be the pair supported on $\tilde{\mathcal{X}}$ whose restriction to $\tilde{\mathcal{X}}_t$ is $(\tilde{\mathcal{X}}_t,\Gamma_t)$ (notice that this process may actually involve just one of the finitely many connected components of $\tilde{\mathcal{X}}$).
Then, by \cite{HMX18b}*{Theorem 1.9.1}, $(\tilde{\mathcal{X}},\Gamma)$ admits a relative good minimal model over $T$ that induces a good minimal model fiberwise.
Furthermore, the relative ample model induces the ample model fiberwise.
Call $(\hat{\mathcal{X}},\hat{\Gamma})$ the minimal model, and let $\hat{\mathcal{Y}}$ be the relative ample model.
Then, by \cite{HX13}*{Lemma 2.4},
$(\hat{\mathcal{X}}_t,\hat{\Gamma}_t)$ is isomorphic in codimension one to $(X,\Delta)$, and $\hat{\mathcal{Y}}_t$ is isomorphic to $Y$.
\newline
Now, notice that $\tilde{\mathcal{X}} \rar T$ is a morphism of schemes of finite type.
In particular, they have finitely many irreducible components.
Furthermore, $\Omega$ has finitely many irreducible components.
Finally, the possible coefficients involved in the construction of $\Gamma_t$ as above are finite.
Indeed, $\Phi(\mathcal{R}) \cap [0,1-\epsilon]$ is a finite set.
Thus, up to performing all the possible combinations of choices involving irreducible components of $\tilde{\mathcal{X}}$ and all the possible coefficients to assign to $\Omega$, the above process retrieves every $(X,\Delta)$ up to isomorphism in codimension one.
This implies that there exist schemes of finite type $\overline{\mathcal{X}}$, $\overline{\mathcal{Y}}$ and $S$, a reduced divisor $\overline{\mathcal{B}}$ on $\overline{\mathcal{X}}$, and projective morphisms $(\overline{\mathcal{X}},\overline{\mathcal{B}}) \rar \overline{\mathcal{Y}} \rar S$ that bound in codimension one the pairs $(X,\Delta)$ and bound the varieties $Y$.
In order to conclude, we need to show that this family also bounds the divisors arising from the canonical bundle formula applied to any $(X,\Delta) \rar Y$.
Up to a stratification of $S$, this can be achieved by Corollary \ref{stratify cbf}.
This concludes the proof.
\end{proof}

\begin{proof}[Proof of Theorem \ref{thm intro}]
It is immediate from Theorem \ref{theorem fano case} and Theorem \ref{main thm}.
\end{proof}

\begin{bibdiv}
\begin{biblist}

\bib{Amb99}{misc}{
  author ={Ambro, F.},
  title={The adjunction conjecture and its applications},
  year = {1999},
  note = {https://arxiv.org/abs/math/9903060},
}

\bib{BCHM}{article}{
   author={Birkar, C.},
   author={Cascini, P.},
   author={Hacon, C. D.},
   author={M\textsuperscript{c}Kernan, J.},
   title={Existence of minimal models for varieties of log general type},
   journal={J. Amer. Math. Soc.},
   volume={23},
   date={2010},
   number={2},
   pages={405--468},
   issn={0894-0347},
   review={\MR{2601039}},
   doi={10.1090/S0894-0347-09-00649-3},
}

\bib{BDCS}{misc}{
  author ={Birkar, C.},
  author={Di Cerbo, G.},
   author={Svaldi, R.},
  title={Boundedness of elliptic {C}alabi--{Y}au varieties with a rational section},
  year = {2020},
  note = {https://arxiv.org/abs/2010.09769},
}

\bib{Bir18}{misc}{
  author ={Birkar, C.},
  title={Log Calabi--Yau fibrations},
  year = {2018},
  note = {https://arxiv.org/abs/1811.10709},
}

\bib{Bir19}{article}{
   author={Birkar, C.},
   title={Anti-pluricanonical systems on Fano varieties},
   journal={Ann. of Math. (2)},
   volume={190},
   date={2019},
   number={2},
   pages={345--463},
   issn={0003-486X},
   review={\MR{3997127}},
   doi={10.4007/annals.2019.190.2.1},
}

\bib{Bir16}{article}{
   author={Birkar, C.},
   title={Singularities of linear systems and boundedness of Fano varieties},
   journal={Ann. of Math. (2)},
   volume={193},
   date={2021},
   number={2},
   pages={347--405},
   issn={0003-486X},
   review={\MR{4224714}},
   doi={10.4007/annals.2021.193.2.1},
}

\bib{BZ16}{article}{
   author={Birkar, C.},
   author={Zhang, D.-Q.},
   title={Effectivity of Iitaka fibrations and pluricanonical systems of
   polarized pairs},
   journal={Publ. Math. Inst. Hautes \'Etudes Sci.},
   volume={123},
   date={2016},
   pages={283--331},
   issn={0073-8301},
   review={\MR{3502099}},
}

\bib{dCS18}{article}{
   author={Chen, W.},
   author={Di Cerbo, G.},
   author={Han, J.},
   author={Jiang, C.},
   author={Svaldi, R.},
   title={Birational boundedness of rationally connected {C}alabi--{Y}au 3-folds},
   journal={Adv. Math.},
   volume={378},
   date={2021},
   pages={Paper No. 107541, 32},
   issn={0001-8708},
   review={\MR{4191257}},
   doi={10.1016/j.aim.2020.107541},
}

\bib{dCS17}{article}{
   author={Di Cerbo, G.},
   author={Svaldi, R.},
   title={Birational boundedness of low-dimensional elliptic {C}alabi--{Y}au
   varieties with a section},
   journal={Compos. Math.},
   volume={157},
   date={2021},
   number={8},
   pages={1766--1806},
   issn={0010-437X},
   review={\MR{4292177}},
   doi={10.1112/S0010437X2100717X},
}

\bib{DG94}{article}{
   author={Dolgachev, I.},
   author={Gross, M.},
   title={Elliptic threefolds. I. Ogg-Shafarevich theory},
   journal={J. Algebraic Geom.},
   volume={3},
   date={1994},
   number={1},
   pages={39--80},
   issn={1056-3911},
   review={\MR{1242006}},
}

\bib{FG14}{article}{
   author={Fujino, O.},
   author={Gongyo, Y.},
   title={On the moduli b-divisors of lc-trivial fibrations},
   language={English, with English and French summaries},
   journal={Ann. Inst. Fourier (Grenoble)},
   volume={64},
   date={2014},
   number={4},
   pages={1721--1735},
   issn={0373-0956},
   review={\MR{3329677}},
}

\bib{FHS}{misc}{
  author ={Filipazzi, S.},
  author={Hacon, C. D.},
   author={Svaldi, R.},
  title={Boundedness of elliptic {C}alabi--{Y}au threefolds},
  year = {2021},
  note = {https://arxiv.org/abs/2112.01352},
}

\bib{Fil19}{article}{
   author={Filipazzi, S.},
   title={Some remarks on the volume of log varieties},
   journal={Proc. Edinb. Math. Soc. (2)},
   volume={63},
   date={2020},
   number={2},
   pages={314--322},
   issn={0013-0915},
   review={\MR{4089377}},
   doi={10.1017/s0013091519000397},
}

\bib{Flo14}{article}{
   author={Floris, E.},
   title={Inductive approach to effective b-semiampleness},
   journal={Int. Math. Res. Not. IMRN},
   date={2014},
   number={6},
   pages={1465--1492},
   issn={1073-7928},
   review={\MR{3180598}},
   doi={10.1093/imrn/rns260},
}

\bib{FM00}{article}{
   author={Fujino, O.},
   author={Mori, S.},
   title={A canonical bundle formula},
   journal={J. Differential Geom.},
   volume={56},
   date={2000},
   number={1},
   pages={167--188},
   issn={0022-040X},
   review={\MR{1863025}},
}

\bib{FS19}{article}{
   author={Filipazzi, S.},
   author={Svaldi, R.},
   title={Invariance of plurigenera and boundedness for generalized pairs},
   journal={Mat. Contemp.},
   volume={47},
   date={2020},
   pages={114--150},
   issn={0103-9059},
   review={\MR{4191137}},
}

\bib{FS22}{misc}{
  author ={Filipazzi, S.},
  author ={Spicer, C.},
  title={On semi-ampleness of the moduli part},
  year = {2022},
  note = {https://arxiv.org/abs/2212.03736},
}

\bib{Fuj11}{article}{
   author={Fujino, O.},
   title={Semi-stable minimal model program for varieties with trivial
   canonical divisor},
   journal={Proc. Japan Acad. Ser. A Math. Sci.},
   volume={87},
   date={2011},
   number={3},
   pages={25--30},
   issn={0386-2194},
   review={\MR{2802603}},
}

\bib{Fuj14}{article}{
   author={Fujino, O.},
   title={Fundamental theorems for semi log canonical pairs},
   journal={Algebr. Geom.},
   volume={1},
   date={2014},
   number={2},
   pages={194--228},
   review={\MR{3238112}},
   doi={10.14231/AG-2014-011},
}

\bib{Gro94}{article}{
   author={Gross, M.},
   title={A finiteness theorem for elliptic Calabi-Yau threefolds},
   journal={Duke Math. J.},
   volume={74},
   date={1994},
   number={2},
   pages={271--299},
   issn={0012-7094},
   review={\MR{1272978}},
   doi={10.1215/S0012-7094-94-07414-0},
}

\bib{GW19}{article}{
   author={Grassi, A.},
   author={Wen, D.},
   title={Higher dimensional elliptic fibrations and {Z}ariski decompositions},
   journal={Commun. Contemp. Math.},
   volume={24},
   date={2022},
   number={4},
   pages={Paper No. 2150024, 25},
   issn={0219-1997},
   review={\MR{4414166}},
   doi={10.1142/S0219199721500243},
}

\bib{Har77}{book}{
   author={Hartshorne, R.},
   title={Algebraic geometry},
   note={Graduate Texts in Mathematics, No. 52},
   publisher={Springer-Verlag, New York-Heidelberg},
   date={1977},
   pages={xvi+496},
   isbn={0-387-90244-9},
   review={\MR{0463157}},
}

\bib{HJ}{misc}{
  author ={Han, J.},
  author={Jiang, C.},
  title={Birational boundedness of rationally connected log {C}alabi--{Y}au pairs with fixed index},
  year = {2022},
  note = {https://arxiv.org/abs/2204.04946},
}

\bib{HMX14}{article}{
   author={Hacon, C. D.},
   author={M\textsuperscript{c}Kernan, J.},
   author={Xu, C.},
   title={ACC for log canonical thresholds},
   journal={Ann. of Math. (2)},
   volume={180},
   date={2014},
   number={2},
   pages={523--571},
   issn={0003-486X},
   review={\MR{3224718}},
   doi={10.4007/annals.2014.180.2.3},
}

\bib{HMX18}{article}{
   author={Hacon, C. D.},
   author={M\textsuperscript{c}Kernan, J.},
   author={Xu, C.},
   title={Boundedness of moduli of varieties of general type},
   journal={J. Eur. Math. Soc. (JEMS)},
   volume={20},
   date={2018},
   number={4},
   pages={865--901},
   issn={1435-9855},
   review={\MR{3779687}},
   doi={10.4171/JEMS/778},
}

\bib{HMX18b}{article}{
   author={Hacon, C. D.},
   author={M\textsuperscript{c}Kernan, J.},
   author={Xu, C.},
   title={Boundedness of varieties of log general type},
   conference={
      title={Algebraic geometry: Salt Lake City 2015},
   },
   book={
      series={Proc. Sympos. Pure Math.},
      volume={97},
      publisher={Amer. Math. Soc., Providence, RI},
   },
   date={2018},
   pages={309--348},
   review={\MR{3821154}},
}

\bib{HS20}{article}{
   author={Hao, F.},
   author={Schreieder, S.},
   title={Equality in the {B}ogomolov--{M}iyaoka--{Y}au inequality in the
   non-general type case},
   journal={J. Reine Angew. Math.},
   volume={775},
   date={2021},
   pages={87--115},
   issn={0075-4102},
   review={\MR{4265187}},
   doi={10.1515/crelle-2021-0008},
}

\bib{HX13}{article}{
   author={Hacon, C. D.},
   author={Xu, C.},
   title={Existence of log canonical closures},
   journal={Invent. Math.},
   volume={192},
   date={2013},
   number={1},
   pages={161--195},
   issn={0020-9910},
   review={\MR{3032329}},
   doi={10.1007/s00222-012-0409-0},
}

\bib{HX15}{article}{
   author={Hacon, C. D.},
   author={Xu, C.},
   title={Boundedness of log Calabi--Yau pairs of Fano type},
   journal={Math. Res. Lett.},
   volume={22},
   date={2015},
   number={6},
   pages={1699--1716},
   issn={1073-2780},
   review={\MR{3507257}},
   doi={10.4310/MRL.2015.v22.n6.a8},
}

\bib{Jiao1}{misc}{
  author ={Jiao, J.},
  title={On the boundedness of canonical models},
  year = {2021},
  note = {https://arxiv.org/abs/2103.13609},
}

\bib{Jiao2}{misc}{
  author ={Jiao, J.},
  title={Boundedness of polarised {C}alabi--{Y}au fibrations},
  year = {2022},
  note = {https://arxiv.org/abs/2202.07238},
}

\bib{Kaw07}{article}{
   author={Kawakita, M.},
   title={Inversion of adjunction on log canonicity},
   journal={Invent. Math.},
   volume={167},
   date={2007},
   number={1},
   pages={129--133},
   issn={0020-9910},
   review={\MR{2264806}},
   doi={10.1007/s00222-006-0008-z},
}

\bib{KM98}{book}{
   author={Koll\'{a}r, J.},
   author={Mori, S.},
   title={Birational geometry of algebraic varieties},
   series={Cambridge Tracts in Mathematics},
   volume={134},
   note={With the collaboration of C. H. Clemens and A. Corti;
   Translated from the 1998 Japanese original},
   publisher={Cambridge University Press, Cambridge},
   date={1998},
   pages={viii+254},
   isbn={0-521-63277-3},
   review={\MR{1658959}},
   doi={10.1017/CBO9780511662560},
}

\bib{KMM94}{article}{
   author={Keel, S.},
   author={Matsuki, K.},
   author={M\textsuperscript{c}Kernan, J.},
   title={Log abundance theorem for threefolds},
   journal={Duke Math. J.},
   volume={75},
   date={1994},
   number={1},
   pages={99--119},
   issn={0012-7094},
   review={\MR{1284817}},
   doi={10.1215/S0012-7094-94-07504-2},
}

\bib{Kol07}{article}{
   author={Koll\'{a}r, J.},
   title={Kodaira's canonical bundle formula and adjunction},
   conference={
      title={Flips for 3-folds and 4-folds},
   },
   book={
      series={Oxford Lecture Ser. Math. Appl.},
      volume={35},
      publisher={Oxford Univ. Press, Oxford},
   },
   date={2007},
   pages={134--162},
   review={\MR{2359346}},
   doi={10.1093/acprof:oso/9780198570615.003.0008},
}

\bib{Kol13}{book}{
   author={Koll\'{a}r, J.},
   title={Singularities of the minimal model program},
   series={Cambridge Tracts in Mathematics},
   volume={200},
   note={With a collaboration of S. Kov\'{a}cs},
   publisher={Cambridge University Press, Cambridge},
   date={2013},
   pages={x+370},
   isbn={978-1-107-03534-8},
   review={\MR{3057950}},
   doi={10.1017/CBO9781139547895},
}

\bib{Lai11}{article}{
   author={Lai, C.-J.},
   title={Varieties fibered by good minimal models},
   journal={Math. Ann.},
   volume={350},
   date={2011},
   number={3},
   pages={533--547},
   issn={0025-5831},
   review={\MR{2805635}},
   doi={10.1007/s00208-010-0574-7},
}

\bib{Li20}{misc}{
  author ={Li, Z.},
  title={Boundedness of the base varieties of certain fibrations},
  year = {2020},
  note = {https://arxiv.org/abs/2002.06565},
}

\bib{Mil80}{book}{
   author={Milne, J. S.},
   title={\'{E}tale cohomology},
   series={Princeton Mathematical Series},
   volume={33},
   publisher={Princeton University Press, Princeton, N.J.},
   date={1980},
   pages={xiii+323},
   isbn={0-691-08238-3},
   review={\MR{559531}},
}

\bib{Mil06}{book}{
   author={Milne, J. S.},
   title={Elliptic curves},
   publisher={BookSurge Publishers, Charleston, SC},
   date={2006},
   pages={viii+238},
   isbn={1-4196-5257-5},
   review={\MR{2267743}},
}

\bib{MST}{article}{
   author={Martinelli, D.},
   author={Schreieder, S.},
   author={Tasin, L.},
   title={On the number and boundedness of log minimal models of general
   type},
   language={English, with English and French summaries},
   journal={Ann. Sci. \'{E}c. Norm. Sup\'{e}r. (4)},
   volume={53},
   date={2020},
   number={5},
   pages={1183--1207},
   issn={0012-9593},
   review={\MR{4174853}},
   doi={10.24033/asens.2443},
}

\bib{PS09}{article}{
   author={Prokhorov, Yu. G.},
   author={Shokurov, V. V.},
   title={Towards the second main theorem on complements},
   journal={J. Algebraic Geom.},
   volume={18},
   date={2009},
   number={1},
   pages={151--199},
   issn={1056-3911},
   review={\MR{2448282}},
   doi={10.1090/S1056-3911-08-00498-0},
}

\bib{Sha65}{book}{
   author={\v{S}afarevi\v{c}, I. R.},
   author={Averbuh, B. G.},
   author={Va\u{\i}nberg, Ju. R.},
   author={\v{Z}i\v{z}\v{c}enko, A. B.},
   author={Manin, Ju. I.},
   author={Mo\u{\i}\v{s}ezon, B. G.},
   author={Tjurina, G. N.},
   author={Tjurin, A. N.},
   title={Algebraic surfaces},
   series={By the members of the seminar of I. R. \v{S}afarevi\v{c}. Translated from
   the Russian by Susan Walker. Translation edited, with supplementary
   material, by K. Kodaira and D. C. Spencer. Proceedings of the Steklov
   Institute of Mathematics, No. 75 (1965)},
   publisher={American Mathematical Society, Providence, R.I.},
   date={1965},
   pages={ix+281},
   review={\MR{0215850}},
}

\bib{Sta}{misc}{    
    title={The Stacks project},    
    author={The {Stacks project authors}},    
    note={\url{https://stacks.math.columbia.edu}},    
    year         = {2020},  
}

\bib{wilson}{article}{
   author={Wilson, P. M. H.},
   title={The behaviour of the plurigenera of surfaces under algebraic
   smooth deformations},
   journal={Invent. Math.},
   volume={47},
   date={1978},
   number={3},
   pages={289--299},
   issn={0020-9910},
   review={\MR{501373}},
   doi={10.1007/BF01579215},
}

\end{biblist}
\end{bibdiv}

\end{document}